\documentclass[12pt,a4paper,reqno]{amsart}
\usepackage[a4paper,left=30mm,right=30mm,top=36mm,bottom=36mm]{geometry}

\sloppy

\usepackage{color}
\usepackage{amssymb}
\usepackage{amsmath}
\usepackage{graphicx}
\usepackage{float}
\usepackage[utf8]{inputenc}
\usepackage[english]{babel}
\usepackage{stmaryrd}
\usepackage{mathtools}
\usepackage{subcaption}
\usepackage{pgfplots}
\usepackage{grffile}
\pgfplotsset{compat=1.18}

\newcommand{\R}{\mathbb{R}}

\newcommand{\N}{\mathbb{N}}

\newcommand{\eps}{\varepsilon}
\newcommand{\fhi}{\varphi}
\newcommand{\weak}{\rightharpoonup}

\newcommand{\vertiii}[1]{{\left\vert\kern-0.25ex\left\vert\kern-0.25ex\left\vert #1
    \right\vert\kern-0.25ex\right\vert\kern-0.25ex\right\vert}}

\def\calA{\mathcal{A}}
\def\calB{\mathcal{B}}

\def\calO{\mathcal{O}}

\def\calT{\mathcal{T}}

\def\XXint#1#2#3{{\setbox0=\hbox{$#1{#2#3}{\int}$}
     \vcenter{\hbox{$#2#3$}}\kern-.5\wd0}}

\numberwithin{equation}{section}

\usepackage[]{hyperref}
\usepackage[noabbrev, capitalise, nameinlink]{cleveref}
\crefname{equation}{}{}
\crefname{assumption}{Assumption}{Assumptions}
\crefname{rmrk}{Remark}{Remarks}
\crefformat{equation}{\textup{#2(#1)#3}}

\usepackage[colorinlistoftodos,prependcaption]{todonotes}

\newtheorem{remark}{Remark}[section]    
\newtheorem{theorem}{Theorem}[section]    
   
\newtheorem{lemma}{Lemma}[section]	
\newtheorem{corollary}{Corollary}[section]  
\newtheorem{proposition}{Proposition}[section]

\begin{document}

\title[FEM for stationary FPK equations and numerical homogenization]{Finite element approximation of stationary Fokker--Planck--Kolmogorov equations with application to periodic numerical homogenization}

\author[T. Sprekeler]{Timo Sprekeler}
\address[T. Sprekeler]{Department of Mathematics, Texas A{\&}M University, College Station, TX 77843, USA.}
\email{timo.sprekeler@tamu.edu}

\author[E. S\"{u}li]{Endre S\"{u}li}
\address[E. S\"{u}li]{University of Oxford, Mathematical Institute, Woodstock Road, Oxford OX2 6GG, UK.}
\email{suli@maths.ox.ac.uk}

\author[Z. Zhang]{Zhiwen Zhang}
\address[Z. Zhang]{The University of Hong Kong, Department of Mathematics, Pokfulam Road, Hong Kong, China.}
\email{zhangzw@hku.hk}

\subjclass[2010]{35B27, 35J15, 65N12, 65N15, 65N30}
\keywords{Fokker--Planck--Kolmogorov equation, finite element methods, Cordes condition, homogenization, convergence analysis.}
\date{\today}

\begin{abstract}

We propose and rigorously analyze a finite element method for the approximation of stationary Fokker--Planck--Kolmogorov (FPK) equations subject to periodic boundary conditions in two settings: one with weakly differentiable coefficients, and one with merely essentially bounded measurable coefficients under a Cordes-type condition. These problems arise as governing equations for the invariant measure in the homogenization of nondivergence-form equations with large drifts. In particular, the Cordes setting guarantees the existence and uniqueness of a square-integrable invariant measure. We then suggest and rigorously analyze an approximation scheme for the effective diffusion matrix in both settings, based on the finite element scheme for stationary FPK problems developed in the first part. Finally, we demonstrate the performance of the methods through numerical experiments.

\end{abstract}

\maketitle

\section{Introduction}

In the first part of this paper, we consider the numerical approximation of the stationary Fokker--Planck--Kolmogorov-type problem
\begin{align}\label{FPK intro}
\begin{split}
-D^2:(Au) + \nabla\cdot (bu) := -\sum_{i,j=1}^n \partial_{ij}^2 (a_{ij}u) + \sum_{k=1}^n \partial_k(b_k u) = \nabla\cdot F \quad\text{in } Y,&\\ u\text{ is $Y$-periodic},&
\end{split}
\end{align}
where $Y:=(0,1)^n\subset \R^n$ denotes the unit cell, $F\in L^2_{\mathrm{per}}(Y;\R^n)$, and $(A,b)$ is a pair of given coefficients 
\begin{align*}
A = (a_{ij})_{1\leq i,j\leq n}\in L^{\infty}_{\mathrm{per}}(Y;\R^{n\times n}_{\mathrm{sym}}),\qquad b = (b_k)_{1\leq k\leq n}\in L^{\infty}_{\mathrm{per}}(Y;\R^n),
\end{align*}
where $A$ is assumed to be uniformly elliptic (see \eqref{uniform ell}). In addition, we make one of the following two assumptions:
\begin{itemize}
\item Setting $\calA$: we assume that $A\in W^{1,p}_{\mathrm{per}}(Y;\R^{n\times n}_{\mathrm{sym}})$ for some $p>n$.
\item Setting $\calB$: we assume that the coefficients satisfy the Cordes-type condition
\begin{align}\label{mod Cordes intro}
\exists\, \delta\in \left(\frac{n}{n+\pi^2},1\right]:\qquad\frac{\lvert A\rvert^2 + \lvert b\rvert^2}{(\mathrm{tr}(A))^2} \leq \frac{1}{n-1+\delta}\quad\text{a.e. in }\overline{Y},
\end{align}
which can be relaxed to the classical Cordes condition \eqref{classical Cordes intro} if $\lvert b \rvert = 0$ a.e..
\end{itemize}
Here, we used the notation $\lvert A\rvert^2 := A:A$ and $\lvert b\rvert^2 :=b\cdot b$. We call the condition \eqref{mod Cordes intro} a Cordes-type condition as it is inspired by the classical Cordes condition
\begin{align}\label{classical Cordes intro}
\exists\, \delta\in \left(0,1\right]:\qquad\frac{\lvert A\rvert^2}{(\mathrm{tr}(A))^2} \leq \frac{1}{n-1+\delta}\quad\text{a.e.}
\end{align}
used in the study of nondivergence-form equations $-A:D^2 v = f$, and the Cordes-type condition
\begin{align}\label{Cordes Abc intro}
\exists\, (\delta,\lambda)\in \left(0,1\right]\times (0,\infty):\qquad\frac{\lvert A\rvert^2 + \frac{1}{2\lambda}\lvert b\rvert^2 + \frac{1}{\lambda^2}c^2}{(\mathrm{tr}(A)+\frac{1}{\lambda}c)^2} \leq \frac{1}{n+\delta}\quad\text{a.e.}
\end{align} 
used in the study of nondivergence-form equations $-A:D^2 v - b\cdot \nabla v + c v = f$; see \cite{Cor56,SS13,SS14}. It is worth noting that \eqref{Cordes Abc intro} can never be satisfied when $c = 0$ since $\frac{\lvert M\rvert^2}{(\mathrm{tr}(M))^2} \geq \frac{1}{n}$ for any $M\in \R^{n\times n}$. 

Problems of the form \eqref{FPK intro} arise naturally as the governing equation for the invariant measure, i.e., the solution to
\begin{align}\label{r problem intro}
-D^2:(Ar) + \nabla\cdot (br) = 0\quad\text{in } Y,\qquad  r\text{ is $Y$-periodic},\qquad \int_Y r = 1,
\end{align}
which is used to determine the effective problem in the periodic homogenization of nondivergence-form equations with large drifts, i.e.,
\begin{align}\label{ueps problem}
\begin{split}
-A\left(\frac{\cdot}{\eps}\right) :D^2 u_\eps  - \frac{1}{\eps}\,b\left(\frac{\cdot}{\eps}\right)\cdot \nabla u_\eps &= f\quad\text{in }\Omega,\\  u_{\eps} &= g\quad\text{on }\partial\Omega,
\end{split}
\end{align}
which is the focus of the second part of this paper. In setting $\calA$, it is known that \eqref{r problem intro} has a unique positive H\"{o}lder continuous solution and that if the drift satisfies the centering condition $\int_Y rb = 0$ and $f,g,\Omega$ are sufficiently regular, then the sequence of solutions $(u_{\eps})_{\eps>0}$ to \eqref{ueps problem} converges weakly in $H^1(\Omega)$ as $\eps\rightarrow 0$ to the solution $\overline{u}$ of the effective problem
\begin{align*}
-\overline{A}:D^2 \overline{u} &= f\quad\text{in }\Omega,\\  \overline{u} &= g\quad\text{on }\partial\Omega,
\end{align*}
where the effective diffusion matrix $\overline{A}\in \R^{n\times n}$ is the symmetric positive definite matrix given by
\begin{align}\label{def Abar intro}
\overline{A} := \int_Y r[I_n+D\chi]A[I_n + (D\chi)^{\mathrm{T}}]
\end{align}
with $\chi := (\chi_1,\dots,\chi_n)$ and $\chi_j$ denoting the solution to 
\begin{align}\label{chij pro intro}
-A:D^2 \chi_j - b\cdot\nabla \chi_j = b_j\quad\text{in } Y,\qquad \chi_j\text{ is $Y$-periodic},\qquad \int_{Y} \chi_j = 0
\end{align}
for $1\leq j\leq n$; see, e.g., \cite{BLP11,ES08,JKO94}. The goal of the second part of this paper is the efficient and accurate numerical approximation of the effective diffusion matrix $\overline{A}$ from \eqref{def Abar intro} for both settings $\calA$ and $\calB$. 

In setting $\calA$, we approximate the solutions to \eqref{r problem intro} and \eqref{FPK intro} by rewriting the problems in divergence-form and adapting Schatz's method, see \cite{Sch74}, to the periodic setting to handle the resulting noncoercive variational form. Regarding the numerical approximation of \eqref{chij pro intro}, we multiply the equation by the approximation of the invariant measure $r$ and only then rewrite the problem in divergence-form in order to overcome the low regularity of solutions to the dual problem to \eqref{chij pro intro}. Our approximation of the effective diffusion matrix \eqref{def Abar intro} relies on a combination of the finite element schemes for $r$ and $\chi_j$.

In setting $\calB$, we show that \eqref{r problem intro} has a unique nonnegative solution $r\in L^2_{\mathrm{per}}(Y)$, and we suggest and rigorously analyze a finite element method for its approximation which is based on the observation that $r$ is of the form
\begin{align*}
r = C\frac{\mathrm{tr}(A)}{\lvert A\rvert^2 + \lvert b\rvert^2} (1-\nabla\cdot \rho),
\end{align*}  
where $C>0$ is a constant and $\rho$ is the unique solution of a Lax--Milgram-type problem in $H^1_{\mathrm{per},0}(Y;\R^n)$, i.e., the subspace of $H^1_{\mathrm{per}}(Y;\R^n)$ consisting of functions with mean zero. Further, we suggest and rigorously analyze a finite element method for the approximation of solutions to \eqref{FPK intro}, which is based on the observation that any solution $u\in L^2_{\mathrm{per}}(Y)$ to \eqref{FPK intro} is of the form
\begin{align*}
u = \frac{\mathrm{tr}(A)}{\lvert A\rvert^2 + \lvert b\rvert^2}(-\nabla\cdot \tilde{\rho}_0)+cr,
\end{align*}
where $c\in \R$ is a constant and $\tilde{\rho}_0$ is the unique solution of a Lax--Milgram-type problem in $H^1_{\mathrm{per},0}(Y;\R^n)$. Regarding the problem \eqref{chij pro intro}, we show that under the centering condition $\int_Y rb = 0$ there exists a unique solution $\chi_j\in H^2_{\mathrm{per}}(Y)$ and we suggest and analyze a finite element method for its approximation. Our approximation of the effective diffusion matrix \eqref{def Abar intro} relies on a combination of the finite element schemes for $r$ and $\chi_j$. We also discuss assumptions under which homogenization occurs in this setting.

The construction of finite element methods for FPK-type equations has received considerable attention over the past decades; we refer to \cite{BWJ96,BS68,KN06,Lan85,Lan91,MB05} for some of the early developments in the case of smooth coefficients. The distinguishing feature of our study in Setting $\calA$ is the rigorous error analysis for the periodic setting. For the case of merely essentially bounded measurable coefficients, there are very few publications, including the recent primal-dual weak Galerkin approach from \cite{LW23,WW20}. In our study for Setting $\calB$, we choose a different route by imposing a new Cordes-type condition, performing a suitable renormalization of the problem, and developing a simple finite element framework inspired by the prior works \cite{Gal17b,SS13,Spr24} on nondivergence-form problems.

Regarding the homogenization of linear elliptic equations in nondivergence-form, we refer to \cite{CSS20,GST22,GTY20,KL16,Spr24,ST21} for recent developments in periodic homogenization, to \cite{AL89,JZ23} for essential transformation procedures, and to \cite{AFL22,AL17,AS14,GT22,GT23} for recent developments in stochastic homogenization.

In recent years, significant progress has been made on the numerical homogenization of equations in nondivergence-form; see, e.g., \cite{CSS20,FGP24,Spr24} for linear equations and \cite{GSS21,KS22,QST24} for Hamilton--Jacobi--Bellman equations. Concerning the numerical homogenization of divergence-form problems with large drifts, we refer to \cite{BLL24,BFP24,HO10,LLM17,LPS18,ZC23} and the references therein. To be best of our knowledge, we are not aware of any previous research on developing finite element methods for approximating effective diffusion matrices in the context of nondivergence-form equations with large drift terms.

Finally, we refer to \cite{FP94,Gar97,HP08,LOY98,PS08} and the references therein for further contributions to the homogenization of convection-diffusion equations via probabilistic methods, and to \cite{WXZ21} for a stochastic structure-preserving scheme for computing the effective diffusivity of three-dimensional periodic or chaotic flows.

We briefly explain the organization of the paper. In Section \ref{Sec: Hom FPK}, we study the finite element approximation of stationary FPK equations subject to periodic boundary conditions, where Setting $\calA$ is discussed in Section \ref{Sec: 2A} and Setting $\calB$ is discussed in Section \ref{Sec: setting B}, respectively. In Section \ref{Sec: NonhomFPK}, we extend our results to stationary FPK-type problems of the form \eqref{FPK intro}. After that, we study the finite element approximation of the nondivergence-form problem \eqref{chij pro intro} and the numerical approximation of the effective diffusion matrix \eqref{def Abar intro} in Section \ref{Sec: SecAbar}, where Section \ref{Sec: SecAbar A} focuses on Setting $\calA$ and Section \ref{Sec: SecAbar B} focuses on Setting $\calB$. Finally, we demonstrate the theoretical results in numerical experiments provided in Section \ref{Sec: NumExp}.

\section{FEM for Stationary FPK Problems}\label{Sec: Hom FPK}

In this section, we discuss the finite element approximation of stationary FPK equations subject to periodic boundary conditions, i.e.,
\begin{align}\label{q problem}
-D^2:(Au) + \nabla\cdot (bu) = 0\quad\text{in } Y,\qquad u\text{ is $Y$-periodic},
\end{align} 
where $Y:=(0,1)^n$ denotes the unit cell in $\R^n$. We will always assume that
\begin{align}\label{Linfty}
A\in L^\infty_{\mathrm{per}}(Y;\R^{n\times n}_{\mathrm{sym}}),\qquad b\in L^\infty_{\mathrm{per}}(Y;\R^n),
\end{align}
and that $A$ is uniformly elliptic, i.e., 
\begin{align}\label{uniform ell}
\exists \lambda,\Lambda>0:\qquad \lambda I_n\leq A\leq \Lambda I_n\text{ a.e. in }\R^n.
\end{align}
We will consider two settings -- one with an additional regularity assumption on the coefficients, and one with a Cordes-type condition on the coefficients but without any additional regularity assumptions.
\begin{itemize}
\item Setting $\calA$ (higher regularity): We write $(A,b)\in \calA$ if \eqref{Linfty}, \eqref{uniform ell} hold and
\begin{align}\label{AinW1p}
A\in W^{1,p}_{\mathrm{per}}(Y;\R^{n\times n}_{\mathrm{sym}})\text{ for some }p>n,
\end{align}
where we always assume that $p\geq 2$.
\item Setting $\calB$ (Cordes-type): We write $(A,b)\in \calB$ if \eqref{Linfty}, \eqref{uniform ell} hold and
\begin{align}\label{mod Cordes}
\exists\, \delta\in \left(\frac{n}{n+\pi^2},1\right]:\qquad\frac{\lvert A\rvert^2 + \lvert b\rvert^2}{(\mathrm{tr}(A))^2} \leq \frac{1}{n-1+\delta}\quad\text{a.e. in }\R^n,
\end{align}
where $\lvert A\rvert := \sqrt{A:A}$ denotes the Frobenius norm of $A$.
\end{itemize}
The condition \eqref{mod Cordes} resembles the Cordes condition; see, e.g., \cite{Cor56,SS13,Tal65}. 

Throughout the paper, we use the notation $L^2_{\mathrm{per},0}(Y):=\{v\in L^2_{\mathrm{per}}(Y):\int_Y v = 0\}$ and $H^k_{\mathrm{per},0}(Y):=\{v\in H^k_{\mathrm{per}}(Y):\int_Y v = 0\}$ for $k\in \N$.

\subsection{Setting $\calA$}\label{Sec: 2A}

In this section, we study the well-posedness and the finite element approximation of solutions to the FPK problem \eqref{q problem} for the case $(A,b)\in\calA$.

\subsubsection{Well-posedness} 

First, we note that when $(A,b)\in\calA$, we can rewrite the problem \eqref{q problem} in divergence-form thanks to \eqref{AinW1p}, i.e.,
\begin{align*}
-\nabla\cdot\left(A\nabla u + (\mathrm{div}(A)-b) u\right) = 0\quad\text{in } Y,\qquad u\text{ is $Y$-periodic}.
\end{align*}
Let us also briefly note that by \eqref{AinW1p} and Sobolev embeddings, we have that $A\in C^{0,\alpha}(\R^n;\R^{n\times n}_{\mathrm{sym}})$ for some $\alpha > 0$. Then, \eqref{q problem} has a unique H\"{o}lder continuous solution up to multiplicative constants. More precisely, the following result is known to hold; see, e.g., \cite{BLP11,BKR01}.
\begin{proposition}[Analysis of \eqref{q problem} in setting $\calA$]\label{Prop: r}
Let $(A,b)\in \calA$. Then, there exists a unique solution $r\in H^1_{\mathrm{per}}(Y)$ to the problem 
\begin{align}\label{r problem}
-D^2:(Ar) + \nabla\cdot (br) = 0\quad\text{in } Y,\qquad r\text{ is $Y$-periodic},\qquad \int_{Y} r = 1,
\end{align} 
and any solution $u\in H^1_{\mathrm{per}}(Y)$ to \eqref{q problem} is a constant multiple of $r$. Further, $r\in W^{1,p}_{\mathrm{per}}(Y)$ with $p>n$ as in \eqref{AinW1p}, there holds $\inf_{\R^n}r >0$, and for $g\in L^2_{\mathrm{per}}(Y)$ the problem
\begin{align}\label{ndg}
-A:D^2 v - b\cdot\nabla v = g\quad\text{in } Y,\qquad v\text{ is $Y$-periodic},\qquad \int_{Y} v = 0
\end{align}
admits a (unique) solution $v\in H^1_{\mathrm{per},0}(Y)$ if and only if $\int_Y gr = 0$.
\end{proposition}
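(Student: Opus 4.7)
The plan is to exploit the $W^{1,p}$ regularity of $A$ with $p>n$ to recast \eqref{r problem} in divergence form, then apply the Fredholm alternative on $H^1_{\mathrm{per}}(Y)$, use the strong maximum principle to identify the kernel of the nondivergence-form adjoint with the constants, and conclude positivity and higher regularity via Harnack's inequality and standard elliptic regularity. The statement on \eqref{ndg} will follow by Fredholm duality.

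First I would use the product rule, together with $\mathrm{div}(A)\in L^p_{\mathrm{per}}(Y;\R^n)$, to rewrite \eqref{q problem} equivalently as
\[
-\nabla\cdot\bigl(A\nabla u + (\mathrm{div}(A)-b)u\bigr) = 0 \quad\text{in } Y,\qquad u \text{ is $Y$-periodic},
\]
and define the bilinear form $B(u,\phi):=\int_Y [A\nabla u\cdot\nabla\phi + u(\mathrm{div}(A)-b)\cdot\nabla\phi]$ on $H^1_{\mathrm{per}}(Y)$. Since $p>n$, a combination of H\"{o}lder's inequality, the Sobolev embedding $H^1\hookrightarrow L^{2p/(p-2)}$, and Young's inequality makes $B_\kappa(u,\phi) := B(u,\phi)+\kappa(u,\phi)_{L^2}$ coercive on $H^1_{\mathrm{per}}(Y)$ for $\kappa$ large. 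By Lax--Milgram together with the compact Rellich embedding $H^1_{\mathrm{per}}(Y)\hookrightarrow L^2_{\mathrm{per}}(Y)$, the operator $L$ induced by $B$ is then Fredholm of index zero. Its formal $L^2$-adjoint is the nondivergence-form operator $L^*v=-A:D^2 v - b\cdot\nabla v$, which has $C^{0,\alpha}$ principal coefficient (via $W^{1,p}\hookrightarrow C^{0,\alpha}$). Using interior $W^{2,q}$-estimates for such operators and the strong maximum principle on the torus, every periodic solution of $L^*v=0$ is constant; hence $\dim\ker L = \dim\ker L^* = 1$.

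Next, any nonzero $r\in\ker L$ is a weak solution of a uniformly elliptic divergence-form equation with drift in $L^p$, $p>n$. Trudinger's Harnack inequality gives $r\in C^{0,\alpha}_{\mathrm{per}}(Y)$ and forces $r$ to have a strict sign, so one may take $r>0$ and normalize $\int_Y r=1$, which also yields $\inf_{\R^n}r>0$. For $W^{1,p}$ regularity, I would rewrite the equation as $-\nabla\cdot(A\nabla r)=\nabla\cdot((\mathrm{div}(A)-b)r)$; since $r\in L^\infty$, the right-hand side lies in $W^{-1,p}$, and $W^{1,p}$-estimates for divergence-form equations with continuous principal coefficient yield $r\in W^{1,p}_{\mathrm{per}}(Y)$. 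Uniqueness of $r$, and the fact that every solution of \eqref{q problem} is a scalar multiple of $r$, follow from $\dim\ker L=1$.

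Finally, for \eqref{ndg} I would apply the analogous Fredholm framework to $L^*:H^2_{\mathrm{per}}(Y)\to L^2_{\mathrm{per}}(Y)$ (the shift $L^*+\kappa I$ being an isomorphism by standard $H^2$-estimates), so that $\mathrm{range}(L^*)=(\ker L)^\perp=\{g\in L^2_{\mathrm{per}}(Y):\int_Y gr=0\}$; imposing $\int_Y v=0$ selects the unique $H^1_{\mathrm{per},0}(Y)$-representative. I expect the main technical nuisance to be the coercivity bookkeeping for $B_\kappa$ with the merely $L^p$ drift (needing the careful H\"{o}lder--Sobolev--Young chain) and the reconciliation of the divergence-form framework used for $L$ with the nondivergence-form setting required to apply the strong maximum principle to $L^*$; everything else reduces to appeals to now-classical elliptic regularity and maximum-principle results.
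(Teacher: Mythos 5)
The paper offers no proof of this proposition---it is stated as known, with a pointer to \cite{BLP11,BKR01}---so there is no in-paper argument to compare against line by line. Your strategy (divergence-form rewriting using $\mathrm{div}(A)\in L^p$, a G{\aa}rding-type coercivity of $B_\kappa$ via the H\"older--Sobolev--Young chain, Fredholm alternative with the nondivergence operator as formal adjoint, strong maximum principle to identify $\ker L^*$ with the constants, and duality for \eqref{ndg}) is exactly the standard route underlying the cited references, and it is consistent with how the paper itself handles \eqref{q problem} elsewhere (the same divergence-form reformulation and G{\aa}rding inequality appear in Section 2.1.2). Most of the sketch is sound, including the $W^{1,p}$ bootstrap for $r$ and the identification of the solvability condition for \eqref{ndg} as $\int_Y gr=0$.

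The one genuine gap is the positivity step. You write that Trudinger's Harnack inequality ``forces $r$ to have a strict sign,'' but the Harnack inequality applies only to \emph{nonnegative} solutions; applied to a kernel element $r$ of unknown sign it gives nothing, and $M-r$ with $M=\max r$ is not a solution of the homogeneous equation because of the drift term, so the usual trick for pure second-order operators is unavailable. You first need to rule out sign changes, and the standard way to do this is through the adjoint: if $r$ changed sign, one could choose a strictly positive $g\in C^\infty_{\mathrm{per}}(Y)$ with $\int_Y gr=0$, solve $-A:D^2v-b\cdot\nabla v=g$ by your own Fredholm alternative, and reach a contradiction at an interior minimum of the periodic function $v$ (where $\nabla v=0$ and $D^2v\geq 0$ force the left-hand side to be $\leq 0$ while $g>0$). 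Only after normalizing $r\geq 0$ does the Harnack inequality upgrade this to $r>0$, and then continuity plus periodicity give $\inf_{\R^n}r>0$. With that step inserted, the argument is complete; the remaining ``reconciliation'' issues you flag (identifying the Banach adjoint's kernel with strong solutions of the nondivergence equation via elliptic regularity for H\"older-continuous $A$) are indeed routine.
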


Note that if $(A,b)\in \calA$, what we mean by a solution $v\in H^1_{\mathrm{per},0}(Y)$ to \eqref{ndg} is an element $v$ in $H^1_{\mathrm{per},0}(Y)$ that satisfies the natural weak formulation of the problem obtained by rewriting \eqref{ndg} in divergence-form.

\subsubsection{Finite element approximation of \eqref{r problem}}\label{Sec: FEM r Set A}

We now discuss the finite element approximation of \eqref{r problem}. The ideas are inspired by the earlier work \cite{CSS20}. 

We begin by noting that
\begin{align}\label{rhat defn}
\hat{r}:= r - 1 \in H^1_{\mathrm{per},0}(Y)
\end{align}
is the unique solution in $H^1_{\mathrm{per},0}(Y)$ to the divergence-form problem
\begin{align*}
-\nabla\cdot (A\nabla \hat{r}+(\mathrm{div}(A)-b) \hat{r}) = \nabla\cdot (\mathrm{div}(A)-b)\quad\text{in }Y,\;\; \hat{r}\text{ is $Y$-periodic},\;\; \int_{Y} \hat{r} = 0.
\end{align*}
More precisely, $\hat{r}$ is the unique element in $H^1_{\mathrm{per},0}(Y)$ satisfying
\begin{align}\label{rhat weak}
a(\hat{r},v) = \int_Y (b-\mathrm{div}(A))\cdot \nabla v\qquad \forall v\in H^1_{\mathrm{per},0}(Y),
\end{align}
where $a(\cdot,\cdot):H^1_{\mathrm{per},0}(Y)\times H^1_{\mathrm{per},0}(Y) \rightarrow \R$ is given by
\begin{align}\label{a bil form}
a(v_1,v_2):=\int_Y A\nabla v_1\cdot \nabla v_2+\int_Y v_1(\mathrm{div}(A)-b)\cdot \nabla v_2
\end{align}
for $v_1,v_2 \in H^1_{\mathrm{per},0}(Y)$. Clearly, $a$ defines a bounded bilinear form on $H^1_{\mathrm{per},0}(Y)$, i.e.,
\begin{align}\label{a bddness}
\lvert a(v_1,v_2)\rvert \leq C_1 \|v_1\|_{H^1(Y)}\|v_2\|_{H^1(Y)}\qquad \forall v_1,v_2\in H^1_{\mathrm{per},0}(Y) 
\end{align}
for some constant $C_1 > 0$, but the bilinear form $a$ is not coercive, making the finite element approximation of \eqref{rhat weak} nonstandard.

However, since $\mathrm{div}(A)-b\in L^p_{\mathrm{per}}(Y;\R^n)$ with $p>n$, it is easy to show using the assumed uniform ellipticity \eqref{uniform ell}, together with H\"{o}lder, Gagliardo--Nirenberg and Young inequalities, that the following G{\aa}rding  inequality holds:
\begin{align}\label{a Garding}
a(v,v)\geq \frac{\lambda}{2}\|v\|_{H^1(Y)}^2 - C_2\|v\|_{L^2(Y)}^2\qquad \forall v\in H^1_{\mathrm{per},0}(Y)
\end{align}
for some constant $C_2 > 0$. This enables us to prove the following result using an adaptation of Schatz's method \cite{Sch74}. 
\begin{theorem}[Finite element approximation of \eqref{r problem}]\label{Thm: Approx r in set A}
Let $(A,b)\in \calA$. Let $r\in H^1_{\mathrm{per}}(Y)$ denote the unique solution to \eqref{r problem}, and let $\hat{r}\in H^1_{\mathrm{per},0}(Y)$ be given by \eqref{rhat defn}. Then, there exists a constant $C_0 > 0$ such that for any $\alpha \in (0,C_0)$ it is true that if $R_h$ is a finite-dimensional closed linear subspace of $H^1_{\mathrm{per},0}(Y)$ with the property
\begin{align}\label{ch of psi defn}
\inf_{v_h\in R_h}\frac{\|\psi-v_h\|_{H^1(Y)}}{\|\psi\|_{H^2(Y)}}\leq \alpha\qquad \forall \psi\in H^2_{\mathrm{per},0}(Y)\backslash\{0\},
\end{align}
then there exists a unique $\hat{r}_h\in R_h$ such that 
\begin{align}\label{rhhat prob}
a(\hat{r}_h,v_h) = \int_Y (b-\mathrm{div}(A))\cdot \nabla v_h\qquad \forall v_h\in  R_h,
\end{align}
and setting $r_h :=1+\hat{r}_h$, there holds
\begin{align}\label{err bd rh}
\|r-r_h\|_{L^2(Y)} + \alpha \|r-r_h\|_{H^1(Y)} \leq C\alpha \inf_{v_h\in R_h} \|\hat{r}-v_h\|_{H^1(Y)}
\end{align}
for some constant $C>0$ depending only on $(A,b)$ and $n$. 
\end{theorem}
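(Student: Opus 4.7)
The argument is an instance of Schatz's technique adapted to the noncoercive, periodic bilinear form $a$. Since $r = 1 + \hat{r}$ and $r_h = 1 + \hat{r}_h$, the estimate \eqref{err bd rh} is equivalent to the same estimate for the error $e_h := \hat{r} - \hat{r}_h$. Given the G\aa rding bound \eqref{a Garding}, the core task will be to show, by an Aubin--Nitsche duality argument, that for every $z \in H^1_{\mathrm{per},0}(Y)$ satisfying the Galerkin-orthogonality condition $a(z,v_h)=0$ for all $v_h \in R_h$, one has $\|z\|_{L^2(Y)} \leq C_\star\,\alpha\,\|z\|_{H^1(Y)}$ with $C_\star$ depending only on $(A,b)$ and $n$. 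Existence and uniqueness of $\hat{r}_h$ will then follow as by-products, since \eqref{rhhat prob} is a square finite-dimensional linear system.

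\textbf{Adjoint problem.} For $g \in L^2_{\mathrm{per},0}(Y)$ I set $c_g := -\int_Y g\,r$, so that $\int_Y (g + c_g)\,r = 0$ because $\int_Y r = 1$. By \cref{Prop: r} there is a unique $w_g \in H^1_{\mathrm{per},0}(Y)$ satisfying $-A:D^2 w_g - b \cdot \nabla w_g = g + c_g$ in $Y$. Since $A \in W^{1,p}_{\mathrm{per}}$ with $p > n$ implies $A \in C^{0,\alpha}$, classical $W^{2,2}$-regularity for nondivergence-form equations (applied after absorbing the bounded drift $b\cdot\nabla w_g$ into the right-hand side) yields $w_g \in H^2_{\mathrm{per},0}(Y)$ together with $\|w_g\|_{H^2(Y)} \leq C \|g\|_{L^2(Y)}$. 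Integrating the equation against a test function $v \in H^1_{\mathrm{per},0}(Y)$ gives the adjoint identity $a(v,w_g) = \int_Y v g$, in which $c_g$ disappears by the mean-zero condition on $v$.

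\textbf{Duality estimate and closing.} For $z$ as above, choose $w_h \in R_h$ arbitrary: Galerkin orthogonality and \eqref{a bddness} give $\|z\|_{L^2(Y)}^2 = a(z,w_z) = a(z, w_z - w_h) \leq C_1 \|z\|_{H^1(Y)}\|w_z - w_h\|_{H^1(Y)}$. Taking the infimum over $w_h$, invoking \eqref{ch of psi defn} with $\psi = w_z$, and using the regularity bound on $w_z$ produces the duality estimate $\|z\|_{L^2(Y)} \leq C_\star\alpha\|z\|_{H^1(Y)}$. Applied to a hypothetical homogeneous discrete solution $u_h \in R_h$, this combined with \eqref{a Garding} (which gives $\tfrac{\lambda}{2}\|u_h\|_{H^1(Y)}^2 \leq a(u_h,u_h) + C_2\|u_h\|_{L^2(Y)}^2 = C_2\|u_h\|_{L^2(Y)}^2$) forces $u_h = 0$ whenever $C_2 C_\star^2\alpha^2 < \lambda/2$; this choice defines $C_0$ and establishes existence and uniqueness of $\hat{r}_h$. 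For the true error, Galerkin orthogonality and G\aa rding give, for any $v_h \in R_h$,
\[
\tfrac{\lambda}{2}\|e_h\|_{H^1(Y)}^2 \;\leq\; a(e_h, \hat{r} - v_h) + C_2\|e_h\|_{L^2(Y)}^2 \;\leq\; C_1\|e_h\|_{H^1(Y)}\|\hat{r}-v_h\|_{H^1(Y)} + C_2 C_\star^2\alpha^2\|e_h\|_{H^1(Y)}^2;
\]
absorbing the last term (possible for $\alpha < C_0$) produces the quasi-optimal bound $\|e_h\|_{H^1(Y)} \leq C\inf_{v_h}\|\hat{r}-v_h\|_{H^1(Y)}$. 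One more application of the duality estimate upgrades this to $\|e_h\|_{L^2(Y)} \leq C\alpha\inf_{v_h}\|\hat{r}-v_h\|_{H^1(Y)}$, and the two combine to \eqref{err bd rh}.

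\textbf{Main obstacle.} The principal subtlety is that the formal adjoint $L^\ast v := -A:D^2 v - b\cdot\nabla v$ has the constants in its kernel, so by the Fredholm alternative in \cref{Prop: r} the adjoint problem is solvable only under the compatibility condition $\int_Y g\,r = 0$, \emph{not} merely under vanishing mean. The correction $c_g$ restores this compatibility while being invisible to mean-zero test functions, so it does not pollute the duality computation. The second delicate ingredient is the $H^2$ regularity of $w_g$, and this is where the higher-regularity hypothesis $A \in W^{1,p}$, $p > n$, of setting $\calA$ enters decisively; for merely $L^\infty$ coefficients no such estimate would be available and an entirely different approach (pursued in setting $\calB$) is required.
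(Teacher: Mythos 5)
Your proposal is correct and follows essentially the same route as the paper's proof: Schatz's argument combining the G{\aa}rding inequality \eqref{a Garding} with an Aubin--Nitsche duality estimate built on the $H^2$-regular adjoint problem furnished by Proposition \ref{Prop: r}, with uniqueness/existence of $\hat{r}_h$ and the error bound \eqref{err bd rh} obtained exactly as you describe. The only (immaterial) difference is how the Fredholm compatibility condition $\int_Y (\text{data})\, r = 0$ for the dual problem is enforced: you shift the data by the constant $-\int_Y g\,r$, whereas the paper divides the data by $r$ (taking right-hand side $z_h/r$ and using $0<r_0\le r\le r_1$); both devices rest on the same positivity and boundedness of the invariant measure.
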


\begin{proof}
Let $\alpha\in (0,C_0)$, where $C_0>0$ will be chosen later (see \eqref{C0 choice}). Let $R_h$ be a finite-dimensional closed linear subspace of $H^1_{\mathrm{per},0}(Y)$ satisfying \eqref{ch of psi defn}. 

\medskip
\textit{Uniqueness of $\hat{r}_h$}: We show that \eqref{rhhat prob} can have at most one solution $\hat{r}_h\in R_h$. Before we start, note that in view of Proposition \ref{Prop: r}, there exist constants $r_0,r_1>0$ such that $r_0\leq r\leq r_1$ in $\R^n$. Now suppose $\hat{r}_{h}^{(1)}\in R_h$ and $\hat{r}_h^{(2)}\in R_h$ are two solutions to \eqref{rhhat prob}, and set $z_h:=\hat{r}_{h}^{(1)}-\hat{r}_{h}^{(2)}$. Noting that $z_h\in R_h \subset H^1_{\mathrm{per},0}(Y)$ and $a(z_h,v_h) = 0$ for all $v_h\in R_h$, we have by \eqref{a Garding} that
\begin{align}\label{zh H1 bd}
\|z_h\|_{H^1(Y)}^2 \leq \frac{2}{\lambda}\left( a(z_h,z_h) + C_2 \|z_h\|_{L^2(Y)}^2\right) \leq \frac{2 C_2}{\lambda} \|z_h\|_{L^2(Y)}\|z_h\|_{H^1(Y)}.
\end{align}
Since $\frac{z_h}{r}\in L^2_{\mathrm{per}}(Y)$ and $\int_Y z_h = 0$, we have by Proposition \ref{Prop: r} that the problem
\begin{align}\label{psiprob}
-A:D^2 \psi - b\cdot \nabla \psi = \frac{z_h}{r}\quad\text{in }Y,\qquad \psi\text{ is $Y$-periodic},\qquad \int_{Y} \psi = 0
\end{align} 
has a unique solution $\psi\in H^1_{\mathrm{per},0}(Y)$, i.e., $a(v,\psi) = \int_Y \frac{z_h}{r}v$ for any $v\in H^1_{\mathrm{per},0}(Y)$. Note that $\psi\in H^2_{\mathrm{per},0}(Y)$ and $\|\psi\|_{H^2(Y)}\leq C_3 \|\frac{z_h}{r}\|_{L^2(Y)}$ for some constant $C_3 > 0$; see \cite{GT01}. Then, we find that
\begin{align}\label{zhbd}
\begin{split}
\|z_h\|_{L^2(Y)}^2\leq r_1 \int_Y \frac{|z_h|^2}{r} = r_1 a(z_h,\psi) &= r_1 \inf_{v_h\in R_h} a(z_h,\psi-v_h) \\ &\leq r_1 C_1 \alpha \|z_h\|_{H^1(Y)}\|\psi\|_{H^2(Y)} \\ 
&\leq  \frac{r_1 C_1 C_3}{r_0} \alpha \|z_h\|_{H^1(Y)}\|z_h\|_{L^2(Y)},
\end{split}
\end{align} 
where we used the bounds on $r$, the properties of $\psi$ and $z_h$, and \eqref{a bddness}. Combining this estimate with \eqref{zh H1 bd}, we obtain
\begin{align*}
\|z_h\|_{H^1(Y)} \leq \frac{2C_2}{\lambda}\|z_h\|_{L^2(Y)}\leq \frac{2r_1 C_1 C_2 C_3}{\lambda r_0} \alpha \|z_h\|_{H^1(Y)}.
\end{align*}
Let $C_0>0$ be chosen as 
\begin{align}\label{C0 choice}
C_0 := \frac{\lambda r_0}{2r_1 C_1 C_2 C_3}.
\end{align}
Then, using that $\alpha < C_0$, there holds $\frac{2r_1 C_1 C_2 C_3}{\lambda r_0} \alpha < 1$ and hence, $z_h = 0$, i.e., there is at most one solution to \eqref{rhhat prob}. 

\medskip
\textit{Existence of $\hat{r}_h$}: As $R_h$ is finite-dimensional, uniqueness implies existence of a solution $\hat{r}_h\in R_h$ to \eqref{rhhat prob}. 

\medskip
\textit{Error bound}: It remains to show the error bound \eqref{err bd rh}. First, by \eqref{a Garding}, Galerkin orthogonality, and \eqref{a bddness}, we have that
\begin{align}\label{rh-rhh bd}
\begin{split}
\frac{\lambda}{2} \|\hat{r}-\hat{r}_h\|_{H^1(Y)}^2 
&\leq   \inf_{v_h\in R_h} a(\hat{r}-\hat{r}_h,\hat{r}-v_h) + C_2 \|\hat{r}-\hat{r}_h\|_{L^2(Y)}^2 \\
&\leq \left(C_1 \inf_{v_h\in R_h}\|\hat{r}-v_h\|_{H^1(Y)} +C_2\|\hat{r}-\hat{r}_h\|_{L^2(Y)}\right)\|\hat{r}-\hat{r}_h\|_{H^1(Y)}.
\end{split}
\end{align}
Next, by considering \eqref{psiprob} with $z_h$ replaced by $\hat{r}-\hat{r}_h$ and arguing as in \eqref{zhbd} with $z_h$ replaced by $\hat{r}-\hat{r}_h$, we find that
\begin{align}\label{L2 by H1}
\|\hat{r}-\hat{r}_h\|_{L^2(Y)}\leq \frac{r_1 C_1 C_3}{r_0}\alpha \|\hat{r}-\hat{r}_h\|_{H^1(Y)}.
\end{align}
Combining \eqref{rh-rhh bd} and \eqref{L2 by H1}, we obtain
\begin{align*}
\left(\frac{\lambda}{2}-\frac{r_1 C_1 C_2 C_3}{r_0}\alpha\right)\|\hat{r}-\hat{r}_h\|_{H^1(Y)} \leq C_1 \inf_{v_h\in R_h}\|\hat{r}-v_h\|_{H^1(Y)}.
\end{align*}
Finally, noting that $\frac{\lambda}{2}-\frac{r_1 C_1 C_2 C_3}{r_0}\alpha > 0$ by $\alpha < C_0$ and \eqref{C0 choice}, observing that $r-r_h = \hat{r}-\hat{r}_h$, and in view of \eqref{L2 by H1}, we conclude that \eqref{err bd rh} holds.
\end{proof}

As an example, if $R_h$ in Theorem \ref{Thm: Approx r in set A} is chosen to be the space of continuous piecewise affine zero-mean functions on a shape-regular triangulation $\calT_h$ of $\overline{Y}$ into triangles with longest edge $h>0$ conforming with the requirement of periodicity, then \eqref{ch of psi defn} holds with $\alpha = \calO(h)$.

A natural question to ask is if we can obtain a near-best approximation result in the $W^{1,p}(Y)$-norm. The following theorem answers this question positively under the additional assumption that $\mathrm{div}(A)\in L^\infty_{\mathrm{per}}(Y;\R^n)$, following arguments similar to \cite{BS08,RS82}. Note that if $(A,b)\in \calA$ and $\mathrm{div}(A)\in L^\infty_{\mathrm{per}}(Y;\R^n)$, then $r\in W^{1,p}_{\mathrm{per}}(Y)$ for all $p<\infty$ by elliptic regularity theory.

\begin{theorem}[$L^p$-estimates for the approximation of \eqref{r problem}]\label{Thm: Lp est r}

Let $p\in (1,\infty)$ and set $t:=\frac{p}{p-1}$. In the situation of Theorem \ref{Thm: Approx r in set A}, if additionally $\mathrm{div}(A)\in L^\infty_{\mathrm{per}}(Y;\R^n)$, $R_h \subset W^{1,\infty}_{\mathrm{per},0}(Y)$, and
\begin{align*}
\inf_{v_h\in R_h}\frac{\|\psi-v_h\|_{W^{1,t}(Y)}}{\|\psi\|_{W^{2,t}(Y)}}\leq \alpha\qquad \forall \psi\in W^{2,t}_{\mathrm{per},0}(Y)\backslash\{0\},
\end{align*}
then, for $\alpha > 0$ sufficiently small, we have the following bound
\begin{align}\label{Lp err bd rh}
\|r-r_h\|_{L^p(Y)} + \alpha \|r-r_h\|_{W^{1,p}(Y)} \leq C\alpha \inf_{v_h\in R_h} \|\hat{r}-v_h\|_{W^{1,p}(Y)}
\end{align}
for some constant $C>0$ depending only on $(A,b)$ and $n$. 
\end{theorem}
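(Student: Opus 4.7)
The plan is to extend the Schatz-type approach of Theorem~\ref{Thm: Approx r in set A} to the $L^p/W^{1,p}$-setting via a duality argument combined with a $W^{1,p}/W^{1,t}$ inf-sup analysis. The extra assumption $\mathrm{div}(A)\in L^\infty_{\mathrm{per}}(Y;\R^n)$ plays a double role: it ensures boundedness of $a$ on $W^{1,p}(Y)\times W^{1,t}(Y)$ via H\"older applied to the two summands of \eqref{a bil form}---which would fail if $\mathrm{div}(A)$ were only in $L^p$---and it provides full Calder\'on--Zygmund $W^{2,t}$-regularity for the associated dual problem on the torus.

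The first step is to extend Proposition~\ref{Prop: r} to $L^t$ data: for any $\tilde\phi\in L^t_{\mathrm{per}}(Y)$ satisfying $\int_Y \tilde\phi\, r = 0$, the equation
\begin{align*}
-A:D^2\psi - b\cdot\nabla\psi = \tilde\phi\quad\text{in }Y,\qquad \psi\text{ is $Y$-periodic},\qquad \int_Y\psi = 0
\end{align*}
admits a unique solution $\psi\in W^{2,t}_{\mathrm{per},0}(Y)$ with $\|\psi\|_{W^{2,t}(Y)}\leq C\|\tilde\phi\|_{L^t(Y)}$. This follows by rewriting the equation in divergence form as $-\nabla\cdot(A\nabla\psi) + (\mathrm{div}(A)-b)\cdot\nabla\psi = \tilde\phi$ and invoking standard $L^t$-regularity for second-order divergence-form equations with continuous principal part and $L^\infty$ lower-order coefficients.

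Given this, the $L^p$-bound follows by a Nitsche-type duality argument. For $\phi\in L^t_{\mathrm{per}}(Y)$, I set $\tilde\phi:=\phi - \int_Y\phi\, r$, so that $\int_Y\tilde\phi\, r = 0$ (using $r\in L^\infty$ since $r\in W^{1,p}_{\mathrm{per}}$ with $p>n$), and let $\psi\in W^{2,t}_{\mathrm{per},0}(Y)$ be the corresponding dual solution, giving $\|\psi\|_{W^{2,t}(Y)}\leq C\|\phi\|_{L^t(Y)}$. Writing $e := \hat r - \hat r_h\in H^1_{\mathrm{per},0}(Y)$ and using both $\int_Y e = 0$ and the Galerkin orthogonality from \eqref{rhhat prob},
\begin{align*}
\int_Y e\phi \;=\; \int_Y e\tilde\phi \;=\; a(e,\psi) \;=\; a(e,\psi - v_h) \;\leq\; C\|e\|_{W^{1,p}(Y)}\|\psi - v_h\|_{W^{1,t}(Y)}
\end{align*}
for every $v_h\in R_h$. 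Taking the infimum over $v_h$ and using the $W^{1,t}/W^{2,t}$-approximation hypothesis yields $\|r-r_h\|_{L^p(Y)} \leq C\alpha\|e\|_{W^{1,p}(Y)}$.

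The principal obstacle is the $W^{1,p}$-estimate. Following the strategy of \cite{BS08,RS82}, I would prove it via a discrete inf-sup argument. The continuous inf-sup
\begin{align*}
\sup_{\substack{v\in W^{1,t}_{\mathrm{per},0}(Y) \\ v\neq 0}}\frac{a(u,v)}{\|v\|_{W^{1,t}(Y)}} \;\geq\; c\|u\|_{W^{1,p}(Y)}\qquad \forall u\in W^{1,p}_{\mathrm{per},0}(Y)
\end{align*}
follows from Calder\'on--Zygmund regularity for the primal divergence-form problem (with a compatibility adjustment analogous to the $L^p$ step). A Fortin-type projection using the approximation assumption on $R_h$ transfers this inf-sup to $R_h$ for $\alpha > 0$ sufficiently small, whence Galerkin orthogonality combined with the $W^{1,p}\times W^{1,t}$-boundedness of $a$ implies a C\'ea-type bound
\begin{align*}
\|e\|_{W^{1,p}(Y)} \;\leq\; C\inf_{v_h\in R_h}\|\hat r - v_h\|_{W^{1,p}(Y)},
\end{align*}
which combined with the $L^p$-estimate above yields \eqref{Lp err bd rh}. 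The delicate step is transferring the continuous inf-sup to $R_h$: the hypotheses $R_h\subset W^{1,\infty}_{\mathrm{per},0}(Y)$ (ensuring the Fortin projection is well-defined in all $L^t$-scales) and the quantitative $W^{1,t}/W^{2,t}$-approximation property are used to absorb the resulting perturbation terms, which is precisely where "$\alpha > 0$ sufficiently small" appears.
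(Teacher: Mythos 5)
Your $L^p$ step is, in substance, the paper's: a Nitsche duality argument against the adjoint problem $-A:D^2\psi-b\cdot\nabla\psi=\tilde\phi$ with the compatibility correction $\tilde\phi=\phi-\int_Y\phi\,r$ and $W^{2,t}$-regularity of $\psi$ (the paper tests against the single function $\xi=\lvert\hat r-\hat r_h\rvert^{p-1}\operatorname{sign}(\hat r-\hat r_h)$ rather than dualizing over all $\phi\in L^t$, but that is cosmetic). This half is fine.

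The $W^{1,p}$ step is where you diverge from the paper, and as written it has a gap. Your plan is to push the continuous inf-sup for $a$ on $W^{1,p}_{\mathrm{per},0}\times W^{1,t}_{\mathrm{per},0}$ down to $R_h$ via a ``Fortin-type projection'' controlled by the $W^{1,t}/W^{2,t}$-approximation hypothesis. But for a given $u_h\in R_h$ the continuous supremizer $v$ is only an element of $W^{1,t}_{\mathrm{per},0}(Y)$, not of $W^{2,t}_{\mathrm{per},0}(Y)$, so the approximation hypothesis cannot be applied to it; and a uniformly $W^{1,t}$-stable projection onto $R_h$ that preserves $a(u_h,\cdot)$ is essentially the $W^{1,t}$-stability of the adjoint Galerkin projection for the full noncoercive form $a$ — i.e., the statement you are trying to prove, so the argument is circular as stated. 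The missing ingredient, which neither the approximation hypothesis nor $R_h\subset W^{1,\infty}_{\mathrm{per},0}(Y)$ supplies, is the $W^{1,p}$-quasi-optimality of the Galerkin projection for the \emph{coercive principal part} $\tilde a(v_1,v_2):=\int_Y A\nabla v_1\cdot\nabla v_2$ (a Rannacher--Scott-type result, \cite{RS82,BS08}, which requires structural assumptions on $R_h$ beyond those stated). The paper isolates exactly this ingredient by a reformulation you should adopt: from \eqref{rhhat prob} and \eqref{a bil form} one sees that $\hat r_h$ is the $\tilde a$-orthogonal projection onto $R_h$ of the auxiliary function $z\in H^1_{\mathrm{per},0}(Y)$ solving $-\nabla\cdot(A\nabla z)=\nabla\cdot((\mathrm{div}(A)-b)r_h)$; since $\hat r$ solves the same problem with $r_h$ replaced by $r$, one gets $\|\hat r-z\|_{W^{1,p}(Y)}\lesssim\|r-r_h\|_{L^p(Y)}\lesssim\alpha\|\hat r-\hat r_h\|_{W^{1,p}(Y)}$ from your $L^p$ estimate, and then $W^{1,p}$-quasi-optimality for $\tilde a$ applied to $z$ plus a triangle inequality yields $\|\hat r-\hat r_h\|_{W^{1,p}(Y)}\lesssim\alpha\|\hat r-\hat r_h\|_{W^{1,p}(Y)}+\inf_{v_h\in R_h}\|\hat r-v_h\|_{W^{1,p}(Y)}$, which is absorbed for small $\alpha$. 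Here the hypotheses $\mathrm{div}(A)\in L^\infty$ and $R_h\subset W^{1,\infty}_{\mathrm{per},0}(Y)$ are used to guarantee $(\mathrm{div}(A)-b)r_h\in L^p$ and hence $z\in W^{1,p}_{\mathrm{per}}(Y)$ — not, as you suggest, to define a Fortin operator. Your inf-sup route can be repaired along the same lines (split $a=\tilde a+k$, use the discrete inf-sup for $\tilde a$ and absorb $k$ via the duality bound), but you must name and invoke the $\tilde a$-stability result explicitly.
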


\begin{proof}
First, observe that $\xi:=\left\lvert \hat{r}-\hat{r}_h\right\rvert^{p-1}\mathrm{sign}(\hat{r}-\hat{r}_h)\in L^\infty_{\mathrm{per}}(Y)$ as $\hat{r},\hat{r}_h\in L^\infty_{\mathrm{per}}(Y)$.
By Proposition \ref{Prop: r}, there exists a unique solution $\psi\in H^1_{\mathrm{per},0}(Y)$ to the problem
\begin{align*}
-A:D^2 \psi - b\cdot \nabla \psi = \xi - \int_Y \xi r\quad\text{in }Y,\qquad \psi\text{ is $Y$-periodic},\qquad \int_{Y} \psi = 0.
\end{align*}
Note that $\psi\in W^{2,t}_{\mathrm{per}}(Y)$ and $\|\psi\|_{W^{2,t}(Y)}\leq C \|\xi - \int_Y \xi r\|_{L^t(Y)}$ for some constant $C>0$; see \cite{GT01}. Using that $\int_Y(\hat{r}-\hat{r}_h) = 0$, we find that
\begin{align*}
\|\hat{r}-\hat{r}_h\|_{L^p(Y)}^p =  a(\hat{r}-\hat{r}_h,\psi) &= \inf_{v_h\in R_h} a(\hat{r}-\hat{r}_h,\psi-v_h)
\\ &\lesssim  \|\hat{r}-\hat{r}_h\|_{W^{1,p}(Y)}\inf_{v_h\in R_h} \|\psi-v_h\|_{W^{1,t}(Y)} 
\\ &\lesssim  \alpha \|\hat{r}-\hat{r}_h\|_{W^{1,p}(Y)} \left\|\xi - \int_Y \xi r\right\|_{L^t(Y)}
\\ &\lesssim  \alpha\|\hat{r}-\hat{r}_h\|_{W^{1,p}(Y)} \|\xi\|_{L^t(Y)},
\end{align*}
where the constants absorbed in ``$\lesssim$" only depend on $(A,b)$ and $n$. Since $\|\xi\|_{L^t(Y)} = \|\hat{r}-\hat{r}_h\|_{L^p(Y)}^{p-1}$, we deduce that
\begin{align}\label{Lp errbd}
\|r-r_h\|_{L^p(Y)} = \|\hat{r}-\hat{r}_h\|_{L^p(Y)} \leq C \alpha\|\hat{r}-\hat{r}_h\|_{W^{1,p}(Y)}
\end{align}
for some constant $C>0$ only depending on $(A,b)$ and $n$. 

Next, let $z\in H^1_{\mathrm{per},0}(Y)$ denote the unique solution to
\begin{align*}
-\nabla\cdot(A\nabla z) = \nabla\cdot ((\mathrm{div}(A)-b) r_h)\quad\text{in }Y,\qquad z\text{ is $Y$-periodic},\qquad \int_{Y} z = 0,
\end{align*} 
and note that $z\in W^{1,p}_{\mathrm{per}}(Y)$. Observe that $\hat{r}$ solves a similar problem with $r_h$ replaced by $r$. In particular, 
\begin{align*}
\|\hat{r}-z\|_{W^{1,p}(Y)}\lesssim \|(\mathrm{div}(A)-b) (r-r_h)\|_{L^p(Y)} \lesssim \|r-r_h\|_{L^p(Y)} \lesssim \alpha \|\hat{r}-\hat{r}_h\|_{W^{1,p}(Y)}.
\end{align*}
Note that $\hat{r}_h\in R_h$ is the $\tilde{a}$-orthogonal projection of $z$ onto $R_h$, where $\tilde{a}(v_1,v_2):=\int_Y A\nabla v_1\cdot \nabla v_2$. Hence, by standard finite element theory,
\begin{align*}
\|z-\hat{r}_h\|_{W^{1,p}(Y)}\lesssim \inf_{v_h\in R_h} \|z-v_h\|_{W^{1,p}(Y)} \lesssim  \|\hat{r}-z\|_{W^{1,p}(Y)} +\inf_{v_h\in R_h} \|\hat{r}-v_h\|_{W^{1,p}(Y)}.
\end{align*}
By combining with the previous estimate, we obtain
\begin{align*}
\|\hat{r}-\hat{r}_h\|_{W^{1,p}(Y)} \lesssim \alpha \|\hat{r}-\hat{r}_h\|_{W^{1,p}(Y)} + \inf_{v_h\in R_h} \|\hat{r}-v_h\|_{W^{1,p}(Y)}.
\end{align*}
For $\alpha > 0$ sufficiently small, we can absorb the first term on the right-hand side into the left-hand side and obtain
\begin{align*}
\|r-r_h\|_{W^{1,p}(Y)} = \|\hat{r}-\hat{r}_h\|_{W^{1,p}(Y)} \leq C  \inf_{v_h\in R_h} \|\hat{r}-v_h\|_{W^{1,p}(Y)}
\end{align*}
for some constant $C>0$. Because of \eqref{Lp errbd}, we conclude that \eqref{Lp err bd rh} holds.
\end{proof}

\subsection{Setting $\calB$}\label{Sec: setting B}

In this section, we study well-posedness and the finite element approximation of solutions to the FPK problem \eqref{q problem} for the case $(A,b)\in \calB$, i.e., we now merely assume that
\begin{align*}
A\in L^\infty_{\mathrm{per}}(Y;\R^{n\times n}_{\mathrm{sym}}),\qquad b\in L^\infty_{\mathrm{per}}(Y;\R^n),
\end{align*}
together with uniformly ellipticity (see \eqref{uniform ell}), and that the Cordes-type condition 
\begin{align}\label{mod Cordes again}
\exists\, \delta\in \left(\frac{n}{n+\pi^2},1\right]:\qquad\frac{\lvert A\rvert^2 + \lvert b\rvert^2}{(\mathrm{tr}(A))^2} \leq \frac{1}{n-1+\delta}\quad\text{a.e. in }\R^n
\end{align}
is satisfied. In this case, the problem \eqref{q problem} cannot be reformulated in divergence-form and we will seek (so-called \textit{very weak}) solutions to \eqref{q problem} in $L^2$, i.e., $u\in L^2_{\mathrm{per}}(Y)$ such that
\begin{align*}
\int_Y uA:D^2 \fhi + \int_Y u b\cdot \nabla \fhi = 0\qquad \forall \fhi\in H^2_{\mathrm{per}}(Y).
\end{align*}
Let us introduce the renormalization function
\begin{align}\label{gam}
\gamma := \frac{\mathrm{tr}(A)}{\lvert A\rvert^2 + \lvert b\rvert^2},
\end{align}
and the renormalized coefficients
\begin{align}\label{At Bt}
\tilde{A}:=\gamma A,\qquad \tilde{b}:=\gamma b.
\end{align}
Noting that $\mathrm{tr}(A)$ is the sum of eigenvalues of $A$, and $\lvert A\rvert^2$ is the sum of squared eigenvalues of $A$, we make the following observation.
\begin{remark}[Properties of $\gamma$]\label{Rk: prop gam}
If $(A,b)\in \calB$, then $\gamma\in L^\infty_{\mathrm{per}}(Y)$ and 
\begin{align*}
\gamma_0 := \frac{n\lambda}{n\Lambda^2 + \|b\|_{L^\infty(Y;\R^n)}^2} \leq \gamma \leq \frac{\Lambda}{\lambda^2}\quad\text{a.e. in }\R^n.
\end{align*}
In particular, $\gamma$ is positive almost everywhere.
\end{remark}
We then consider the renormalized FPK problem
\begin{align}\label{q problem renorm}
-D^2:(\tilde{A}\tilde{u}) + \nabla\cdot (\tilde{b}\tilde{u}) = 0\quad\text{in } Y,\qquad \tilde{u}\text{ is $Y$-periodic},
\end{align}
and make the following observation.

\begin{remark}[Relationship between the original and renormalized FPK problem]\label{Rk: rel ren orig}
Let $\mathbb{L}$ denote the set of solutions in $L^2_{\mathrm{per}}(Y)$ to the FPK problem \eqref{q problem}. Then,
\begin{align*}
\mathbb{L} = \{\left.\gamma \tilde{u}\, \right\rvert \tilde{u}\in \tilde{\mathbb{L}}\},
\end{align*}
where $\tilde{\mathbb{L}}$ denotes the set of solutions in $L^2_{\mathrm{per}}(Y)$ to the renormalized FPK problem \eqref{q problem renorm}.
\end{remark}

Because of Remark \ref{Rk: rel ren orig}, we will focus our attention on the renormalized FPK problem \eqref{q problem renorm}. We will begin by analyzing the well-posedness of \eqref{q problem renorm}. 

\subsubsection{Well-posedness}

The key consequence of the Cordes-type condition \eqref{mod Cordes again} is captured in the following lemma.
\begin{lemma}[Consequences of condition \eqref{mod Cordes again}]\label{Lmm: cons of Cor}
Let $(A,b)\in \calB$. Let $(\tilde{A},\tilde{b})$ denote the pair of renormalized coefficients given by \eqref{gam} and \eqref{At Bt}. Then, the following assertions hold:
\begin{itemize}
\item[(i)] We have the bound
\begin{align*}
\left\lvert \tilde{A}-I_n\right\rvert^2 + \left\lvert \tilde{b}\right\rvert^2 \leq 1 - \delta\quad\text{a.e. in }\R^n.
\end{align*}
\item[(ii)] There exists a constant $\kappa = \kappa(\delta,n)\in (0,1]$ such that
\begin{align*}
\left\|\tilde{A}:Dw + \tilde{b}\cdot w - \nabla\cdot w\right\|_{L^2(Y)}\leq \sqrt{1-\kappa}\,\|D w\|_{L^2(Y;\R^{n\times n})}
\end{align*}
for any $w\in H^1_{\mathrm{per},0}(Y;\R^n)$.
\end{itemize}
\end{lemma}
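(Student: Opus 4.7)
My plan is to prove (i) by a direct algebraic manipulation of the definition of $\gamma$, and then to deduce (ii) from (i) via a pointwise Cauchy--Schwarz inequality followed by a periodic Poincar\'e estimate.

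For (i), expanding the square gives
\begin{align*}
|\tilde{A} - I_n|^2 + |\tilde{b}|^2 = \gamma^2\bigl(|A|^2 + |b|^2\bigr) - 2\gamma\,\mathrm{tr}(A) + n.
\end{align*}
The defining identity $\gamma\bigl(|A|^2 + |b|^2\bigr) = \mathrm{tr}(A)$ from \eqref{gam} collapses the first two terms into $-\mathrm{tr}(A)^2/(|A|^2+|b|^2)$, so the right-hand side equals $n - \mathrm{tr}(A)^2/(|A|^2+|b|^2)$. Rearranging the Cordes-type condition \eqref{mod Cordes again} as $\mathrm{tr}(A)^2/(|A|^2+|b|^2) \geq n-1+\delta$ a.e., one concludes $|\tilde{A}-I_n|^2 + |\tilde{b}|^2 \leq n - (n-1+\delta) = 1-\delta$ a.e.

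For (ii), the key observation is that $\nabla\cdot w = I_n:Dw$, which rewrites the integrand as
\begin{align*}
\tilde{A}:Dw + \tilde{b}\cdot w - \nabla\cdot w = (\tilde{A} - I_n):Dw + \tilde{b}\cdot w.
\end{align*}
Viewing $(\tilde{A}-I_n,\tilde{b})$ and $(Dw,w)$ as elements of $\R^{n\times n}\times\R^n$ equipped with the natural product inner product, Cauchy--Schwarz gives, pointwise and then using (i),
\begin{align*}
\bigl|(\tilde{A}-I_n):Dw + \tilde{b}\cdot w\bigr|^2 \leq \bigl(|\tilde{A}-I_n|^2 + |\tilde{b}|^2\bigr)\bigl(|Dw|^2 + |w|^2\bigr) \leq (1-\delta)\bigl(|Dw|^2 + |w|^2\bigr).
\end{align*}
Integrating over $Y$ and invoking the periodic Poincar\'e inequality $\|w\|_{L^2(Y;\R^n)}^2 \leq C_P\|Dw\|_{L^2(Y;\R^{n\times n})}^2$ valid on $H^1_{\mathrm{per},0}(Y;\R^n)$ for some constant $C_P=C_P(n)>0$ then yields
\begin{align*}
\|\tilde{A}:Dw + \tilde{b}\cdot w - \nabla\cdot w\|_{L^2(Y)}^2 \leq (1-\delta)(1+C_P)\|Dw\|_{L^2(Y;\R^{n\times n})}^2,
\end{align*}
so one may set $\kappa := 1 - (1-\delta)(1+C_P)$.

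The only delicate matter is to ensure $\kappa\in(0,1]$. Positivity reduces to the inequality $\delta > C_P/(1+C_P)$, which is precisely the threshold $\delta>n/(n+\pi^2)$ of Setting $\calB$ under the choice $C_P=n/\pi^2$; in fact a Fourier expansion yields the sharper Poincar\'e constant $1/(4\pi^2)$ for mean-zero periodic functions on $Y$, so the hypothesis on $\delta$ comfortably forces $\kappa>0$. The bound $\kappa\leq 1$ is automatic since $(1-\delta)(1+C_P)\geq 0$. No genuine obstacle arises here; the whole argument hinges on spotting the rewriting $\tilde{A}:Dw-\nabla\cdot w = (\tilde{A}-I_n):Dw$, which is what places the two Cordes ingredients $|\tilde{A}-I_n|^2$ and $|\tilde{b}|^2$ on an equal footing so that (i) can be applied.
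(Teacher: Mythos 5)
Your proposal is correct and follows essentially the same route as the paper: the identity $\gamma(|A|^2+|b|^2)=\mathrm{tr}(A)$ for (i), then the rewriting $\nabla\cdot w=I_n:Dw$, pointwise Cauchy--Schwarz on $\R^{n\times n}\times\R^n$, and the periodic Poincar\'e inequality with $C_P=n/\pi^2$ for (ii), yielding the same $\kappa=1-(1-\delta)(1+n/\pi^2)$. Your side remark that the sharp periodic Poincar\'e constant $1/(4\pi^2)$ would permit a weaker threshold on $\delta$ is a valid observation, but not needed.
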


\begin{proof}
Using that $\tilde{A}=\gamma A$ and $\tilde{b} = \gamma b$, we compute
\begin{align*}
\left\lvert \tilde{A}-I_n\right\rvert^2 + \left\lvert \tilde{b}\right\rvert^2 &= n + \gamma^2 \left( \lvert A\rvert^2 + \lvert b\rvert^2 \right) - 2 \gamma\, \mathrm{tr}(A) \\ &= n - \gamma\, \mathrm{tr}(A) \\ &= n - \frac{(\mathrm{tr}(A))^2}{\lvert A\rvert^2 + \lvert b\rvert^2} \leq 1-\delta,
\end{align*}
where we have used \eqref{mod Cordes again} in the final step. This completes the proof of (i) and it remains to show (ii). To this end, let us first note that by Poincar\'{e}'s inequality (see Theorem 3.2 in \cite{Beb03}) there holds
\begin{align}\label{Poin}
\|w\|_{L^2(Y;\R^n)}\leq \frac{\sqrt{n}}{\pi} \|Dw\|_{L^2(Y;\R^{n\times n})}\qquad \forall w\in H^1_{\mathrm{per},0}(Y;\R^n).
\end{align}
Using \eqref{Poin} and (i), we find that
\begin{align*}
\left\|\tilde{A}:Dw + \tilde{b}\cdot w - \nabla\cdot w\right\|_{L^2(Y)}^2 &= \left\|(\tilde{A}-I_n):Dw + \tilde{b}\cdot w\right\|_{L^2(Y)}^2 \\
&\leq \int_Y \left(\left\lvert \tilde{A}-I_n\right\rvert^2 + \left\lvert \tilde{b}\right\rvert^2\right)\left(\lvert Dw\rvert^2 +\lvert w\rvert^2\right) \\
&\leq (1-\delta) \left(\|Dw\|_{L^2(Y;\R^{n\times n})}^2 + \|w\|_{L^2(Y;\R^n)}^2\right) \\
&\leq (1-\delta)\left(1+\frac{n}{\pi^2}\right)\|Dw\|_{L^2(Y;\R^{n\times n})}^2 \\ &= (1-\kappa)\|Dw\|_{L^2(Y;\R^{n\times n})}^2
\end{align*}
for any $w\in H^1_{\mathrm{per},0}(Y;\R^n)$, where $\kappa := (\delta - \frac{n}{n+\pi^2})\frac{n+\pi^2}{\pi^2}$. Note that $\kappa \in (0,1]$ as $\frac{n}{n+\pi^2} < \delta \leq 1$.
\end{proof}

\begin{remark}
If $\lvert b \rvert = 0$ a.e., then Lemma \ref{Lmm: cons of Cor} holds with $\kappa = \delta$, and we can relax the range of $\delta$ in \eqref{mod Cordes again} to $\delta \in (0,1]$.
\end{remark}

\begin{remark}[Another bound]\label{Rk: cons of LmmCor}
In the situation of Lemma \ref{Lmm: cons of Cor} there holds 
\begin{align*}
\left\|\tilde{A}:D^2 \fhi + \tilde{b}\cdot \nabla \fhi - \Delta \fhi\right\|_{L^2(Y)}\leq \sqrt{1-\kappa}\,\|\Delta\fhi\|_{L^2(Y)}\qquad \forall \fhi \in H^2_{\mathrm{per}}(Y),
\end{align*}
where $\kappa\in (0,1]$ is the constant from Lemma \ref{Lmm: cons of Cor}(ii). Indeed, this inequality follows from choosing $w = \nabla \fhi\in H^1_{\mathrm{per},0}(Y;\R^n)$ in Lemma \ref{Lmm: cons of Cor}(ii) and using the fact that that $\|D^2 \fhi\|_{L^2(Y)} = \|\Delta \fhi\|_{L^2(Y)}$ for any $\fhi \in H^2_{\mathrm{per}}(Y)$.
\end{remark}
Next, let us observe that any $\tilde{u}\in L^2_{\mathrm{per}}(Y)$ can be written as $\tilde{u} = c - \Delta \psi_c$ for some unique $c\in \R$ (namely $c = \int_Y \tilde{u}$) and unique $\psi_c \in H^2_{\mathrm{per},0}(Y)$. Inserting this ansatz into \eqref{q problem renorm} leads to the problem of finding $c\in \R$ and $\psi_c \in H^2_{\mathrm{per},0}(Y)$ such that 
\begin{align}\label{B1 prob}
B_1(\psi_c,\fhi) = c\int_Y \left(\tilde{A}:D^2 \fhi +   \tilde{b}\cdot \nabla \fhi\right)\qquad \forall \fhi\in H^2_{\mathrm{per},0}(Y),
\end{align}
where $B_1(\cdot,\cdot):H^2_{\mathrm{per},0}(Y)\times H^2_{\mathrm{per},0}(Y)\rightarrow \R$ is the bilinear form defined by
\begin{align}\label{B1 defn}
B_1(\fhi_1,\fhi_2) := \int_Y \Delta \fhi_1\left(\tilde{A}:D^2 \fhi_2 + \tilde{b}\cdot \nabla \fhi_2\right).
\end{align}
We can show the following result:
\begin{theorem}[Analysis of \eqref{q problem} in setting $\calB$]\label{Thm: ana in set B}
Let $(A,b)\in \calB$. Let $\gamma$ denote the renormalization function and $(\tilde{A},\tilde{b})$ be the pair of renormalized coefficients given by \eqref{gam}, \eqref{At Bt}. Further, let $B_1:H^2_{\mathrm{per},0}(Y)\times H^2_{\mathrm{per},0}(Y)\rightarrow \R$ denote the bilinear form defined in \eqref{B1 defn}.
\begin{itemize}
\item[(i)] For any $c\in \R$ there exists a unique solution $\psi_c\in H^2_{\mathrm{per},0}(Y)$ to \eqref{B1 prob}, and we have that $\psi_c = c \psi_1$.
\item[(ii)] The function $\tilde{r}:=1-\Delta \psi_1$ is the unique solution in $L^2_{\mathrm{per}}(Y)$ to the problem 
\begin{align}\label{rt prob}
-D^2:\left(\tilde{A}\tilde{r}\right) + \nabla\cdot \left(\tilde{b}\tilde{r}\right) = 0\quad\text{in } Y,\qquad \tilde{r}\text{ is $Y$-periodic},\qquad \int_Y \tilde{r} = 1.
\end{align}
Further, we have that $\tilde{r}\geq 0$ a.e. in $\R^n$.
\item[(iii)] The set $\mathbb{L}$ of all solutions in $L^2_{\mathrm{per}}(Y)$ to the FPK problem \eqref{q problem} is given by $\mathbb{L} = \{\left.c\gamma\tilde{r}\,\right\rvert c\in \R\}$.
\end{itemize}
\end{theorem}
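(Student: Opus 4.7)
The plan is to exploit \cref{Lmm: cons of Cor}\,(ii) via \cref{Rk: cons of LmmCor} to recognize $B_1$ as a coercive bilinear form on $H^2_{\mathrm{per},0}(Y)$ equipped with the inner product $(\fhi_1,\fhi_2)\mapsto \int_Y \Delta\fhi_1\,\Delta\fhi_2$; the induced norm is equivalent to the standard $H^2$-norm by applying \eqref{Poin} first to $\nabla\fhi\in H^1_{\mathrm{per},0}(Y;\R^n)$ and then to $\fhi$, combined with the Fourier identity $\|D^2\fhi\|_{L^2(Y)}=\|\Delta\fhi\|_{L^2(Y)}$ on $H^2_{\mathrm{per}}(Y)$. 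For part (i), I split
\begin{align*}
B_1(\fhi,\fhi)=\|\Delta\fhi\|_{L^2(Y)}^2 + \int_Y \Delta\fhi\,\bigl(\tilde{A}:D^2\fhi+\tilde{b}\cdot\nabla\fhi-\Delta\fhi\bigr),
\end{align*}
and combine Cauchy--Schwarz with \cref{Rk: cons of LmmCor} to obtain $B_1(\fhi,\fhi)\geq (1-\sqrt{1-\kappa})\|\Delta\fhi\|_{L^2(Y)}^2$, which is strictly positive since $\kappa>0$. Boundedness of $B_1$ and continuity of the right-hand side of \eqref{B1 prob} on $H^2_{\mathrm{per},0}(Y)$ follow at once from the $L^\infty$-bounds on $(\tilde{A},\tilde{b})$, so Lax--Milgram delivers existence and uniqueness of $\psi_c$, and linearity in $c$ gives $\psi_c=c\psi_1$.

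For part (ii), the identity $\int_Y\tilde{r}=1$ is immediate from periodicity of $\nabla\psi_1$. To verify that $\tilde{r}$ solves \eqref{rt prob} in the very weak sense, I take $\fhi\in H^2_{\mathrm{per}}(Y)$, write $\fhi=\int_Y\fhi+\hat{\fhi}$ with $\hat{\fhi}\in H^2_{\mathrm{per},0}(Y)$, note that the constant part contributes nothing, and compute
\begin{align*}
\int_Y \tilde{r}\bigl(\tilde{A}:D^2\hat{\fhi}+\tilde{b}\cdot\nabla\hat{\fhi}\bigr) = \int_Y\bigl(\tilde{A}:D^2\hat{\fhi}+\tilde{b}\cdot\nabla\hat{\fhi}\bigr) - B_1(\psi_1,\hat{\fhi})= 0
\end{align*}
using \eqref{B1 prob} at $c=1$. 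Reversing this computation yields uniqueness: any $\tilde{u}\in L^2_{\mathrm{per}}(Y)$ solving \eqref{rt prob} with $\int_Y\tilde{u}=1$ decomposes as $\tilde{u}=1-\Delta\psi$ for a unique $\psi\in H^2_{\mathrm{per},0}(Y)$, and the very weak equation forces $\psi$ to satisfy \eqref{B1 prob} at $c=1$, so $\psi=\psi_1$ by (i).

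Nonnegativity of $\tilde{r}$ is the main obstacle, and my plan is to handle it by approximation. Set $(\tilde{A}_\eps,\tilde{b}_\eps):=(\tilde{A},\tilde{b})\ast\rho_\eps$ with a standard periodic mollifier $\rho_\eps$. Jensen's inequality applied to $|\cdot|^2$ shows that the pointwise bound from \cref{Lmm: cons of Cor}\,(i) passes to $(\tilde{A}_\eps,\tilde{b}_\eps)$, and uniform ellipticity is preserved by convexity of the positive definite cone. Being smooth, the mollified coefficients lie in setting $\calA$, so \cref{Prop: r}, applied to the renormalized FPK problem with coefficients $(\tilde{A}_\eps,\tilde{b}_\eps)$, yields a positive $W^{1,p}$-regular solution $\tilde{r}_\eps$ with $\int_Y\tilde{r}_\eps=1$. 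The $L^2$-uniqueness just established, applied to the mollified problem, then identifies $\tilde{r}_\eps=1-\Delta\psi_1^\eps$, where $\psi_1^\eps\in H^2_{\mathrm{per},0}(Y)$ is the Lax--Milgram solution of the $\eps$-analogue of \eqref{B1 prob}. The coercivity constant $1-\sqrt{1-\kappa}$ is uniform in $\eps$, and $(\tilde{A}_\eps,\tilde{b}_\eps)\to(\tilde{A},\tilde{b})$ in every $L^q$; standard Lax--Milgram stability then gives $\psi_1^\eps\to\psi_1$ in $H^2_{\mathrm{per},0}(Y)$, whence $\tilde{r}_\eps\to\tilde{r}$ in $L^2_{\mathrm{per}}(Y)$, and pointwise positivity of $\tilde{r}_\eps$ transfers to a.e.\ nonnegativity of the limit.

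Finally, for part (iii), \cref{Rk: rel ren orig} reduces the claim to identifying $\tilde{\mathbb{L}}=\{c\tilde{r}:c\in\R\}$. Any $\tilde{u}\in\tilde{\mathbb{L}}$ of mean $c:=\int_Y\tilde{u}$ decomposes as $\tilde{u}=c-\Delta\psi$ for a unique $\psi\in H^2_{\mathrm{per},0}(Y)$; testing the very weak equation against $\fhi\in H^2_{\mathrm{per},0}(Y)$ produces $B_1(\psi,\fhi)=c\int_Y(\tilde{A}:D^2\fhi+\tilde{b}\cdot\nabla\fhi)$, which is \eqref{B1 prob} for $\psi_c$, so $\psi=c\psi_1$ by (i) and $\tilde{u}=c\tilde{r}$. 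This yields $\mathbb{L}=\{c\gamma\tilde{r}:c\in\R\}$. The delicate step in this plan is the mollification argument in (ii): I need Jensen's inequality to preserve the pointwise Cordes-type bound together with uniform ellipticity, and I need Lax--Milgram stability to deliver strong convergence in $H^2$ so that pointwise positivity of $\tilde{r}_\eps$ survives in the $L^2$-limit.
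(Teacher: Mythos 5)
Your proposal follows the paper's strategy almost exactly: Lax--Milgram for $B_1$ using the coercivity supplied by Remark \ref{Rk: cons of LmmCor}, the decomposition $\tilde{u}=c-\Delta\psi_c$ to prove (ii) and (iii), and a mollification of the renormalized coefficients (with the Cordes bound preserved by Jensen's inequality) to obtain nonnegativity. The one genuine divergence is how you pass to the limit in the mollified problems. The paper derives only a uniform bound on $\|\Delta\psi_k\|_{L^2(Y)}$, extracts a weakly convergent subsequence $\tilde{r}_k\rightharpoonup\tilde{r}_0$ in $L^2$, passes to the limit in the very weak formulation against smooth test functions, identifies $\tilde{r}_0=\tilde{r}$ by the uniqueness already proved, and concludes $\tilde{r}\geq 0$ because the nonnegative cone is weakly closed. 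You instead claim strong convergence $\psi_1^\eps\to\psi_1$ in $H^2$ from a quantitative Lax--Milgram stability estimate. This works, but your stated justification is slightly too quick: the perturbation term $\int_Y\Delta\psi_1\,(\tilde{A}-\tilde{A}_\eps):D^2(\psi_1^\eps-\psi_1)$ must be bounded by $o(1)\,\|D^2(\psi_1^\eps-\psi_1)\|_{L^2(Y)}$, and since $\Delta\psi_1$ is only in $L^2$, convergence of $\tilde{A}_\eps\to\tilde{A}$ in every finite $L^q$ does not suffice via H\"older; you need $\|(\tilde{A}-\tilde{A}_\eps)\Delta\psi_1\|_{L^2(Y)}\to 0$, which follows from the uniform $L^\infty$ bound on the coefficients together with a.e.\ convergence of the mollifications and dominated convergence. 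With that repair your route is complete; the paper's weak-compactness route avoids the stability estimate altogether at the cost of invoking weak closedness of the nonnegative cone and a subsequence argument, and is the more economical of the two since uniqueness has already been established.
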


\begin{proof}
(i) We show that the Lax--Milgram theorem applies. Clearly, $B_1$ is a bounded bilinear form on $H^2_{\mathrm{per},0}(Y)$ and the right-hand side in \eqref{B1 prob} defines a bounded linear functional on $H^2_{\mathrm{per},0}(Y)$. It remains to show that $B_1$ is coercive on $H^2_{\mathrm{per},0}(Y)$. By Remark \ref{Rk: cons of LmmCor}, we have for any $\fhi \in H^2_{\mathrm{per},0}(Y)$ that
\begin{align*}
B_1(\fhi,\fhi) &= \|\Delta \fhi\|_{L^2(Y)}^2 + \int_Y \Delta \fhi \left(\tilde{A}:D^2 \fhi + \tilde{b}\cdot \nabla \fhi - \Delta \fhi \right) 
\\ &\geq (1-\sqrt{1-\kappa})\|\Delta \fhi\|_{L^2(Y)}^2,
\end{align*}
where $\kappa\in (0,1]$ is the constant from Lemma \ref{Lmm: cons of Cor}. The proof is concluded by observing that $1-\sqrt{1-\kappa}>0$ and that $\|\fhi\|_{H^2(Y)}\leq C \|\Delta \fhi\|_{L^2(Y)}$ for any $\fhi\in H^2_{\mathrm{per},0}(Y)$, where $C > 0$ is a constant. The fact that $\psi_c = c \psi_1$ follows from the linearity of $B_1$ in its first argument and the uniqueness of $\psi_c$.

(ii) Clearly, $\tilde{r}:=1-\Delta \psi_1 \in L^2_{\mathrm{per}}(Y)$ is a solution to \eqref{rt prob}. Conversely, any solution $\tilde{r}\in L^2_{\mathrm{per}}(Y)$ to \eqref{rt prob} can be written as $\tilde{r} = 1 - \Delta \psi$ for some unique $\psi\in H^2_{\mathrm{per},0}(Y)$. Clearly, $\psi$ must satisfy \eqref{B1 prob} with $c = 1$ and hence, by (i), $\psi = \psi_1$. We now show that $\tilde{r}\geq 0$ a.e. in $\R^n$, using a mollification argument similar to \cite{BRS12}. For $k\in \N$, we set 
\begin{align*}
\tilde{A}_{(k)}:= (\tilde{a}_{ij}\ast w_k)_{1\leq i,j\leq n}\in C^{\infty}_{\mathrm{per}}(Y;\R^{n\times n}_{\mathrm{sym}}),\qquad \tilde{b}_{(k)}:=(\tilde{b}_i\ast w_k)_{1\leq i\leq n}\in C^{\infty}_{\mathrm{per}}(Y;\R^{n}),
\end{align*}
where $w_k:=k^n w(k\,\cdot)$ for some $w\in C_c^{\infty}(\R^n)$ with $w\geq 0$ in $\R^n$ and $\int_{\R^n} w = 1$. 

Note that $\tilde{A}_{(k)}$ is uniformly elliptic. In particular, there exists a positive solution $\tilde{r}_k\in C^{\infty}_{\mathrm{per}}(Y;(0,\infty))$ to $-D^2:(\tilde{A}_{(k)}\tilde{r}_k) + \nabla\cdot (\tilde{b}_{(k)}\tilde{r}_k) = 0$ in $Y$ with $\int_Y \tilde{r}_k = 1$; see, e.g., \cite{BLP11}. Let $\psi_k\in H^2_{\mathrm{per},0}(Y)$ be such that $\Delta \psi_k = 1-\tilde{r}_k$. We are going to show that $\|\Delta \psi_k\|_{L^2(Y)}$ is uniformly bounded. To this end, first note that
\begin{align*}
\lvert I_n - \tilde{A}_{(k)}(y)\rvert^2 + \lvert \tilde{b}_{(k)}(y)\rvert^2 &= \left\lvert \int_{\R^n} [I_n - \tilde{A}(y-\cdot)]w_k \right\rvert^2 + \left\lvert \int_{\R^n} \tilde{b}(y-\cdot)w_k \right\rvert^2 \\
&\leq \int_{\R^n} \left(\lvert I_n - \tilde{A}(y-\cdot)\rvert^2 + \lvert \tilde{b}(y-\cdot)\rvert^2\right)w_k \leq 1-\delta
\end{align*}
for any $y\in \R^n$ by Lemma \ref{Lmm: cons of Cor}(i), and hence, the bound from Remark \ref{Rk: cons of LmmCor} still holds when $(\tilde{A},\tilde{b})$ is replaced by $(\tilde{A}_{(k)},\tilde{b}_{(k)})$. Then, we find that
\begin{align*}
(1-\sqrt{1-\kappa})\|\Delta \psi_k\|_{L^2(Y)}^2 &\leq \int_Y \Delta \psi_k\left(\tilde{A}_{(k)}:D^2 \psi_k + \tilde{b}_{(k)}\cdot \nabla \psi_k\right) \\ &= \int_Y \left(\tilde{A}_{(k)}:D^2 \psi_k+\tilde{b}_{(k)}\cdot\nabla \psi_k\right)\leq C\|\Delta \psi_k\|_{L^2(Y)}
\end{align*}
for some constant $C = C(n,\lambda,\Lambda,\|b\|_{L^\infty(Y)})>0$ independent of $k$, where we have used that $\lvert \tilde{b}_{(k)} \rvert \leq \|\tilde{b}\|_{L^\infty(Y;\R^n)}$ and $\lvert \tilde{A}_{(k)}\rvert \leq \sqrt{n}\frac{\Lambda}{\lambda}$ in $\R^n$ (as $\lvert \tilde{A}\rvert = \gamma \lvert A\rvert \leq \sqrt{n}\frac{\Lambda}{\lambda}$ a.e. in $\R^n$). In particular, $\|\Delta \psi_k\|_{L^2(Y)}$ is uniformly bounded, and hence, $\|\tilde{r}_k\|_{L^2(Y)} = \|1-\Delta \psi_k\|_{L^2(Y)}$ is uniformly bounded. 

Therefore, there exists an $\tilde{r}_0\in L^2_{\mathrm{per}}(Y)$ such that, upon passing to a subsequence (not indicated), $\tilde{r}_k\weak \tilde{r}_0$ weakly in $L^2(Y)$. There holds $\int_Y \tilde{r}_0 = 1$ and, using that $\tilde{A}_{(k)}\rightarrow \tilde{A}$ in the $L^2(Y;\R^{n\times n})$-norm and $\tilde{b}_{(k)}\rightarrow \tilde{b}$ in the $L^2(Y;\R^{n})$-norm as $k\rightarrow \infty$, we have  for any $\fhi\in C^{\infty}_{\mathrm{per}}(Y)$ that 
\begin{align*}
\int_Y \tilde{r}_0(\tilde{A}:D^2 \fhi + \tilde{b}\cdot \nabla \fhi) = \lim_{k\rightarrow \infty} \int_Y \tilde{r}_k(\tilde{A}_{(k)}:D^2 \fhi + \tilde{b}_{(k)}\cdot\nabla \fhi)  = 0,
\end{align*}
i.e., $\tilde{r}_0\in L^2_{\mathrm{per}}(Y)$ is a solution to \eqref{rt prob}. Since $\tilde{r}$ is the unique solution to \eqref{rt prob} in $L^2_{\mathrm{per}}(Y)$, and since $\tilde{r}_k > 0$ in $\R^n$, we conclude that $\tilde{r}=\tilde{r}_0\geq 0$ a.e. in $\R^n$.

(iii) In view of our discussion above Theorem \ref{Thm: ana in set B}, and the results (i)--(ii), it is easy to confirm that the set $\tilde{\mathbb{L}}$ of all solutions in $L^2_{\mathrm{per}}(Y)$ to the renormalized FPK problem \eqref{q problem renorm} is given by 
\begin{align*}
\tilde{\mathbb{L}} = \{c-\Delta \psi_c \,|\, c\in \R\} = \{c(1-\Delta \psi_1)\,|\, c\in \R\} = \{c\tilde{r}\,|\, c\in \R\}.
\end{align*}
By Remark \ref{Rk: rel ren orig}, it follows that $\mathbb{L} = \{\gamma \tilde{u}\,|\,\tilde{u}\in \tilde{\mathbb{L}}\} = \{\left.c\gamma\tilde{r}\,\right\rvert c\in \R\}$.
\end{proof}

We immediately obtain the following consequences:

\begin{corollary}[Invariant measure in setting $\calB$]\label{Cor: r in setB}

For any $(A,b)\in \calB$, there exists a unique solution $r\in L^2_{\mathrm{per}}(Y)$ to 
\begin{align}\label{r prob setB}
-D^2:\left(Ar\right) + \nabla\cdot \left( br\right) = 0\quad\text{in } Y,\qquad r\text{ is $Y$-periodic},\qquad \int_Y r = 1,
\end{align}
and there holds $r\geq 0$ a.e. in $\R^n$. Further, we can obtain that 
\begin{align*}
r = \frac{1}{\int_Y \gamma\tilde{r}}\gamma \tilde{r}
\end{align*}
with $\gamma$ defined in \eqref{gam} and $\tilde{r}$ defined in Theorem \ref{Thm: ana in set B}(ii). 
\end{corollary}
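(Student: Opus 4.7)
The plan is to deduce Corollary 1.1 as a direct consequence of Theorem 2.2(iii), combined with the positivity bound on the renormalization function $\gamma$ recorded in Remark 2.1. The strategy is to parametrize all $L^2$-solutions of \eqref{r prob setB} via Theorem 2.2(iii) and then pin down the unique $c\in\mathbb{R}$ compatible with the normalization $\int_Y r = 1$.

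First I would invoke Theorem 2.2(iii), which asserts that the set $\mathbb{L}$ of solutions to \eqref{q problem} in $L^2_{\mathrm{per}}(Y)$ is exactly $\{c\gamma\tilde{r} \mid c\in\mathbb{R}\}$. Since \eqref{r prob setB} is just \eqref{q problem} together with the normalization $\int_Y r = 1$, the problem reduces to choosing $c$ so that $c\int_Y \gamma\tilde{r} = 1$. The main (very small) obstacle is to guarantee that $\int_Y \gamma\tilde{r}$ is strictly positive, so that such a $c$ exists and is unique. But from Remark 2.1 we have $\gamma \geq \gamma_0 > 0$ a.e., and from Theorem 2.2(ii) we have $\tilde{r}\geq 0$ a.e. with $\int_Y \tilde{r} = 1$, so
\begin{equation*}
\int_Y \gamma\tilde{r} \;\geq\; \gamma_0 \int_Y \tilde{r} \;=\; \gamma_0 \;>\; 0.
\end{equation*}
Therefore the unique admissible constant is $c = \bigl(\int_Y \gamma\tilde{r}\bigr)^{-1}$, and we set
\begin{equation*}
r \;:=\; \frac{1}{\int_Y \gamma\tilde{r}}\,\gamma\tilde{r},
\end{equation*}
which belongs to $L^2_{\mathrm{per}}(Y)$ since $\gamma\in L^\infty_{\mathrm{per}}(Y)$ and $\tilde{r}\in L^2_{\mathrm{per}}(Y)$.

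Existence and uniqueness of $r$ then follow at once: existence is the verification that this $r$ lies in $\mathbb{L}$ and satisfies $\int_Y r = 1$, while uniqueness follows because any other solution $r'\in L^2_{\mathrm{per}}(Y)$ of \eqref{r prob setB} would, by Theorem 2.2(iii), have the form $r' = c'\gamma\tilde{r}$ for some $c'\in\mathbb{R}$, and the normalization $\int_Y r' = 1$ forces $c' = \bigl(\int_Y \gamma\tilde{r}\bigr)^{-1}$. Finally, the nonnegativity $r\geq 0$ a.e.\ in $\mathbb{R}^n$ is immediate from $\gamma > 0$ a.e., $\tilde{r}\geq 0$ a.e., and the positivity of the chosen constant $c$. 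No additional regularity or approximation argument is required, so the whole proof is essentially a bookkeeping of Theorem 2.2 and Remark 2.1.
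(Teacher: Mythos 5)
Your argument is correct and is exactly the bookkeeping the paper leaves implicit when it states this corollary as an immediate consequence of Theorem \ref{Thm: ana in set B}: parametrize all solutions as $c\gamma\tilde{r}$ via part (iii), use $\gamma\geq\gamma_0>0$ (Remark \ref{Rk: prop gam}) together with $\tilde{r}\geq 0$ and $\int_Y\tilde{r}=1$ (part (ii)) to see that $\int_Y\gamma\tilde{r}>0$, and solve for the unique normalizing constant. The only step the paper does not spell out, the strict positivity of $\int_Y\gamma\tilde{r}$, is handled correctly in your proposal.
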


\subsubsection{Finite element approximation of \eqref{rt prob}}\label{Sec: FEM rt}

Let us recall from Theorem \ref{Thm: ana in set B} that $u\in L^2_{\mathrm{per}}(Y)$ is a solution to \eqref{q problem}, if and only if $u = c\gamma \tilde{r}$ for some $c\in \R$. Hence, once we know the unique solution $\tilde{r}\in L^2_{\mathrm{per}}(Y)$ to \eqref{rt prob}, we know all solutions to \eqref{q problem}. We now discuss the finite element approximation of $\tilde{r}$, inspired by ideas from \cite{Spr24}.

One could approximate $\tilde{r}$ via an $H^2_{\mathrm{per},0}(Y)$-conforming finite element method for the approximation of $\psi_1\in H^2_{\mathrm{per},0}(Y)$ from Theorem \ref{Thm: ana in set B}(i). More conveniently, we can avoid using $H^2$-conforming methods by using the following scheme based on an $H^1_{\mathrm{per},0}(Y;\R^n)$-conforming finite element method to approximate an auxiliary function $\rho\in H^1_{\mathrm{per},0}(Y;\R^n)$ satisfying $\nabla\cdot \rho = \Delta \psi_1$. To this end, we introduce the bilinear form $B_2(\cdot,\cdot):H^1_{\mathrm{per},0}(Y;\R^n)\times H^1_{\mathrm{per},0}(Y;\R^n)\rightarrow \R$ given by
\begin{align*}
B_2(v,w) := \int_Y (\nabla\cdot v) ( \tilde{A}:Dw + \tilde{b}\cdot w) + \frac{1}{2} \int_Y (Dv-(Dv)^{\mathrm{T}}): (Dw-(Dw)^{\mathrm{T}})
\end{align*}
for $v,w\in H^1_{\mathrm{per},0}(Y;\R^n)$. Note that $B_2(\nabla \fhi_1,\nabla \fhi_2) = B_1(\fhi_1,\fhi_2)$ for any $\fhi_1,\fhi_2\in H^2_{\mathrm{per},0}(Y)$. The second integral in the definition of $B_2$ is added to make $B_2$ coercive.

\begin{lemma}[Characterization of $\tilde{r}$]\label{Lmm: char of rt}
Let $(A,b)\in \calB$, and let $(\tilde{A},\tilde{b})$ be the pair of renormalized coefficients given by \eqref{gam}, \eqref{At Bt}. Let $B_2:H^1_{\mathrm{per},0}(Y;\R^n)\times H^1_{\mathrm{per},0}(Y;\R^n)\rightarrow \R$ be defined as above. Then, $B_2$ is a bounded and coercive bilinear form on $H^1_{\mathrm{per},0}(Y;\R^n)$. In particular, there exists a unique $\rho\in H^1_{\mathrm{per},0}(Y;\R^n)$ such that
\begin{align}\label{p prob}
B_2(\rho,w) = \int_Y \left(\tilde{A}:Dw + \tilde{b}\cdot w\right)\qquad \forall w\in H^1_{\mathrm{per},0}(Y;\R^n),
\end{align}
and the unique solution $\tilde{r}\in L^2_{\mathrm{per}}(Y)$ to \eqref{rt prob} is given by $\tilde{r} = 1- \nabla\cdot \rho$.
\end{lemma}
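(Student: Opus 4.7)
The plan is to establish that $B_2$ is bounded and coercive on $H^1_{\mathrm{per},0}(Y;\R^n)$, invoke the Lax--Milgram theorem to produce $\rho$, and then verify $\tilde{r} = 1-\nabla\cdot\rho$ by testing \eqref{p prob} against $w=\nabla\fhi$ for $\fhi\in H^2_{\mathrm{per}}(Y)$. Boundedness is straightforward: Remark \ref{Rk: prop gam} shows that $\tilde{A}$ and $\tilde{b}$ are essentially bounded, and combined with the Poincaré inequality \eqref{Poin} on $H^1_{\mathrm{per},0}(Y;\R^n)$, Cauchy--Schwarz applied to each term in the definition of $B_2$ gives $\lvert B_2(v,w)\rvert \leq C\|v\|_{H^1(Y)}\|w\|_{H^1(Y)}$ for some constant $C>0$.

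The main obstacle is coercivity, for which I would exploit the periodic integration-by-parts identity
\[
\int_Y \lvert Dw\rvert^2 = \int_Y \lvert \nabla\cdot w\rvert^2 + \tfrac{1}{2}\int_Y \bigl\lvert Dw-(Dw)^{\mathrm{T}}\bigr\rvert^2, \qquad w\in H^1_{\mathrm{per}}(Y;\R^n),
\]
obtained by twice integrating $\partial_i w_i\,\partial_j w_j$ by parts and discarding boundary terms by periodicity to reduce it to $\int_Y \partial_j w_i\,\partial_i w_j = \int_Y Dw:(Dw)^{\mathrm{T}}$. Adding and subtracting $\int_Y(\nabla\cdot w)^2$ inside $B_2(w,w)$ and applying this identity yields
\[
B_2(w,w) = \int_Y \lvert Dw\rvert^2 + \int_Y (\nabla\cdot w)\bigl(\tilde{A}:Dw+\tilde{b}\cdot w - \nabla\cdot w\bigr).
\]
Bounding the last integral by Cauchy--Schwarz, Lemma \ref{Lmm: cons of Cor}(ii), and the inequality $\|\nabla\cdot w\|_{L^2(Y)}\leq \|Dw\|_{L^2(Y;\R^{n\times n})}$ (which is again an immediate consequence of the same identity), I obtain $B_2(w,w)\geq (1-\sqrt{1-\kappa})\|Dw\|_{L^2(Y;\R^{n\times n})}^2$. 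Since $\kappa\in (0,1]$ the prefactor is strictly positive, and \eqref{Poin} upgrades this to coercivity with respect to the full $H^1(Y;\R^n)$-norm.

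With $B_2$ bounded and coercive, and the right-hand side of \eqref{p prob} defining a bounded linear functional on $H^1_{\mathrm{per},0}(Y;\R^n)$, the Lax--Milgram theorem delivers a unique $\rho$. To identify $u:=1-\nabla\cdot\rho\in L^2_{\mathrm{per}}(Y)$ with $\tilde{r}$, I note that $\int_Y u = 1$ by periodicity of $\rho$. For any $\fhi\in H^2_{\mathrm{per}}(Y)$, the choice $w:=\nabla\fhi$ lies in $H^1_{\mathrm{per},0}(Y;\R^n)$ (mean zero by periodicity), and since $D\nabla\fhi = D^2\fhi$ is symmetric, the skew-symmetric contribution in $B_2(\rho,\nabla\fhi)$ vanishes, reducing \eqref{p prob} to
\[
\int_Y (\nabla\cdot\rho)\bigl(\tilde{A}:D^2\fhi+\tilde{b}\cdot\nabla\fhi\bigr) = \int_Y \bigl(\tilde{A}:D^2\fhi+\tilde{b}\cdot\nabla\fhi\bigr).
\]
Subtracting gives $\int_Y u\bigl(\tilde{A}:D^2\fhi+\tilde{b}\cdot\nabla\fhi\bigr)=0$ for every $\fhi\in H^2_{\mathrm{per}}(Y)$, so $u$ is a very weak solution of \eqref{rt prob} with mean one; the uniqueness statement in Theorem \ref{Thm: ana in set B}(ii) therefore forces $u=\tilde{r}$.
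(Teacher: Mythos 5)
Your proposal is correct and follows essentially the same route as the paper: the same integration-by-parts identity $\|\nabla\cdot w\|_{L^2}^2+\tfrac12\|Dw-(Dw)^{\mathrm{T}}\|_{L^2}^2=\|Dw\|_{L^2}^2$ for periodic fields, the same reduction of $B_2(w,w)$ to $\|Dw\|_{L^2}^2$ plus a remainder controlled by Lemma \ref{Lmm: cons of Cor}(ii), Lax--Milgram, and the identification of $\tilde r$ by testing with $w=\nabla\fhi$ and invoking uniqueness from Theorem \ref{Thm: ana in set B}(ii). No gaps.
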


\begin{proof}
First, we show that $B_2$ is coercive.  Using that for any $w\in H^1_{\mathrm{per},0}(Y;\mathbb{R}^n)$, we have that
\begin{align}\label{rel Dw}
\|\nabla\cdot w\|_{L^2(Y)}^2 + \frac{1}{2}\|Dw-(Dw)^{\mathrm{T}}\|_{L^2(Y;\R^{n\times n})}^2 = \|Dw\|_{L^2(Y;\R^{n\times n})}^2,
\end{align}
and using Lemma \ref{Lmm: cons of Cor}(ii), we find that for any $w\in H^1_{\mathrm{per},0}(Y;\mathbb{R}^n)$ there holds
\begin{align*}
B_2(w,w) &= \|D w\|_{L^2(Y;\R^{n\times n})}^2 + \int_Y (\nabla\cdot w)\left(\tilde{A}:Dw +\tilde{b}\cdot w - \nabla\cdot w\right) \\
&\geq (1-\sqrt{1-\kappa})\|Dw\|_{L^2(Y;\R^{n\times n})}^2,
\end{align*} 
where $\kappa = \kappa(\delta,n)\in (0,1]$ is the constant from Lemma \ref{Lmm: cons of Cor}. By noting that $1-\sqrt{1-\kappa}>0$ and that $w\mapsto\|Dw\|_{L^2(Y;\R^{n\times n})}$ defines a norm on $H^1_{\mathrm{per},0}(Y;\R^n)$, this concludes the proof of coercivity of $B_2$. 

Next, we show boundedness of $B_2$. Using that by \eqref{gam}, \eqref{At Bt} and Remark \ref{Rk: prop gam}, we can get that
\begin{align*}
\left\lvert \tilde{A}\right\rvert^2 + \left\lvert \tilde{b}\right\rvert^2 = \gamma^2(\lvert A\rvert^2 + \lvert b\rvert^2) = \gamma\, \mathrm{tr}(A)\leq \frac{\Lambda^2}{\lambda^2}n\quad\text{a.e. in }\R^n,
\end{align*}
and using \eqref{Poin} and \eqref{rel Dw}, we have for any $w_1,w_2\in H^1_{\mathrm{per},0}(Y;\mathbb{R}^n)$ that
\begin{align*}
\lvert B_2(w_1,w_2)\rvert &\leq  \frac{\Lambda}{\lambda}\sqrt{n}\,\|\nabla\cdot w_1\|_{L^2(Y)}\sqrt{\|D w_2\|_{L^2(Y;\R^{n\times n})}^2 + \|w_2\|_{L^2(Y;\R^n)}^2} \\ &\qquad+ \|D w_1\|_{L^2(Y;\R^{n\times n})}\|D w_2\|_{L^2(Y;\R^{n\times n})}\\ &\leq \left(1+\frac{\Lambda}{\lambda}\sqrt{n}\sqrt{1+\frac{n}{\pi^2}}\right)\|D w_1\|_{L^2(Y;\R^{n\times n})}\|D w_2\|_{L^2(Y;\R^{n\times n})},
\end{align*} 
which concludes the proof of the boundedness of $B_2$.

Since the right-hand side in \eqref{p prob} defines a bounded linear functional on $H^1_{\mathrm{per},0}(Y;\R^n)$, we have by the Lax--Milgram theorem that \eqref{p prob} has a unique solution $\rho\in H^1_{\mathrm{per},0}(Y;\R^n)$. Finally, noting that $1-\nabla\cdot \rho\in L^2_{\mathrm{per}}(Y)$, that $\int_Y (1-\nabla\cdot \rho) = 1$, and that for any $\fhi\in H^2_{\mathrm{per},0}(Y)$, there holds 
\begin{align*}
\int_Y (1-\nabla\cdot \rho)\left(\tilde{A}:D^2 \fhi + \tilde{b}\cdot \nabla \fhi\right) = \int_Y \left(\tilde{A}:D^2 \fhi + \tilde{b}\cdot \nabla \fhi\right) - B_2(\rho,\nabla \fhi) = 0
\end{align*}
by \eqref{p prob} with $w = \nabla \fhi$, we conclude that $\tilde{r} = 1-\nabla\cdot \rho$ by uniqueness of $\tilde{r}$.
\end{proof}

In view of Lemma \ref{Lmm: char of rt}, we immediately obtain the following simple method to approximate $\tilde{r}$.

\begin{theorem}[Finite element approximation of \eqref{rt prob}]\label{Thm: FEM rtilde}

Suppose that the assumptions of Lemma \ref{Lmm: char of rt} hold. Further, let $P_h$ be a closed linear subspace of $H^1_{\mathrm{per},0}(Y;\mathbb{R}^n)$. Then, there exists a unique $\rho_h\in P_h$ such that 
\begin{align*}
B_2(\rho_h,w_h) = \int_Y \left(\tilde{A}:Dw_h + \tilde{b}\cdot w_h\right)\qquad \forall w_h\in P_h,
\end{align*}
and setting $\tilde{r}_h:=1-\nabla\cdot \rho_h\in L^2_{\mathrm{per}}(Y)$, there holds
\begin{align}\label{err bd rt approx}
\|\tilde{r}-\tilde{r}_h\|_{L^2(Y)} \leq C \inf_{w_h\in P_h}\|D(\rho-w_h)\|_{L^2(Y;\R^{n\times n})}
\end{align}
for some constant $C = C(\lambda,\Lambda,\delta,n)>0$.
\end{theorem}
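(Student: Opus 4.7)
The plan is to view this as a standard Galerkin approximation of the Lax--Milgram problem \eqref{p prob} on the closed subspace $P_h\subset H^1_{\mathrm{per},0}(Y;\R^n)$. Since Lemma \ref{Lmm: char of rt} already establishes that $B_2$ is bounded and coercive on $H^1_{\mathrm{per},0}(Y;\R^n)$ with respect to the seminorm $w\mapsto \|Dw\|_{L^2(Y;\R^{n\times n})}$ (which is a norm on this space by Poincar\'e's inequality \eqref{Poin}), these properties are inherited by $B_2$ restricted to $P_h\times P_h$. The right-hand side defines a bounded linear functional on $P_h$, so the Lax--Milgram theorem applied on $P_h$ gives a unique $\rho_h\in P_h$ satisfying the discrete problem.

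For the error bound, the key observation is \emph{Galerkin orthogonality}: subtracting the discrete problem from \eqref{p prob} yields
\begin{equation*}
B_2(\rho-\rho_h,w_h) = 0 \qquad \forall w_h\in P_h.
\end{equation*}
I would then invoke C\'ea's lemma: for any $w_h\in P_h$,
\begin{equation*}
\alpha_* \|D(\rho-\rho_h)\|_{L^2(Y;\R^{n\times n})}^2 \leq B_2(\rho-\rho_h,\rho-\rho_h) = B_2(\rho-\rho_h,\rho-w_h) \leq C_* \|D(\rho-\rho_h)\|_{L^2(Y;\R^{n\times n})}\|D(\rho-w_h)\|_{L^2(Y;\R^{n\times n})},
\end{equation*}
where $\alpha_* = 1-\sqrt{1-\kappa}>0$ is the coercivity constant and $C_* = 1+\frac{\Lambda}{\lambda}\sqrt{n}\sqrt{1+n/\pi^2}$ is the boundedness constant from the proof of Lemma \ref{Lmm: char of rt}. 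Dividing and taking the infimum over $w_h\in P_h$ gives
\begin{equation*}
\|D(\rho-\rho_h)\|_{L^2(Y;\R^{n\times n})} \leq \frac{C_*}{\alpha_*}\inf_{w_h\in P_h}\|D(\rho-w_h)\|_{L^2(Y;\R^{n\times n})}.
\end{equation*}

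Finally, since $\tilde{r}-\tilde{r}_h = -\nabla\cdot(\rho-\rho_h)$, the pointwise inequality $|\nabla\cdot v|\leq \sqrt{n}\,|Dv|$ (or equivalently the identity \eqref{rel Dw}) gives
\begin{equation*}
\|\tilde{r}-\tilde{r}_h\|_{L^2(Y)} = \|\nabla\cdot(\rho-\rho_h)\|_{L^2(Y)} \leq \|D(\rho-\rho_h)\|_{L^2(Y;\R^{n\times n})},
\end{equation*}
which combined with the C\'ea-type bound yields \eqref{err bd rt approx} with $C = C_*/\alpha_*$, a constant depending only on $\lambda,\Lambda,\delta,n$. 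There is no serious obstacle here: the whole argument is a textbook Galerkin estimate, and all the analytical work has already been absorbed into the boundedness and coercivity of $B_2$ established in Lemma \ref{Lmm: char of rt}.
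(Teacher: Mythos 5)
Your proposal is correct and follows exactly the route the paper intends: Lax--Milgram on $P_h$, Galerkin orthogonality with C\'ea's lemma using the coercivity and boundedness constants from Lemma \ref{Lmm: char of rt}, and the bound $\|\nabla\cdot v\|_{L^2(Y)}\leq \|Dv\|_{L^2(Y;\R^{n\times n})}$ from \eqref{rel Dw}; you even recover the paper's stated constant $C = \frac{1+({\Lambda}/{\lambda})\sqrt{n}\sqrt{1+(n/\pi^2)}}{1-\sqrt{1-\kappa}}$. (Only a cosmetic point: the pointwise bound $\lvert\nabla\cdot v\rvert\leq\sqrt{n}\,\lvert Dv\rvert$ is not equivalent to \eqref{rel Dw}, but since you ultimately use the latter, the final estimate is as stated.)
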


\begin{proof}
This follows immediately from the statement and proof of Lemma \ref{Lmm: char of rt} together with a standard Galerkin orthogonality argument. The constant $C>0$ in \eqref{err bd rt approx} can be taken as $C := \frac{1+({\Lambda}/{\lambda})\sqrt{n}\sqrt{1+(n/\pi^2)}}{1-\sqrt{1-\kappa}}$, where $\kappa = \kappa(\delta,n)\in (0,1]$ is the constant from Lemma \ref{Lmm: cons of Cor}. 
\end{proof}

\section{Remarks on Nonhomogeneous Stationary FPK-type Problems}\label{Sec: NonhomFPK}

In this section, we briefly discuss the finite element approximation of nonhomogeneous stationary FPK-type problems subject to periodic boundary conditions, as the ideas from the previous section can be straightforwardly extended to this problem class. For $F\in L^2_{\mathrm{per}}(Y;\R^n)$, let us consider the problem
\begin{align}\label{q problem inhom}
-D^2:(Au) + \nabla\cdot (bu) = \nabla\cdot F \quad\text{in } Y,\qquad u\text{ is $Y$-periodic},
\end{align} 
where $Y:=(0,1)^n$ denotes the unit cell in $\R^n$, and $(A,b)\in \mathcal{A}$ or $(A,b)\in \mathcal{B}$. Recall the settings  $\calA$ (higher regularity) and $\calB$ (Cordes-type) from Section \ref{Sec: Hom FPK}.

\subsection{Setting $\calA$}

First, we note that when $(A,b)\in\calA$, we can rewrite the problem \eqref{q problem inhom} in divergence-form thanks to \eqref{AinW1p}, i.e.,
\begin{align*}
-\nabla\cdot\left(A\nabla u + (\mathrm{div}(A)-b) u\right) = \nabla\cdot F\quad\text{in } Y,\qquad u\text{ is $Y$-periodic}.
\end{align*}
Regarding the uniqueness of solutions, we know from Proposition \ref{Prop: r} that if a solution $u\in H^1_{\mathrm{per}}(Y)$ to \eqref{q problem inhom} exists, then it is unique up to the addition of constant multiples of $r$, where $r$ denotes the unique solution to \eqref{r problem}. Regarding the existence of solutions to \eqref{q problem inhom}, the following result is known in Setting $\calA$; see, e.g., \cite{GT01}.

\begin{proposition}[Well-posedness of \eqref{q problem inhom} in setting $\calA$]\label{Prop: FPK inhom A}
Let $(A,b)\in \mathcal{A}$ and $F\in L^2_{\mathrm{per}}(Y;\R^n)$. Then, there exists a solution $u\in H^1_{\mathrm{per}}(Y)$ to \eqref{q problem inhom}, and $u$ is unique up to the addition of a constant multiple of the unique solution $r$ to \eqref{r problem}.
\end{proposition}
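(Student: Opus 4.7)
The plan is to recast \eqref{q problem inhom} in divergence form as $-\nabla\cdot(A\nabla u + (\div(A)-b)u) = \nabla\cdot F$, which is permitted by \eqref{AinW1p}, and then apply the Fredholm alternative to the bilinear form $a$ from \eqref{a bil form}. I would first observe that the natural weak formulation asks for $u\in H^1_{\mathrm{per}}(Y)$ with
\begin{equation*}
\int_Y A\nabla u\cdot\nabla v + \int_Y u(\div(A)-b)\cdot\nabla v = -\int_Y F\cdot\nabla v\qquad\forall v\in H^1_{\mathrm{per}}(Y),
\end{equation*}
and that both sides vanish when $v$ is constant, so this is equivalent to the same identity restricted to $v\in H^1_{\mathrm{per},0}(Y)$. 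It therefore suffices to produce a particular solution $w\in H^1_{\mathrm{per},0}(Y)$ of $a(w,v) = -\int_Y F\cdot\nabla v$ on $H^1_{\mathrm{per},0}(Y)$; once such a $w$ exists, the uniqueness statement of Proposition \ref{Prop: r} immediately yields that the full solution set in $H^1_{\mathrm{per}}(Y)$ is $\{w + cr : c\in\R\}$, giving both existence and the stated uniqueness up to multiples of $r$.

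To produce $w$ I would appeal to the Fredholm alternative. The G{\aa}rding inequality \eqref{a Garding} together with the compact embedding $H^1_{\mathrm{per},0}(Y)\hookrightarrow L^2_{\mathrm{per}}(Y)$ shows that the operator $\calL\colon H^1_{\mathrm{per},0}(Y)\to H^1_{\mathrm{per},0}(Y)^\ast$ induced by $a$ is a compact perturbation of a coercive isomorphism, hence Fredholm of index zero. Since $v\mapsto -\int_Y F\cdot\nabla v$ is a bounded linear functional on $H^1_{\mathrm{per},0}(Y)$, the existence of $w$ reduces to verifying that $\calL$ is injective.

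For the injectivity, which is the core of the argument, suppose $u\in H^1_{\mathrm{per},0}(Y)$ satisfies $a(u,v) = 0$ for all $v\in H^1_{\mathrm{per},0}(Y)$. The identity extends trivially to constant $v$, and hence holds for every $v\in H^1_{\mathrm{per}}(Y)$, so $u$ is a weak $H^1_{\mathrm{per}}(Y)$-solution of the homogeneous FPK problem \eqref{q problem}. Proposition \ref{Prop: r} then forces $u = cr$ for some $c\in\R$, and combining $\int_Y u = 0$ with $\int_Y r = 1$ yields $c=0$, hence $u=0$. The main obstacle is precisely this injectivity step, where one must verify that the homogeneous equation at the level of $H^1_{\mathrm{per},0}(Y)$ really falls under the hypotheses of Proposition \ref{Prop: r}; everything else is standard Fredholm machinery building on the earlier well-posedness and G{\aa}rding estimates.
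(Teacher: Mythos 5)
Your argument is correct and complete. Note that the paper does not actually prove this proposition: it observes that the uniqueness statement follows from Proposition \ref{Prop: r} (exactly as you argue at the end) and then cites the literature for existence. Your Fredholm-alternative argument supplies that missing existence proof using only ingredients already established in the paper: the divergence-form rewriting and the bilinear form $a$ from \eqref{a bil form}, the G{\aa}rding inequality \eqref{a Garding} combined with the Rellich compact embedding to conclude that the induced operator on $H^1_{\mathrm{per},0}(Y)$ is Fredholm of index zero, and Proposition \ref{Prop: r} together with $\int_Y r = 1 \neq 0$ to verify injectivity on the mean-zero subspace. The reduction of the test space to $H^1_{\mathrm{per},0}(Y)$ (both sides vanish on constants) and the identification of the full solution set as $\{w + cr : c \in \R\}$ are both handled correctly, so this is a valid, self-contained replacement for the paper's citation.
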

To have a unique solution, let us now restrict our attention to the unique solution $u_{0}\in H^1_{\mathrm{per}}(Y)$ to the following problem 
\begin{align}\label{q0 problem}
-\nabla\cdot\left(A\nabla u_{0} + (\mathrm{div}(A)-b) u_{0}\right) = \nabla\cdot F\;\text{ in } Y,\;\; u_{0}\text{ is $Y$-periodic},\;\; \int_Y u_{0} = 0,
\end{align}
whose existence and uniqueness follows from Proposition \ref{Prop: FPK inhom A}. Then, arguing as in Section \ref{Sec: FEM r Set A}, we obtain the following approximation result, the proof of which is omitted.

\begin{theorem}[Finite element approximation of \eqref{q0 problem}]
Let $(A,b)\in \calA$ and $F\in L^2_{\mathrm{per}}(Y;\R^n)$. Let $u_0\in H^1_{\mathrm{per},0}(Y)$ denote the unique solution to \eqref{q0 problem}, and let $a:H^1_{\mathrm{per},0}(Y)\times H^1_{\mathrm{per},0}(Y) \rightarrow \R$ denote the bilinear form from \eqref{a bil form}. Then, there exists a constant $C_0 > 0$ such that for any $\alpha \in (0,C_0)$ it is true that if $R_h$ is a finite-dimensional closed linear subspace of $H^1_{\mathrm{per},0}(Y)$ with the property
\begin{align*}
\inf_{v_h\in R_h}\frac{\|\psi-v_h\|_{H^1(Y)}}{\|\psi\|_{H^2(Y)}}\leq \alpha\qquad \forall \psi\in H^2_{\mathrm{per},0}(Y)\backslash\{0\},
\end{align*}
then there exists a unique $u_h\in R_h$ such that $a(u_h,v_h) = -\int_Y F\cdot \nabla v_h$ for all $v_h\in  R_h$, and there holds
\begin{align*}
\|u_0-u_h\|_{L^2(Y)} + \alpha \|u_0-u_h\|_{H^1(Y)} \leq C\alpha \inf_{v_h\in R_h} \|u_0-v_h\|_{H^1(Y)}
\end{align*}
for some constant $C>0$ depending only on $(A,b)$ and $n$. 
\end{theorem}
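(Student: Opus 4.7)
The plan is to transplant the Schatz-type argument from the proof of \Cref{Thm: Approx r in set A} essentially verbatim, since the bilinear form $a$ is the same and only the right-hand side has changed. First I would record the weak formulation: testing \eqref{q0 problem} against $v\in H^1_{\mathrm{per},0}(Y)$ and integrating by parts yields $a(u_0,v) = -\int_Y F\cdot\nabla v$, so the discrete problem is Galerkin-consistent in the sense that $a(u_0-u_h,v_h)=0$ for every $v_h\in R_h$.

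For uniqueness of $u_h$, let $u_h^{(1)},u_h^{(2)}\in R_h$ both solve the discrete equation and set $z_h:=u_h^{(1)}-u_h^{(2)}\in R_h\subset H^1_{\mathrm{per},0}(Y)$. Since $a(z_h,v_h)=0$ for all $v_h\in R_h$, the G{\aa}rding inequality \eqref{a Garding} gives
\begin{align*}
\|z_h\|_{H^1(Y)}^2 \leq \frac{2C_2}{\lambda}\|z_h\|_{L^2(Y)}\|z_h\|_{H^1(Y)}.
\end{align*}
Because $\int_Y z_h=0$, the compatibility condition $\int_Y (z_h/r)r=0$ holds, and \Cref{Prop: r} produces $\psi\in H^2_{\mathrm{per},0}(Y)$ with $a(v,\psi)=\int_Y (z_h/r)v$ for all $v\in H^1_{\mathrm{per},0}(Y)$, satisfying $\|\psi\|_{H^2(Y)}\lesssim\|z_h/r\|_{L^2(Y)}\lesssim\|z_h\|_{L^2(Y)}$. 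The Aubin--Nitsche step from \eqref{zhbd}, combined with Galerkin orthogonality, the bound \eqref{a bddness} and the hypothesis \eqref{ch of psi defn}, yields $\|z_h\|_{L^2(Y)}\lesssim \alpha\|z_h\|_{H^1(Y)}$; choosing $C_0$ exactly as in \eqref{C0 choice} forces $z_h=0$. Existence then comes for free from the fact that $R_h$ is finite-dimensional.

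For the error bound, I would mimic \eqref{rh-rhh bd}--\eqref{L2 by H1}. G{\aa}rding together with Galerkin orthogonality and boundedness of $a$ yields
\begin{align*}
\tfrac{\lambda}{2}\|u_0-u_h\|_{H^1(Y)}^2 \leq \bigl(C_1\!\inf_{v_h\in R_h}\|u_0-v_h\|_{H^1(Y)}+C_2\|u_0-u_h\|_{L^2(Y)}\bigr)\|u_0-u_h\|_{H^1(Y)}.
\end{align*}
Applying the duality step of the uniqueness argument to $u_0-u_h$ (which has mean zero, so the dual problem is again solvable) gives $\|u_0-u_h\|_{L^2(Y)}\lesssim\alpha\|u_0-u_h\|_{H^1(Y)}$. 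Substituting this back and using $\alpha<C_0$ to absorb the resulting $L^2$ term into the left-hand side yields the desired $H^1$ near-best approximation estimate, and then the duality bound promotes this to the claimed $L^2$ bound with the extra factor of $\alpha$.

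There is no genuine obstacle here: the argument is entirely formal once one observes that (a) the bilinear form $a$ and hence the G{\aa}rding inequality \eqref{a Garding} are independent of the right-hand side, (b) $u_0\in H^1_{\mathrm{per},0}(Y)$ and $R_h\subset H^1_{\mathrm{per},0}(Y)$, so all differences are mean-zero and the compatibility condition of the dual problem in \Cref{Prop: r} is automatic, and (c) the dependence of the constants on $(A,b)$ and $n$ is inherited from the earlier proof without modification. The only point requiring minor care is checking that the threshold $C_0$ can be taken to be the same as in \eqref{C0 choice}, which it can since it depends only on $\lambda$, $C_1$, $C_2$, $C_3$ and the pointwise bounds $r_0,r_1$ on $r$, none of which involve $F$.
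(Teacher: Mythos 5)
Your proposal is correct and follows exactly the route the paper intends: the authors omit the proof precisely because it is the Schatz-type argument of Theorem \ref{Thm: Approx r in set A} repeated verbatim with the new right-hand side, which is what you carry out. Your observations (a)--(c), in particular that the mean-zero property of $u_0-u_h$ makes the compatibility condition for the dual problem automatic and that the threshold $C_0$ from \eqref{C0 choice} is independent of $F$, are exactly the points that justify the transplantation.
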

 
If desired, $L^p$ approximation estimates can be derived similarly to Theorem \ref{Thm: Lp est r} under suitable regularity assumptions on $F$.

\subsection{Setting $\calB$}

When $(A,b)\in \calB$, we consider the renormalized problem
\begin{align}\label{q problem inhom ren}
-D^2:(\tilde{A}\tilde{u}) + \nabla\cdot (\tilde{b}\tilde{u}) = \nabla\cdot F \quad\text{in } Y,\qquad \tilde{u}\text{ is $Y$-periodic},
\end{align} 
where the pair of renormalized coefficients $(\tilde{A},\tilde{b})$ is given by \eqref{gam}, \eqref{At Bt}. Similarly to Remark \ref{Rk: rel ren orig}, it is clear that $u\in L^2_{\mathrm{per}}(Y)$ is a solution to the original problem \eqref{q problem inhom} if and only if there is a solution $\tilde{u}\in L^2_{\mathrm{per}}(Y)$ to \eqref{q problem inhom ren} such that $u = \gamma\tilde{u}$, where $\gamma$ is defined in \eqref{gam}. Hence, from now on we focus on the renormalized problem \eqref{q problem inhom ren}.

Regarding uniqueness of solutions, we know from Theorem \ref{Thm: ana in set B} that if a solution $\tilde{u}\in L^2_{\mathrm{per}}(Y)$ to \eqref{q problem inhom ren} exists, then it is unique up to the addition of a constant multiple of $\tilde{r}$, where $\tilde{r}\in L^2_{\mathrm{per}}(Y)$ denotes the unique solution to \eqref{rt prob}. In order to have at most one solution, we therefore focus our attention on the more restricted problem 
\begin{align}\label{qt0 problem}
-D^2:(\tilde{A}\tilde{u}_0) + \nabla\cdot (\tilde{b}\tilde{u}_0) = \nabla\cdot F \quad\text{in } Y,\qquad \tilde{u}_0\text{ is $Y$-periodic},\qquad \int_Y \tilde{u}_0 = 0,
\end{align} 
which indeed has a unique solution, as summarized in the following theorem. 

\begin{theorem}[Well-posedness of \eqref{q problem inhom ren}]\label{Thm: wellp of qinhom ren}
Let $(A,b)\in \calB$ and $F\in L^2_{\mathrm{per}}(Y;\R^n)$. Let $(\tilde{A},\tilde{b})$ be the pair of renormalized coefficients given by \eqref{gam}, \eqref{At Bt}, and let $\tilde{r}\in L^2_{\mathrm{per}}(Y)$ denote the unique solution to \eqref{rt prob}. Further, let $B_2:H^1_{\mathrm{per},0}(Y;\R^n)\times H^1_{\mathrm{per},0}(Y;\R^n)\rightarrow \R$ denote the bilinear form from Lemma \ref{Lmm: char of rt}. Then, the following assertions hold.
\begin{itemize}
\item[(i)] There exists a unique $\tilde{\rho}_0\in H^1_{\mathrm{per},0}(Y;\R^n)$ such that $B_2(\tilde{\rho}_0,w) = -\int_Y F\cdot w$ for all $w\in H^1_{\mathrm{per},0}(Y;\R^n)$.
\item[(ii)] The function $\tilde{u}_0:=-\nabla\cdot \tilde{\rho}_0$ is the unique solution in $L^2_{\mathrm{per}}(Y)$ to \eqref{qt0 problem}.
\item[(iii)] A function $\tilde{u}\in L^2_{\mathrm{per}}(Y)$ is a solution to \eqref{q problem inhom ren} if and only if $\tilde{u} = \tilde{u}_0 + c\tilde{r}$ for some constant $c\in \R$.
\end{itemize}
\end{theorem}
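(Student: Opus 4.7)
The plan is to treat (i) as a routine Lax--Milgram exercise, (ii) as a compatibility calculation linking the defining equation for $\tilde{\rho}_0$ to the very weak formulation of the FPK problem via the symmetry of $D^2\fhi$, and (iii) as an immediate consequence of (ii) together with Theorem \ref{Thm: ana in set B}.

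For part (i), I would apply the Lax--Milgram theorem on $H^1_{\mathrm{per},0}(Y;\R^n)$. Boundedness and coercivity of $B_2$ are already established in Lemma \ref{Lmm: char of rt}, and the linear functional $w\mapsto -\int_Y F\cdot w$ is bounded on $H^1_{\mathrm{per},0}(Y;\R^n)$ by the Cauchy--Schwarz inequality together with the Poincar\'{e} inequality \eqref{Poin}. So no work beyond citing Lemma \ref{Lmm: char of rt} is needed.

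For part (ii), the key observation is that for any $\fhi\in H^2_{\mathrm{per}}(Y)$ one has $\nabla\fhi\in H^1_{\mathrm{per},0}(Y;\R^n)$ (periodicity forces $\int_Y\nabla\fhi=0$), and because $D^2\fhi$ is symmetric the antisymmetric-gradient term in $B_2$ vanishes. Hence
\begin{align*}
B_2(\tilde{\rho}_0,\nabla\fhi)=\int_Y (\nabla\cdot\tilde{\rho}_0)\bigl(\tilde{A}:D^2\fhi+\tilde{b}\cdot\nabla\fhi\bigr)=-\int_Y \tilde{u}_0\bigl(\tilde{A}:D^2\fhi+\tilde{b}\cdot\nabla\fhi\bigr).
\end{align*}
On the other hand, applying the equation from (i) with $w=\nabla\fhi$ gives $B_2(\tilde{\rho}_0,\nabla\fhi)=-\int_Y F\cdot\nabla\fhi$. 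Equating these two expressions yields the very weak form of \eqref{qt0 problem} for every $\fhi\in H^2_{\mathrm{per}}(Y)$. The zero-mean condition $\int_Y\tilde{u}_0=0$ follows from the divergence theorem applied to the periodic field $\tilde{\rho}_0$. For uniqueness, if $\tilde{u}_0^{(1)},\tilde{u}_0^{(2)}\in L^2_{\mathrm{per}}(Y)$ are two solutions to \eqref{qt0 problem}, then their difference $v:=\tilde{u}_0^{(1)}-\tilde{u}_0^{(2)}$ lies in $L^2_{\mathrm{per}}(Y)$, has zero mean, and solves the homogeneous renormalized FPK equation \eqref{q problem renorm}. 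Theorem \ref{Thm: ana in set B}(ii) (together with the proof of (iii) there) forces $v=c\tilde{r}$ for some $c\in\R$; then $\int_Y v=0$ and $\int_Y\tilde{r}=1$ yield $c=0$.

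For part (iii), if $\tilde{u}\in L^2_{\mathrm{per}}(Y)$ solves \eqref{q problem inhom ren}, then by linearity $\tilde{u}-\tilde{u}_0$ solves the homogeneous renormalized problem \eqref{q problem renorm}, so by Theorem \ref{Thm: ana in set B}(ii)--(iii) it equals $c\tilde{r}$ for some $c\in\R$. The converse is immediate from linearity. The main obstacle I anticipate is purely bookkeeping: making sure the very weak formulation is properly tested against all of $H^2_{\mathrm{per}}(Y)$, not merely $H^2_{\mathrm{per},0}(Y)$, but this extension is free since constants have vanishing derivatives and $\int_Y\tilde{u}_0=0$ takes care of the remaining degree of freedom. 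Otherwise, every ingredient has already been set up in Section \ref{Sec: setting B}, and the proof is essentially a linear-superposition argument around the homogeneous case.
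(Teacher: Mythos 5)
Your proposal is correct and fills in exactly the argument the paper intends: the authors omit the proof, remarking that it ``follows immediately from the previous discussion and the coercivity of $B_2$,'' and your Lax--Milgram step, the test with $w=\nabla\fhi$ (where the antisymmetric term drops out), and the reduction of uniqueness and part (iii) to Theorem \ref{Thm: ana in set B} are precisely the intended details, mirroring the proof of Lemma \ref{Lmm: char of rt}. No gaps.
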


We omit the proof as the results follow immediately from the previous discussion and the coercivity of $B_2$ on $H^1_{\mathrm{per},0}(Y;\R^n)$. Then, arguing as in Section \ref{Sec: FEM rt}, we obtain the following approximation result, the proof of which is omitted.

\begin{theorem}[Finite element approximation of \eqref{qt0 problem}]
Suppose that the assumptions of Theorem \ref{Thm: wellp of qinhom ren} hold. Further, let $P_h$ be a closed linear subspace of $H^1_{\mathrm{per},0}(Y;\mathbb{R}^n)$. Then, there exists a unique $\tilde{\rho}_h\in P_h$ such that $B_2(\tilde{\rho}_h,w_h) = -\int_Y F\cdot w_h$ for all $w_h\in P_h$, and setting $\tilde{u}_h:=-\nabla\cdot \tilde{\rho}_h\in L^2_{\mathrm{per}}(Y)$, there holds
\begin{align*}
\|\tilde{u}_0-\tilde{u}_h\|_{L^2(Y)} \leq C \inf_{w_h\in P_h}\|D(\tilde{\rho}_0-w_h)\|_{L^2(Y;\R^{n\times n})}
\end{align*}
for some constant $C = C(\lambda,\Lambda,\delta,n)>0$.
\end{theorem}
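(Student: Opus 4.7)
The plan is to apply a standard Galerkin / Céa-type argument, leveraging the boundedness and coercivity of the bilinear form $B_2$ on $H^1_{\mathrm{per},0}(Y;\R^n)$ that were already established in Lemma \ref{Lmm: char of rt}. Since $P_h$ is a closed linear subspace of the Hilbert space $H^1_{\mathrm{per},0}(Y;\R^n)$ (endowed with the norm $w\mapsto \|Dw\|_{L^2(Y;\R^{n\times n})}$), the restricted bilinear form $B_2|_{P_h\times P_h}$ inherits both the coercivity constant $1-\sqrt{1-\kappa}$ and the boundedness constant $M:=1+(\Lambda/\lambda)\sqrt{n}\sqrt{1+n/\pi^2}$. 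Observing that $w_h\mapsto -\int_Y F\cdot w_h$ is a bounded linear functional on $P_h$, the Lax--Milgram theorem yields a unique $\tilde{\rho}_h\in P_h$ satisfying the stated discrete problem.

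Next, I would establish Galerkin orthogonality: since $B_2(\tilde{\rho}_0,w_h)=-\int_Y F\cdot w_h = B_2(\tilde{\rho}_h,w_h)$ for all $w_h\in P_h$ by part (i) of Theorem \ref{Thm: wellp of qinhom ren} and the definition of $\tilde{\rho}_h$, we have
\begin{equation*}
B_2(\tilde{\rho}_0-\tilde{\rho}_h,w_h)=0 \qquad \forall w_h\in P_h.
\end{equation*}
For any $w_h\in P_h$, coercivity of $B_2$ together with Galerkin orthogonality and boundedness give
\begin{equation*}
(1-\sqrt{1-\kappa})\|D(\tilde{\rho}_0-\tilde{\rho}_h)\|_{L^2}^2 \leq B_2(\tilde{\rho}_0-\tilde{\rho}_h,\tilde{\rho}_0-w_h) \leq M\,\|D(\tilde{\rho}_0-\tilde{\rho}_h)\|_{L^2}\|D(\tilde{\rho}_0-w_h)\|_{L^2},
\end{equation*}
whence $\|D(\tilde{\rho}_0-\tilde{\rho}_h)\|_{L^2}\leq \frac{M}{1-\sqrt{1-\kappa}}\|D(\tilde{\rho}_0-w_h)\|_{L^2}$ and, taking the infimum over $w_h\in P_h$, we obtain the Céa estimate.

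Finally, using that $\tilde{u}_0-\tilde{u}_h=-\nabla\cdot(\tilde{\rho}_0-\tilde{\rho}_h)$ and the elementary bound $\|\nabla\cdot v\|_{L^2(Y)}\leq \|Dv\|_{L^2(Y;\R^{n\times n})}$ valid for every $v\in H^1_{\mathrm{per},0}(Y;\R^n)$ (a consequence of \eqref{rel Dw}, which was used already in the proof of Lemma \ref{Lmm: char of rt}), the desired $L^2$-estimate follows with $C=\frac{M}{1-\sqrt{1-\kappa}}$, identical to the constant appearing in Theorem \ref{Thm: FEM rtilde}. There is no real obstacle here: all the analytical work, namely the coercivity estimate derived from the Cordes-type condition via Lemma \ref{Lmm: cons of Cor}, was carried out earlier, so the argument reduces to a textbook Lax--Milgram/Céa pairing plus the trivial inequality bounding the divergence by the full gradient; this is precisely why the authors elected to omit the proof.
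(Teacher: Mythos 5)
Your proof is correct and follows exactly the route the paper intends: the authors omit the proof precisely because it is the same Lax--Milgram/C\'ea argument used for Theorem \ref{Thm: FEM rtilde}, relying on the coercivity and boundedness of $B_2$ from Lemma \ref{Lmm: char of rt} and the bound $\|\nabla\cdot v\|_{L^2(Y)}\leq\|Dv\|_{L^2(Y;\R^{n\times n})}$ from \eqref{rel Dw}, and your constant matches theirs.
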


\section{Numerical Approximation of Effective Diffusion Matrices}\label{Sec: SecAbar}

\subsection{Setting $\calA$}\label{Sec: SecAbar A}

First, let us discuss the case $(A,b)\in \calA$. We denote the invariant measure by $r$, i.e., the unique solution to the periodic FPK problem \eqref{r problem} given by Proposition \ref{Prop: r}. As usual, we write $Y:=(0,1)^n$, and we introduce the notations $\langle \fhi \rangle := \int_Y \fhi r$ and $\fhi^{\eps}:=\fhi(\frac{\cdot}{\eps})$ for any $Y$-periodic function $\fhi$ and $\eps>0$.

\subsubsection{The homogenization result}

When $(A,b)\in \calA$, it is known (see, e.g., \cite{JZ23}) that if the drift $b$ satisfies the centering condition
\begin{align}\label{cent cond}
\langle b \rangle = 0,
\end{align}
then for any bounded domain $\Omega\subset \R^n$ with $\partial\Omega \in C^{1,1}$, $f\in L^2(\Omega)$, and $g\in H^2(\Omega)$, the sequence of solutions $(u_{\eps})_{\eps>0}\in H^2(\Omega)$ to 
\begin{align}\label{ueps prob}
\begin{split}
-A^{\eps}:D^2 u_\eps  - \eps^{-1}\,b^\eps\cdot \nabla u_\eps &= f\quad\text{in }\Omega,\\  u_{\eps} &= g\quad\text{on }\partial\Omega,
\end{split}
\end{align}
converges weakly in $H^1(\Omega)$ as $\eps \rightarrow 0$ to the solution $\overline{u}$ of the homogenized problem
\begin{align}\label{u pro}
\begin{split}
-\overline{A}:D^2 \overline{u} &= f\quad\text{in }\Omega,\\  \overline{u} &= g\quad\text{on }\partial\Omega,
\end{split}
\end{align}
where the effective diffusion matrix $\overline{A}\in \R^{n\times n}$ is the symmetric positive definite matrix given by
\begin{align}\label{def Abar}
\overline{A} := \left\langle[I_n+D\chi]A[I_n + (D\chi)^{\mathrm{T}}]\right\rangle,\qquad \chi = (\chi_1,\dots,\chi_n),
\end{align}
with $\chi_j\in H^1_{\mathrm{per}}(Y)$, $1\leq j\leq n$, denoting the solution to 
\begin{align}\label{chij pro}
-A:D^2 \chi_j - b\cdot\nabla \chi_j = b_j\quad\text{in } Y,\qquad \chi_j\text{ is $Y$-periodic},\qquad \int_{Y} \chi_j = 0,
\end{align}
whose existence and uniqueness are guaranteed by Proposition \ref{Prop: r}.

We make it our goal to approximate the effective diffusion matrix $\overline{A}$, a quantity that only depends on $(A,b)$ and is, in particular,  independent of $\Omega,f,g$.

\subsubsection{Approximation of the effective diffusion matrix}

First, we recall that for $(A,b)\in \calA$, we have that $r\in W^{1,p}_{\mathrm{per}}(Y)$ with $p>n$ as in \eqref{AinW1p}. In particular, $r$ is H\"{o}lder continuous. We also recall from Section \ref{Sec: Hom FPK} that we have constructed a finite element method for the approximation of $r$ in the $H^1(Y)$-norm (see Theorem \ref{Thm: Approx r in set A}) and in the $W^{1,p}(Y)$-norm (assuming additionally that $\mathrm{div}(A)\in L^{\infty}$; see Theorem \ref{Thm: Lp est r}). In particular, by Sobolev embedding, we can produce a sequence of approximations $(r_h)_{h>0}$ with $\|r-r_h\|_{L^\infty(Y)}\rightarrow 0$.

In view of the definition \eqref{def Abar} of the effective diffusion matrix, it is thus sufficient for us to approximate the solution $\chi_j$ to \eqref{chij pro} in the $H^1(Y)$-norm. The key difficulty is that for the dual problem
\begin{align*}
-D^2:(A\phi ) + \nabla\cdot (b\phi ) = z\quad\text{in } Y,\qquad \phi\text{ is $Y$-periodic},\qquad \int_{Y} \phi = 0
\end{align*}
with $z\in L^2_{\mathrm{per},0}(Y)$, we generally cannot expect that $\phi\in H^2_{\mathrm{per}}(Y)$ and hence, a duality argument similar to the one in the proof of Theorem \ref{Thm: Approx r in set A} does not work. E.g., when $n=1$, $b = 0$, and $z(y) = \sin(2\pi y)$, then $\phi(y) = \frac{\sin(2\pi y)+c}{4\pi^2 A(y)}$ for some $c\in \R$. This simple example shows that lack of regularity of $A$ limits the regularity of $\phi$.

To overcome this difficulty, we note that the solution $\chi_j$ to \eqref{chij pro} belongs to $H^2_{\mathrm{per}}(Y)$, and thus we can multiply the equation \eqref{chij pro} by the positive H\"{o}lder continuous function $r$ from Proposition \ref{Prop: r} without changing the set of solutions:
\begin{align*}
-rA:D^2 \chi_j - rb\cdot\nabla \chi_j = rb_j\quad\text{in } Y,\qquad \chi_j\text{ is $Y$-periodic},\qquad \int_{Y} \chi_j = 0.
\end{align*}
Since $(A,b)\in \calA$, introducing $\beta:=rb-\mathrm{div}(rA) \in L^p_{\mathrm{per}}(Y;\R^n)$, $p>n$, we can rewrite the equation in divergence-form as
\begin{align}\label{chij with r}
-\nabla\cdot (rA\nabla \chi_j) - \beta\cdot \nabla \chi_j = rb_j\quad\text{in } Y,\qquad \chi_j\text{ is $Y$-periodic},\qquad \int_{Y} \chi_j = 0.
\end{align}
Now, for $z\in L^2_{\mathrm{per},0}(Y)$, we can rewrite the dual problem
\begin{align}\label{dual with r}
-D^2:(rA\psi) + \nabla\cdot (rb\psi) = z\quad\text{in } Y,\qquad \psi\text{ is $Y$-periodic},\qquad \int_{Y} \psi = 0,
\end{align}
using that $\nabla\cdot \beta = 0$ weakly in $Y$ by definition of $r$ (see Proposition \ref{Prop: r}), as
\begin{align}\label{dual with r again}
-\nabla\cdot(rA\nabla \psi) + \beta\cdot \nabla \psi = z\quad\text{in } Y,\qquad \psi\text{ is $Y$-periodic},\qquad \int_{Y} \psi = 0.
\end{align}
Since $\beta \in L^{p}_{\mathrm{per}}(Y;\R^n)$ with $p>n$, we have that $\psi\in H^2_{\mathrm{per}}(Y)$ and $\|\psi\|_{H^2(Y)}\leq C \|z\|_{L^2(Y)}$ for some constant $C>0$. 

Noting that the underlying variational formulation of \eqref{chij with r} still obeys a G{\aa}rding inequality as well as boundedness, a duality argument similar to the one given in the proof of Theorem \ref{Thm: Approx r in set A} leads to an error bound for the approximation of $\chi_j$ based on \eqref{chij with r} in the case when $r$ is known.

Now, in general, $r$ is not known and we need to incorporate into our numerical method a finite element method for the approximation of $r$ from Section \ref{Sec: FEM r Set A}. We choose piecewise affine finite elements for simplicity. To this end, consider a shape-regular triangulation $\calT_h$ of $\overline{Y}$ into simplices with maximal overall edge-length $h>0$ that is consistent with the requirement of periodicity.

\begin{theorem}[Approximation of $\chi_j$ in setting $\calA$]\label{Thm: chij app}
Let $(A,b)\in \calA$, $j\in \{1,\dots,n\}$, and let $\chi_j\in H^1_{\mathrm{per},0}(Y)$ denote the unique solution to \eqref{chij pro}. Let $R_h$ denote the finite-dimensional subspace of $H^1_{\mathrm{per},0}(Y)$ consisting of continuous $Y$-periodic piecewise affine functions on $\calT_h$ with zero mean over $Y$. Let $(r_h)_{h>0}\subset W^{1,p}_{\mathrm{per}}(Y)$ with $p>n$ as in \eqref{AinW1p} be such that $e_h := \|r-r_h\|_{W^{1,p}(Y)}\rightarrow 0$ as $h\rightarrow 0$, where $r\in H^1_{\mathrm{per}}(Y)$ denotes the unique solution to \eqref{r problem}. Then, for $h>0$ sufficiently small, there exists a unique $\chi_{j,h}\in R_h$ such that 
\begin{align}\label{chijh pro}
\int_Y r_hA\nabla \chi_{j,h}\cdot \nabla v_h - \int_Y v_h (r_h b-\mathrm{div}(r_h A))\cdot \nabla \chi_{j,h} = \int_Y r_h b_j v_h\quad \forall v_h\in  R_h.
\end{align}
Furthermore, the following error bounds hold:
\begin{align}\label{errorbd chijh H1}
\|\chi_j - \chi_{j,h}\|_{H^1(Y)} \lesssim h \|\chi_j\|_{H^2(Y)} + \sqrt{e_h}(1+\|\chi_j\|_{H^2(Y)}),
\end{align}
and
\begin{align}\label{errorbd chijh L2}
\|\chi_j - \chi_{j,h}\|_{L^2(Y)} &\lesssim (h+e_h)\|\chi_j - \chi_{j,h}\|_{H^1(Y)}  + e_h(1+\|\chi_j\|_{H^1(Y)}),
\end{align}
where the constant absorbed in $``\lesssim"$ only depends on $(A,b)$ and $n$.
\end{theorem}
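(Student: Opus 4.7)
The plan is to run the Schatz-type scheme from the proof of Theorem \ref{Thm: Approx r in set A} on the divergence-form reformulation \eqref{chij with r} of the corrector equation, but with the exact weight $r$ replaced by the computable $r_h$; the new work is bookkeeping the perturbation $r\mapsto r_h$. For $s\in W^{1,p}_{\mathrm{per}}(Y)$ with $s\ge s_0>0$, I would introduce the bilinear form
\begin{align*}
a_s(w,v) := \int_Y s A\nabla w\cdot\nabla v - \int_Y v(sb-\mathrm{div}(sA))\cdot\nabla w.
\end{align*}
Using the identity $\mathrm{div}(sA)=A\nabla s + s\,\mathrm{div}(A)\in L^p$, H\"older, the Sobolev embedding $W^{1,p}\hookrightarrow L^\infty$, and Gagliardo--Nirenberg, $a_s$ is bounded and satisfies a G\aa rding inequality $a_s(v,v)\ge c\|v\|_{H^1}^2-C\|v\|_{L^2}^2$ on $H^1_{\mathrm{per},0}(Y)$ with constants depending only on $s_0$, $\|s\|_{W^{1,p}}$, $(A,b)$, $n$. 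Since $\|r-r_h\|_{L^\infty}\lesssim e_h\to 0$ and $r\ge r_0>0$, for $h$ small both $r_h\ge r_0/2$ and $\|r_h\|_{W^{1,p}}\lesssim 1$ hold, so the constants for $a_{r_h}$ are $h$-independent; a direct estimate also gives the perturbation bound $|(a_r-a_{r_h})(w,v)|\lesssim e_h\|w\|_{H^1}\|v\|_{H^1}$.

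The key consistency observation is that $\chi_j\in H^2_{\mathrm{per}}(Y)$, so one may multiply the pointwise identity $-A:D^2\chi_j-b\cdot\nabla\chi_j=b_j$ by $r_h$ and integrate by parts against any $v_h\in R_h$, reproducing the left-hand side $a_{r_h}(\chi_j,v_h)$ and yielding the Galerkin orthogonality
\begin{align*}
a_{r_h}(\chi_j-\chi_{j,h},v_h)=0\qquad\forall v_h\in R_h.
\end{align*}
Uniqueness of $\chi_{j,h}\in R_h$ — and hence existence, by finite dimensionality — will then follow from the Schatz duality trick used in Theorem \ref{Thm: Approx r in set A}: given $z_h\in R_h$ in the kernel of $a_{r_h}(\cdot,\cdot)|_{R_h\times R_h}$, solve the adjoint $a_r(v,\psi)=\int_Y z_h v$, split $a_r=a_{r_h}+(a_r-a_{r_h})$, and combine $\|\psi-I_h\psi\|_{H^1}\lesssim h\|\psi\|_{H^2}$ with the perturbation estimate to obtain $\|z_h\|_{L^2}\lesssim(h+e_h)\|z_h\|_{H^1}$; G\aa rding then forces $z_h=0$ once $h+e_h$ is small. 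Crucially, the $H^2$-regularity $\|\psi\|_{H^2}\lesssim\|z_h\|_{L^2}$ of the adjoint rests on the divergence-free structure of $\beta:=rb-\mathrm{div}(rA)\in L^p_{\mathrm{per}}(Y;\R^n)$, inherited from the FPK equation solved by $r$, which recasts the adjoint as a divergence-form equation with an $L^p$ lower-order term.

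The error bounds follow from the same duality/G\aa rding machinery applied to $e:=\chi_j-\chi_{j,h}$. For \eqref{errorbd chijh L2}, the adjoint computation
\begin{align*}
\|e\|_{L^2}^2=a_r(e,\psi)=(a_r-a_{r_h})(e,\psi)+a_{r_h}(e,\psi-I_h\psi)+a_{r_h}(e,I_h\psi)
\end{align*}
produces $(h+e_h)\|e\|_{H^1}\|e\|_{L^2}$ from the first two terms and the residual $e_h(1+\|\chi_j\|_{H^1})\|e\|_{L^2}$ from the third one (which vanishes under exact $a_{r_h}$-orthogonality but picks up precisely this consistency contribution from the right-hand-side mismatch $\int_Y(r-r_h)b_jv_h$ when the $a_r$-viewpoint is used). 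For \eqref{errorbd chijh H1}, I would apply G\aa rding to $a_{r_h}(e,e)$, invoke orthogonality to get $a_{r_h}(e,e)=a_{r_h}(e,\chi_j-I_h\chi_j)$, use $\|\chi_j-I_h\chi_j\|_{H^1}\lesssim h\|\chi_j\|_{H^2}$, substitute the $L^2$-bound, and absorb; the $\sqrt{e_h}$ factor in the stated estimate emerges when balancing a mixed term of the form $e_h(1+\|\chi_j\|_{H^2})\|e\|_{H^1}$ against $\|e\|_{H^1}^2$ via Young's inequality $ab\le\tfrac12 a^2+\tfrac12 b^2$.

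The technical heart is twofold: first, justifying the integration-by-parts step that produces $a_{r_h}(\chi_j,v_h)=\int_Y r_h b_j v_h$, which requires $p>n$ so that $\mathrm{div}(r_h A)\in L^p$ pairs integrably with $\nabla\chi_j\in L^2$ and $v_h\in L^{2p/(p-2)}\hookleftarrow H^1$; and second, securing the $H^2$-regularity of the adjoint problem for $a_r$ despite $A$ being only H\"older continuous, which crucially exploits the divergence-free character of $\beta$ to recast the adjoint in divergence form. Everything else is careful propagation of the $r\mapsto r_h$ perturbation through the Schatz template.
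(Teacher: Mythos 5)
Your proposal is correct and follows essentially the same route as the paper: a Schatz-type duality argument for the noncoercive $r_h$-weighted divergence-form problem, with G\aa rding, the perturbation bound $\lvert (a_r-a_{r_h})(w,v)\rvert\lesssim e_h\|w\|_{H^1}\|v\|_{H^1}$, and the $H^2$-regularity of the adjoint \eqref{dual with r again} secured by the divergence-free structure of $\beta$. The one place you deviate is the consistency bookkeeping: since $\chi_j\in H^2_{\mathrm{per}}(Y)$, multiplying the strong form by $r_h v_h$ and integrating by parts indeed gives the \emph{exact} identity $a_{r_h}(\chi_j,v_h)=\int_Y r_h b_j v_h$, hence exact Galerkin orthogonality $a_{r_h}(\chi_j-\chi_{j,h},v_h)=0$; the paper instead works from $a_0(\chi_j,\cdot)=l_0(\cdot)$ and estimates the residual terms $[l_0-l_h]$ and $[a_h-a_0](\chi_j,\cdot)$, which is what produces the $e_h(1+\|\chi_j\|_{H^1})$ and $\sqrt{e_h}$ contributions in \eqref{errorbd chijh L2}--\eqref{errorbd chijh H1}. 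Your orthogonality observation is valid and, if pushed consistently rather than reverting to the $a_r$-viewpoint as you do, would actually yield slightly sharper bounds than the stated ones (which it in particular implies), consistent with the superconvergence seen in the paper's numerics.
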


\begin{proof}
We set $\beta_h:=r_h b-\mathrm{div}(r_h A)$ and $\beta:=r b-\mathrm{div}(r A)$. We introduce
\begin{align*}
a_h(q,v)&:=\int_Y r_hA\nabla q\cdot \nabla v - \int_Y v \beta_h\cdot \nabla q,\qquad l_h(v):=\int_Y r_h b_j v,\\
a_0(q,v)&:= \int_Y r A\nabla q\cdot \nabla v - \int_Y v \beta \cdot \nabla q,\qquad\quad l_0(v):=\int_Y r b_j v
\end{align*}
for $q,v\in H^1_{\mathrm{per},0}(Y)$. Then, \eqref{chijh pro} reads $a_h(\chi_{j,h},v_h) = l_h(v_h)$ for all $v_h\in R_h$, and we have from \eqref{chij with r} that $a_0(\chi_{j},v) = l_0(v)$ for all $v\in H^1_{\mathrm{per},0}(Y)$. Using the assumed uniform ellipticity of $A$, positivity of $r$, and the fact that $\beta\in L^p_{\mathrm{per}}(Y;\R^n)$ with $p>n$, it is easily seen that we have the G{\aa}rding inequality
\begin{align*}
a_0(v,v) \geq \frac{\lambda \inf_{\R^n} r}{2} \|v\|_{H^1(Y)}^2 - c_1 \|v\|_{L^2(Y)}^2\qquad \forall v\in H^1_{\mathrm{per},0}(Y)
\end{align*}
for some constant $c_1 > 0$. Using that $\|r_h-r\|_{L^\infty(Y)} + \|\beta_h-\beta\|_{L^p(Y;\R^n)}\lesssim e_h$ and the Sobolev embedding $H^1(Y)\hookrightarrow L^{\frac{2p}{p-2}}(Y)$ as $p>n$, we also have that
\begin{align*}
\lvert a_h(q,v)-a_0(q,v)\rvert \lesssim e_h\|q\|_{H^1(Y)}\|v\|_{H^1(Y)}\qquad \forall q,v\in H^1_{\mathrm{per},0}(Y).
\end{align*}

\textit{Uniqueness of $\chi_{j,h}$:} Suppose that $\chi_{j,h}^{(1)},\chi_{j,h}^{(2)}\in R_h$ are two solutions to \eqref{chijh pro}, and set $z_{h}:=\chi_{j,h}^{(1)} - \chi_{j,h}^{(2)}$. Noting that since $z_h\in R_h\subset H^1_{\mathrm{per},0}(Y)$ there holds $a_h(z_h,v_h) = 0$ for all $v_h\in R_h$, we find that
\begin{align*}
\|z_h\|_{H^1(Y)}^2 &\lesssim a_0(z_h,z_h) + \|z_h\|_{L^2(Y)}^2 \\ &\lesssim  a_h(z_h,z_h) + \|z_h\|_{L^2(Y)}^2 + e_h\|z_h\|_{H^1(Y)}^2  \lesssim \|z_h\|_{L^2(Y)}^2 + e_h\|z_h\|_{H^1(Y)}^2.
\end{align*} 
In particular, as $e_h\rightarrow 0$, we have for $h>0$ sufficiently small that
\begin{align}\label{H1bddbyL2}
\|z_h\|_{H^1(Y)} \lesssim \|z_h\|_{L^2(Y)}.
\end{align}
Let $\psi\in H^1_{\mathrm{per},0}(Y)$ denote the unique solution to the dual problem \eqref{dual with r} with $z = z_h$, or equivalently, \eqref{dual with r again} with $z = z_h$ (existence and uniqueness of $\psi$ follow from Proposition \ref{Prop: FPK inhom A}, noting $(rA,rb)\in \calA$ and $z_h\in L^2_{\mathrm{per},0}(Y)$). Then, in view of the discussion following \eqref{dual with r}, we have that $\psi\in H^2_{\mathrm{per},0}(Y)$ and $\|\psi\|_{H^2(Y)}\lesssim \|z_h\|_{L^2(Y)}$. Denoting the piecewise linear quasi-interpolant of $\psi$ by  $\mathcal{I}_h \psi$, we introduce $\psi_{\mathcal{I}}:=\mathcal{I}_h \psi - \int_Y \mathcal{I}_h \psi = \mathcal{I}_h \psi + \int_Y (\psi-\mathcal{I}_h \psi) \in R_h$. Then, using that $\|\psi-\psi_{\mathcal{I}}\|_{H^1(Y)}\lesssim h \|\psi\|_{H^2(Y)} \lesssim h\|z_h\|_{L^2(Y)}$, we obtain that (recall that $\nabla\cdot \beta = 0$ weakly)
\begin{align*}
\|z_h\|_{L^2(Y)}^2 = a_0(z_h,\psi)  &\lesssim a_h(z_h,\psi) + e_h\|z_h\|_{H^1(Y)}\|\psi\|_{H^1(Y)} \\
&\lesssim a_h(z_h, \psi-\psi_{\mathcal{I}}) + e_h\|z_h\|_{H^1(Y)}\|z_h\|_{L^2(Y)}\\
&\lesssim a_0(z_h, \psi-\psi_{\mathcal{I}}) + e_h\|z_h\|_{H^1(Y)} \|z_h\|_{L^2(Y)} \\
&\lesssim \|z_h\|_{H^1(Y)} \|\psi-\psi_{\mathcal{I}}\|_{H^1(Y)} + e_h\|z_h\|_{H^1(Y)} \|z_h\|_{L^2(Y)} \\
&\lesssim (h +  e_h) \|z_h\|_{H^1(Y)} \|z_h\|_{L^2(Y)}.
\end{align*}
Combining this with \eqref{H1bddbyL2} yields $z_h = 0$ for $h>0$ sufficiently small, i.e., there is at most one solution to \eqref{chijh pro}.

\medskip
\textit{Existence of $\chi_{j,h}$:} As $R_h$ is finite-dimensional, uniqueness implies existence of a solution $\chi_{j,h}\in R_h$ to \eqref{chijh pro}.  

\medskip
\textit{Error bound:} Let $\Psi\in H^1_{\mathrm{per},0}(Y)$ denote the unique solution to the dual problem \eqref{dual with r} (or equivalently \eqref{dual with r again}) with $z := \chi_j-\chi_{j,h}\in H^1_{\mathrm{per},0}(Y)$. Note that $\Psi\in H^2_{\mathrm{per},0}(Y)$ and $\|\Psi\|_{H^2(Y)}\lesssim \|z\|_{L^2(Y)}$. Writing $\Psi_{\mathcal{I}}:=\mathcal{I}_h \Psi - \int_Y \mathcal{I}_h \Psi\in R_h$, we obtain
\begin{align*}
&\|z\|_{L^2(Y)}^2 = a_0(z,\Psi)
\\ &\lesssim a_h(z,\Psi) + e_h\|z\|_{H^1(Y)}\|\Psi\|_{H^1(Y)} \\
&\lesssim a_h(z,\Psi-\Psi_{\mathcal{I}}) + [l_0-l_h](\Psi_{\mathcal{I}}) + [a_h-a_0](\chi_j,\Psi_{\mathcal{I}})+  e_h\|z\|_{H^1(Y)}\|z\|_{L^2(Y)} \\
&\lesssim a_0(z,\Psi-\Psi_{\mathcal{I}}) + [l_0-l_h](\Psi_{\mathcal{I}}) + [a_h-a_0](\chi_j,\Psi_{\mathcal{I}}) +  e_h\|z\|_{H^1(Y)}\|z\|_{L^2(Y)} \\
&\lesssim (h+e_h)\|z\|_{H^1(Y)}\|z\|_{L^2(Y)} + [l_0-l_h](\Psi_{\mathcal{I}}) + [a_h-a_0](\chi_j,\Psi_{\mathcal{I}}) 
\\ &\lesssim (h+e_h)\|z\|_{H^1(Y)}\|z\|_{L^2(Y)}  + e_h\|z\|_{L^2(Y)} + e_h \|\chi_j\|_{H^1(Y)}\|z\|_{L^2(Y)}
\end{align*}
for $h>0$ sufficiently small, which implies \eqref{errorbd chijh L2}. Writing $\chi_{j,\mathcal{I}}:= \mathcal{I}_h \chi_j - \int_Y \mathcal{I}_h \chi_j \in R_h$ and $\eta_h:=\chi_{j,\mathcal{I}} - \chi_{j,h} = z -  (\chi_j - \chi_{j,\mathcal{I}})\in R_h$, we have that
\begin{align*}
&\|z\|_{H^1(Y)}^2 \lesssim a_0(z,z) + \|z\|_{L^2}^2 \\
&\lesssim a_0(z,\chi_j-\chi_{j,\mathcal{I}}) + \|z\|_{L^2}^2 +   a_0(z,\eta_h)\\
&\lesssim h\|\chi_j\|_{H^2}\|z\|_{H^1} + \|z\|_{L^2}^2 + a_h(z,\eta_h) +e_h\|\eta_h\|_{H^1} \|z\|_{H^1} \\
&\lesssim h\|\chi_j\|_{H^2}\|z\|_{H^1} + \|z\|_{L^2}^2 + [l_0-l_h](\eta_h) + [a_h-a_0](\chi_j,\eta_h)  +e_h\|\eta_h\|_{H^1} \|z\|_{H^1} \\
&\lesssim h\|\chi_j\|_{H^2}\|z\|_{H^1} + \|z\|_{L^2}^2  + e_h\|\eta_h\|_{H^1} (1+\|\chi_j\|_{H^1}+\|z\|_{H^1}) \\ 
&\lesssim h\|\chi_j\|_{H^2}\|z\|_{H^1} + \|z\|_{L^2}^2  + e_h(h\|\chi_j\|_{H^2} + \|z\|_{H^1}) (1+\|\chi_j\|_{H^1}+\|z\|_{H^1}) \\
&\lesssim h\|\chi_j\|_{H^2}\|z\|_{H^1} + \|z\|_{L^2}^2  + e_h(1+\|\chi_j\|_{H^2}^2 + \|z\|_{H^1}^2),
\end{align*}
where we wrote $\|\cdot\|_{L^2}=\|\cdot\|_{L^2(Y)}$ and $\|\cdot\|_{H^1}=\|\cdot\|_{H^1(Y)}$. Using \eqref{errorbd chijh L2}, we obtain
\begin{align*}
\|z\|_{H^1(Y)}^2 &\lesssim h^2 \|\chi_j\|_{H^2(Y)}^2 + \|z\|_{L^2(Y)}^2  + e_h(1+\|\chi_j\|_{H^2(Y)}^2) \\
&\lesssim h^2 \|\chi_j\|_{H^2(Y)}^2 + (h+e_h)^2\|z\|_{H^1(Y)}^2  + e_h(1+\|\chi_j\|_{H^2(Y)}^2)
\end{align*}
for $h>0$ sufficiently small. Absorbing the second term on the right-hand side into the left-hand side, we conclude that \eqref{errorbd chijh H1} holds.
\end{proof}

We are now in a situation to state a result concerning the approximation of the effective diffusion matrix.

\begin{corollary}[Approximation of $\overline{A}$ in setting $\calA$]\label{Cor: Abar A}

Suppose that the assumptions of Theorem \ref{Thm: chij app} hold, and let $\overline{A}$ be given by \eqref{def Abar}. Then, by introducing 
\begin{align*}
\overline{A}_h := \int_Y r_h[I_n + D\chi_h]A[I_n + (D\chi_h)^{\mathrm{T}}],\qquad \chi_h := (\chi_{1,h},\dots,\chi_{n,h}), 
\end{align*}
we have for $h>0$ sufficiently small that
\begin{align*}
\left\lvert \overline{A} - \overline{A}_h \right\rvert \lesssim \left(h \|\chi\|_{H^2(Y;\R^n)} + \sqrt{e_h}\left(1+\|\chi\|_{H^2(Y;\R^n)}\right)\right)\left(1+\|D\chi\|_{L^2(Y;\R^{n\times n})}\right), 
\end{align*}
where the constant absorbed in $``\lesssim"$ only depends on $(A,b)$ and $n$.
\end{corollary}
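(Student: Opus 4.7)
The plan is to decompose $\overline{A}-\overline{A}_h$ into three pieces by successively freezing one of the factors $r$, $[I_n+D\chi]$, $[I_n+(D\chi)^{\mathrm{T}}]$ at a time. Adding and subtracting $\int_Y r_h[I_n+D\chi]A[I_n+(D\chi)^{\mathrm{T}}]$ and $\int_Y r_h[I_n+D\chi_h]A[I_n+(D\chi)^{\mathrm{T}}]$, we arrive at $\overline{A}-\overline{A}_h = T_1+T_2+T_3$ with
\begin{align*}
T_1 &:= \int_Y (r-r_h)[I_n+D\chi]A[I_n+(D\chi)^{\mathrm{T}}],\\
T_2 &:= \int_Y r_h[D\chi-D\chi_h]A[I_n+(D\chi)^{\mathrm{T}}],\\
T_3 &:= \int_Y r_h[I_n+D\chi_h]A[D\chi-D\chi_h]^{\mathrm{T}},
\end{align*}
which cleanly separates the error from approximating $r$ (captured by $T_1$) from the error in approximating $\chi$ (captured by $T_2,T_3$).

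For $T_1$, the Sobolev embedding $W^{1,p}_{\mathrm{per}}(Y)\hookrightarrow L^{\infty}_{\mathrm{per}}(Y)$, valid since $p>n$, yields $\|r-r_h\|_{L^{\infty}(Y)}\lesssim e_h$. Applying Cauchy--Schwarz componentwise, using $|A|\leq\sqrt{n}\Lambda$, and the triangle inequality $\|I_n+D\chi\|_{L^2(Y;\R^{n\times n})}\leq \sqrt{n}+\|D\chi\|_{L^2(Y;\R^{n\times n})}$, we obtain
\begin{align*}
|T_1|\lesssim e_h\bigl(1+\|D\chi\|_{L^2(Y;\R^{n\times n})}\bigr)^2\lesssim e_h\bigl(1+\|\chi\|_{H^2(Y;\R^n)}\bigr)\bigl(1+\|D\chi\|_{L^2(Y;\R^{n\times n})}\bigr),
\end{align*}
where the last step uses $\|D\chi\|_{L^2}\leq\|\chi\|_{H^2}$. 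Since $e_h\to 0$, we have $e_h\leq\sqrt{e_h}$ for $h$ small, so the $T_1$-bound fits into the target.

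For $T_2$ and $T_3$, the uniform bound $\|r_h\|_{L^{\infty}(Y)}\leq \|r\|_{L^{\infty}(Y)}+e_h$ (finite for $h$ small) combined with the $H^1$-estimate \eqref{errorbd chijh H1} from Theorem \ref{Thm: chij app} applied to each component $\chi_j$ and summed gives
\begin{align*}
E:=\|\chi-\chi_h\|_{H^1(Y;\R^n)}\lesssim h\|\chi\|_{H^2(Y;\R^n)}+\sqrt{e_h}\bigl(1+\|\chi\|_{H^2(Y;\R^n)}\bigr).
\end{align*}
Cauchy--Schwarz on $T_2$ yields $|T_2|\lesssim E\,(1+\|D\chi\|_{L^2(Y;\R^{n\times n})})$, while for $T_3$ the triangle inequality $\|I_n+D\chi_h\|_{L^2}\leq\|I_n+D\chi\|_{L^2}+E$ yields $|T_3|\lesssim E\,(1+\|D\chi\|_{L^2})+E^2$. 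The main (mild) technical point is the absorption of this quadratic remainder: since $E\to 0$ as $h\to 0$, for $h$ sufficiently small (with threshold depending only on $(A,b)$ and $n$, via $\|\chi\|_{H^2}$) we have $E\leq 1$, so $E^2\leq E\leq E(1+\|D\chi\|_{L^2})$ is absorbed into the linear term. Summing the three bounds gives the claim.
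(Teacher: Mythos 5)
Your proof is correct and follows essentially the same route as the paper's: a telescoping decomposition of $\overline{A}-\overline{A}_h$ into three terms, Cauchy--Schwarz with $\|r-r_h\|_{L^\infty(Y)}\lesssim e_h$ from the Sobolev embedding, and insertion of the $H^1$ bound \eqref{errorbd chijh H1}; the only difference is the order in which the factors are frozen (you swap $r\to r_h$ first, the paper last), which is immaterial. Your explicit absorption of the quadratic term $E^2$ and the step $e_h\leq\sqrt{e_h}$ for small $h$ match what the paper does implicitly.
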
 

\begin{proof}

Writing $d_h:= \|D(\chi-\chi_h)\|_{L^2(Y;\R^{n\times n})}$, we have by the triangle inequality that $\left\lvert \overline{A} - \overline{A}_h \right\rvert \leq T_1 + T_2 + T_3$, where
\begin{align*}
T_1 &:= \left\lvert \left\langle\left[I_n+D\chi \right]A \left(D(\chi-\chi_h)\right)^{\mathrm{T}}\right\rangle \right\rvert \lesssim  d_h\left(1+\|D\chi\|_{L^2(Y;\R^{n\times n})}\right),\\
T_2 &:= \left\lvert \left\langle \left(D(\chi-\chi_h)\right)A \left[I_n + (D\chi_h)^{\mathrm{T}}\right]\right\rangle \right\rvert \lesssim d_h\left(1+\|D\chi_h\|_{L^2(Y;\R^{n\times n})}\right),\\
T_3 &:= \left\lvert \int_Y (r-r_h)\left[I_n+D\chi_h\right] A \left[I_n + (D\chi_h)^{\mathrm{T}}\right]\right\rvert \lesssim e_h \left(1+\|D\chi_h\|_{L^2(Y;\R^{n\times n})}\right)^2.
\end{align*}
We have used that $A\in L^{\infty}(Y;\R^{n\times n})$, $r\in L^\infty(Y)$, and $\|r-r_h\|_{L^{\infty}(Y)}\lesssim e_h$ by Sobolev embedding. By \eqref{errorbd chijh H1}, we deduce for $h>0$ sufficiently small that
\begin{align*}
\left\lvert \overline{A} - \overline{A}_h \right\rvert &\lesssim  \left(d_h + e_h(1+\|D\chi\|_{L^2(Y;\R^{n\times n})})\right)\left(1+\|D\chi\|_{L^2(Y;\R^{n\times n})}\right) \\
&\lesssim \left(h \|\chi\|_{H^2(Y;\R^{n})} + \sqrt{e_h}\left(1+\|\chi\|_{H^2(Y;\R^{n})}\right)\right)\left(1+\|D\chi\|_{L^2(Y;\R^{n\times n})}\right), 
\end{align*}
as required. 
\end{proof}

\subsection{Setting $\calB$}\label{Sec: SecAbar B}

We now discuss the case $(A,b)\in \calB$, and we write $(\tilde{A},\tilde{b})$ to denote the pair of renormalized coefficients defined by \eqref{gam}, \eqref{At Bt}. We denote the unique solution to the periodic FPK problem \eqref{rt prob} by $\tilde{r}$ (given by Theorem \ref{Thm: ana in set B}). Recall from Corollary \ref{Cor: r in setB} that the invariant measure $r$, i.e., the unique solution to \eqref{r prob setB} in $L^2_{\mathrm{per}}(Y)$, is given by
\begin{align}\label{r in terms rt}
r = \frac{1}{\int_Y \gamma \tilde{r}}\gamma \tilde{r}.
\end{align}
We assume the centering condition
\begin{align}\label{cent ag}
\langle b\rangle = 0,
\end{align}
where we use the notation $\langle \fhi \rangle := \int_Y \fhi r$. Let us discuss the well-posedness and finite element approximation of the problem \eqref{chij pro} in this setting. Using techniques similar to Section \ref{Sec: setting B}, we can show the existence and uniqueness of a solution $\chi_j\in H^2_{\mathrm{per},0}(Y)$ to \eqref{chij pro}, as well as construct a simple finite element scheme for its approximation.

\begin{theorem}[Well-posedness and finite element approximation of \eqref{chij pro}]\label{Thm: chij app set B}

Let $(A,b)\in \calB$. Let $(\tilde{A},\tilde{b})$ denote the pair of renormalized coefficients given by \eqref{gam}, \eqref{At Bt}, and suppose that \eqref{cent ag} holds. Further, let $B_1:H^2_{\mathrm{per},0}(Y)\times H^2_{\mathrm{per},0}(Y)\rightarrow \R$ denote the bilinear form defined in \eqref{B1 defn}, and let $B_2:H^1_{\mathrm{per},0}(Y;\R^n)\times H^1_{\mathrm{per},0}(Y;\R^n)\rightarrow \R$ denote the bilinear form from Lemma \ref{Lmm: char of rt}. Then, the following assertions hold.
\begin{itemize}
\item[(i)] There exists a unique $\chi_j\in H^2_{\mathrm{per},0}(Y)$ such that $B_1(\fhi,\chi_j) = -\int_Y \tilde{b}_j\Delta \fhi$ for all $\fhi\in H^2_{\mathrm{per},0}(Y)$. Further, $\chi_j$ is the unique solution to \eqref{chij pro} in $H^2_{\mathrm{per},0}(Y)$.
\item[(ii)] The function $\xi_j:=\nabla \chi_j$ is the unique element $\xi_j\in H^1_{\mathrm{per},0}(Y;\R^n)$ that satisfies $B_2(w,\xi_j) = -\int_Y \tilde{b}_j \nabla\cdot w$ for all $w\in H^1_{\mathrm{per},0}(Y;\R^n)$. Further, for any closed linear subspace $P_h$ of $H^1_{\mathrm{per},0}(Y;\R^n)$, there exists a unique $\xi_{j,h}\in P_h$ such that $B_2(w_h,\xi_{j,h}) = -\int_Y \tilde{b}_j \nabla\cdot w_h$ for all $w_h\in P_h$, and we have the bound 
\begin{align*}
\|\nabla \chi_j - \xi_{j,h}\|_{H^1(Y;\R^n)} \leq C \inf_{w_h\in P_h} \|D(\nabla \chi_j - w_h)\|_{L^2(Y;\R^{n\times n})}
\end{align*}
for some constant $C=C(\lambda,\Lambda,\delta,n)>0$.
\end{itemize}

\end{theorem}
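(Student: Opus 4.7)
The plan is to obtain part (i) by Lax--Milgram applied to the adjoint of $B_1$ followed by a surjectivity-of-Laplacian argument that upgrades the variational identity to a pointwise equation, with the centering condition pinning down a constant of integration; part (ii) will then follow by testing with gradients and exploiting the symmetric/antisymmetric split built into $B_2$. For (i), the bilinear form $(\fhi_1,\fhi_2)\mapsto B_1(\fhi_2,\fhi_1)$ on $H^2_{\mathrm{per},0}(Y)$ inherits boundedness and the coercivity estimate
\begin{equation*}
B_1(\fhi,\fhi)\geq (1-\sqrt{1-\kappa})\|\Delta\fhi\|_{L^2(Y)}^2
\end{equation*}
from the proof of Theorem \ref{Thm: ana in set B}, and the functional $\fhi\mapsto -\int_Y\tilde{b}_j\Delta\fhi$ is bounded since $\tilde{b}_j\in L^\infty_{\mathrm{per}}(Y)$. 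Lax--Milgram therefore yields a unique $\chi_j\in H^2_{\mathrm{per},0}(Y)$ satisfying the stated identity. Rewriting this as $\int_Y\Delta\fhi(\tilde{A}:D^2\chi_j+\tilde{b}\cdot\nabla\chi_j+\tilde{b}_j)=0$ for all $\fhi\in H^2_{\mathrm{per},0}(Y)$ and invoking the fact that $\Delta:H^2_{\mathrm{per},0}(Y)\to L^2_{\mathrm{per},0}(Y)$ is surjective (a standard Fourier argument on the torus), I conclude that $\tilde{A}:D^2\chi_j+\tilde{b}\cdot\nabla\chi_j+\tilde{b}_j$ equals some constant $c\in\R$ almost everywhere.

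The key step is to show $c=0$, and this is where I expect the main obstacle: one must thread the relation between $\gamma$, $\tilde{r}$, and the true invariant measure $r$ so that the centering condition, which is stated in terms of $r$, cleanly cancels the constant in the renormalized equation. Concretely, testing the very weak formulation \eqref{rt prob} of $\tilde{r}$ against $\chi_j\in H^2_{\mathrm{per}}(Y)$ gives $\int_Y\tilde{r}(c-\tilde{b}_j)=0$, so $c=\int_Y\tilde{r}\tilde{b}_j$ (since $\int_Y\tilde{r}=1$). Using $\tilde{b}_j=\gamma b_j$ together with the representation $r=\gamma\tilde{r}/\int_Y\gamma\tilde{r}$ from Corollary \ref{Cor: r in setB}, this reduces to $c=(\int_Y\gamma\tilde{r})\langle b_j\rangle$, which vanishes by the centering assumption \eqref{cent ag}. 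Dividing $\tilde{A}:D^2\chi_j+\tilde{b}\cdot\nabla\chi_j=-\tilde{b}_j$ by the positive function $\gamma$ then recovers \eqref{chij pro}; conversely, any $\chi\in H^2_{\mathrm{per},0}(Y)$ solving \eqref{chij pro} yields, upon multiplication by $\gamma$ and integration against $\Delta\fhi$, the same Lax--Milgram identity, so uniqueness in $H^2_{\mathrm{per},0}(Y)$ follows.

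For (ii), I set $\xi_j:=\nabla\chi_j\in H^1_{\mathrm{per},0}(Y;\R^n)$ (the mean-zero condition holding by periodicity). The decisive observation is that $D\xi_j=D^2\chi_j$ is pointwise symmetric, which annihilates the antisymmetric term in the definition of $B_2(w,\xi_j)$, leaving
\begin{equation*}
B_2(w,\xi_j)=\int_Y(\nabla\cdot w)\bigl(\tilde{A}:D^2\chi_j+\tilde{b}\cdot\nabla\chi_j\bigr)=-\int_Y\tilde{b}_j\,\nabla\cdot w
\end{equation*}
for every $w\in H^1_{\mathrm{per},0}(Y;\R^n)$, by part (i). Since the form $(v,w)\mapsto B_2(w,v)$ inherits boundedness and coercivity on $H^1_{\mathrm{per},0}(Y;\R^n)$ from Lemma \ref{Lmm: char of rt}, Lax--Milgram applied on both the full space and on the closed subspace $P_h$ gives unique $\xi_j$ and $\xi_{j,h}$, and a standard C\'{e}a-type argument using Galerkin orthogonality, combined with the Poincar\'{e} inequality \eqref{Poin} to pass from control of $\|D\cdot\|_{L^2}$ to control of $\|\cdot\|_{H^1}$, yields the claimed error bound.
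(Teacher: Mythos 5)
Your proposal is correct and follows essentially the same route as the paper: Lax--Milgram for the adjoint forms $B_1(\cdot,\chi_j)$ and $B_2(\cdot,\xi_j)$, a duality argument combining the very weak formulation of $\tilde{r}$ with the centering condition (via $r=\gamma\tilde{r}/\int_Y\gamma\tilde{r}$) to pass between the variational identity and \eqref{chij pro}, the symmetry of $D^2\chi_j$ to kill the antisymmetric part of $B_2$, and C\'{e}a's lemma with \eqref{Poin} for the error bound. The only (cosmetic) difference is in the converse direction of (i): the paper tests against an arbitrary $\Phi=c\tilde{r}-\Delta\fhi\in L^2_{\mathrm{per}}(Y)$, whereas you first deduce that the residual is a constant via surjectivity of $\Delta$ onto $L^2_{\mathrm{per},0}(Y)$ and then identify that constant by pairing with $\tilde{r}$ -- the same decomposition, organized in the opposite order.
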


\begin{proof}
(i) By the proof of Theorem \ref{Thm: Approx r in set A}, we know that $B_1$ is a coercive bounded bilinear form on $H^2_{\mathrm{per},0}(Y)$. Hence, the first part of (i) follows from the Lax--Milgram theorem. For the second part of (i), it is clear that if $\chi_j\in H^2_{\mathrm{per},0}(Y)$ solves \eqref{chij pro}, then $B_1(\fhi,\chi_j) = \int_Y \tilde{b}_j(-\Delta \fhi)$ for all $\fhi\in H^2_{\mathrm{per},0}(Y)$. For the converse, suppose $\chi_j\in H^2_{\mathrm{per},0}(Y)$ satisfies $B_1(\fhi,\chi_j) = \int_Y \tilde{b}_j(-\Delta \fhi)$ for all $\fhi\in H^2_{\mathrm{per},0}(Y)$, and let $\Phi\in L^2_{\mathrm{per}}(Y)$. Then, there exists a unique $\fhi\in H^2_{\mathrm{per},0}(Y)$ such that $\Phi = c \tilde{r}-\Delta \fhi $ with $c:= \int_Y \Phi$. In view of \eqref{rt prob} and \eqref{cent ag}, it follows that  
\begin{align*}
\int_Y \left(-\tilde{A}:D^2 \chi_j - \tilde{b}\cdot \nabla \chi_j \right)\Phi  = B_1(\fhi,\chi_j) = \int_Y \tilde{b}_j(-\Delta \fhi) = \int_Y \tilde{b}_j\Phi,
\end{align*}
where we used that $\int_Y \tilde{b}_j \tilde{r}  = \langle b_j\rangle\int_Y \gamma \tilde{r} = 0$. Since $\Phi\in L^2_{\mathrm{per}}(Y)$ was arbitrary and $\gamma > 0$ a.e., we obtain that $\chi_j$ solves \eqref{chij pro}.  

(ii) By (i) and the definition of $B_2$, we have that $\xi_j:=\nabla \chi_j\in H^1_{\mathrm{per},0}(Y;\R^n)$ and $B_2(w,\xi_j) = \int_Y \tilde{b}_j (-\nabla\cdot w)$ for all $w\in H^1_{\mathrm{per},0}(Y;\R^n)$. The uniqueness of $\xi_j$ follows from coercivity of $B_2$ on $H^1_{\mathrm{per},0}(Y;\R^n)$. The existence and uniqueness of $\xi_{j,h}\in P_h$ follows from the Lax--Milgram theorem and the error bound follows from a standard Galerkin orthogonality argument.
\end{proof}

Let us give some comments regarding the homogenization of \eqref{ueps prob} in this weak regularity setting $(A,b)\in \calB$, assuming that $\tilde{r}\in L^\infty_{\mathrm{per}}(Y;(0,\infty))$. Then, scaling equation \eqref{ueps prob} by the invariant measure $r$ from \eqref{r in terms rt} and applying the transformation argument from \cite{AL89,JZ23} yields the problem
\begin{align}\label{Qprob}
\begin{split}
-\nabla\cdot \left(Q^\eps \nabla u_\eps\right)  &= f\quad\text{in }\Omega,\\  u_{\eps} &= g\quad\text{on }\partial\Omega,
\end{split}
\end{align}
where $Q^\eps := Q(\frac{\cdot}{\eps})$ with $Q:= rA + \Psi$, where $\Psi = (\psi_{ij})_{1\leq i,j\leq n}$ with $\psi_{ij} := \partial_i \phi_j - \partial_j \phi_i$ and 
\begin{align*}
-\Delta \phi_j = \nabla\cdot (rAe_j)-rb_j\quad\text{in }Y,\qquad \phi_j\text{ is $Y$-periodic},\qquad \int_{Y} \phi_j = 0.
\end{align*}
If $\phi_j\in W^{1,\infty}_{\mathrm{per}}(Y)$, then for any sufficiently regular $f,g,\partial\Omega$, we have that $Q\in L^\infty_{\mathrm{per}}(Y;\R^{n\times n})$ is uniformly elliptic and there exists a unique solution $u_\eps \in H^2(\Omega)$ to \eqref{Qprob}, which is equivalent to \eqref{ueps prob}. As $\eps\rightarrow 0$, the solution $u_{\eps}$ converges weakly in $H^1(\Omega)$ to the solution $u$ of the homogenized problem \eqref{u pro} with $\overline{A}$ as in \eqref{def Abar}.

\begin{corollary}[Approximation of $\overline{A}$ in setting $\calB$]\label{Cor: Abar B}

Suppose that the assumptions of Theorem \ref{Thm: chij app set B} hold, and let $\xi_h$ denote a finite element approximation of $D\chi$ obtained by Theorem \ref{Thm: chij app set B}(ii) with $\|D\chi - \xi_h\|_{H^1(Y;\R^{n\times n})}\rightarrow 0$, where $\chi = (\chi_1,\dots,\chi_n)$. Let $\tilde{r}$ be given by Lemma \ref{Lmm: char of rt}, and let $\tilde{r}_h$ denote its finite element approximation obtained by Theorem \ref{Thm: FEM rtilde} with $\|\tilde{r}-\tilde{r}_h\|_{L^2(Y)}\rightarrow 0$. Suppose that $n\leq 4$ and let $\overline{A}$ be given by \eqref{def Abar} with $r$ given by \eqref{r in terms rt}. Then, for $h>0$ sufficiently small, by introducing 
\begin{align*}
\overline{A}_h := \int_Y r_h[I_n + \xi_h]A[I_n + \xi_h^{\mathrm{T}}],\qquad r_h:=\frac{1}{\int_Y \gamma \tilde{r}_h}\gamma \tilde{r}_h,
\end{align*}
we have the bound 
\begin{align}\label{Abarcl}
\begin{split}
\left\lvert \overline{A} - \overline{A}_h \right\rvert &\lesssim \|D\chi-\xi_h\|_{H^1(Y;\R^{n\times n})}\left(1+\|D\chi\|_{H^1(Y;\R^{n\times n})}\right) \\ &\qquad+ \|\tilde{r}-\tilde{r}_h\|_{L^2(Y)}\left(1+\|D\chi\|_{H^1(Y;\R^{n\times n})}\right)^2,
\end{split}
\end{align}
where the constant absorbed in $``\lesssim"$ only depends on $(A,b)$ and $n$.
\end{corollary}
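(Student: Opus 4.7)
The plan is to mimic the proof of Corollary \ref{Cor: Abar A} in setting $\calA$, but to deal with the crucial difference that in setting $\calB$ the invariant measure $r$ lies only in $L^2_{\mathrm{per}}(Y)$, not $L^\infty_{\mathrm{per}}(Y)$. Algebraically I will decompose
\begin{align*}
\overline{A}-\overline{A}_h &= \int_Y r\,[I_n+D\chi]A(D\chi-\xi_h)^{\mathrm{T}} + \int_Y r\,(D\chi-\xi_h)A\,[I_n+\xi_h^{\mathrm{T}}] \\ &\qquad+ \int_Y (r-r_h)[I_n+\xi_h]A[I_n+\xi_h^{\mathrm{T}}] =: T_1+T_2+T_3,
\end{align*}
using the identity $[I_n+D\chi]A[I_n+(D\chi)^{\mathrm{T}}]-[I_n+\xi_h]A[I_n+\xi_h^{\mathrm{T}}]=[I_n+D\chi]A(D\chi-\xi_h)^{\mathrm{T}}+(D\chi-\xi_h)A[I_n+\xi_h^{\mathrm{T}}]$.

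To estimate $T_1$ and $T_2$ I will apply the generalized Hölder inequality with exponents $(2,4,\infty,4)$: since $A\in L^\infty_{\mathrm{per}}(Y;\R^{n\times n})$, $r\in L^2_{\mathrm{per}}(Y)$, and $n\leq 4$, the Sobolev embedding $H^1(Y)\hookrightarrow L^4(Y)$ gives
\begin{align*}
|T_1|+|T_2| \lesssim \|r\|_{L^2(Y)}\left(\|I_n+D\chi\|_{L^4}+\|I_n+\xi_h\|_{L^4}\right)\|D\chi-\xi_h\|_{L^4}
\end{align*}
which I bound by $\|D\chi-\xi_h\|_{H^1(Y;\R^{n\times n})}(1+\|D\chi\|_{H^1(Y;\R^{n\times n})})$, absorbing $\|\xi_h\|_{H^1}\leq \|D\chi\|_{H^1}+\|D\chi-\xi_h\|_{H^1}$ using that $\|D\chi-\xi_h\|_{H^1}$ is bounded for $h$ small. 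The constant depends on $\|r\|_{L^2}$ but this is controlled by $(A,b)$ and $n$ via Remark \ref{Rk: prop gam} and Theorem \ref{Thm: ana in set B}. For $T_3$ I apply Cauchy--Schwarz: $|T_3|\lesssim \|r-r_h\|_{L^2(Y)}\|I_n+\xi_h\|_{L^4(Y)}^2\lesssim \|r-r_h\|_{L^2(Y)}(1+\|D\chi\|_{H^1})^2$.

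To convert $\|r-r_h\|_{L^2}$ into $\|\tilde r-\tilde r_h\|_{L^2}$, I write out the quotient
\begin{align*}
r-r_h = \frac{\gamma(\tilde r-\tilde r_h)}{\int_Y\gamma\tilde r_h} + \gamma\tilde r\cdot\frac{\int_Y\gamma(\tilde r_h-\tilde r)}{(\int_Y\gamma\tilde r)(\int_Y\gamma\tilde r_h)},
\end{align*}
use $\gamma\in L^\infty_{\mathrm{per}}(Y)$ with $\gamma\geq \gamma_0>0$ (Remark \ref{Rk: prop gam}), and observe that $\int_Y\gamma\tilde r>0$ since $\tilde r\geq 0$ with $\int_Y\tilde r=1$ and $\gamma>0$ a.e.; by $L^2$-convergence $\tilde r_h\to\tilde r$, the denominator $\int_Y\gamma\tilde r_h$ stays bounded away from zero for small $h$, yielding $\|r-r_h\|_{L^2}\lesssim \|\tilde r-\tilde r_h\|_{L^2}$. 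Combining the three estimates produces \eqref{Abarcl}.

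The main obstacle is precisely the low regularity of $r$: in setting $\calA$ one could simply pull $r\in L^\infty$ out of the integrals, but here the best we have is $r\in L^2$. This forces the rest of the integrand into $L^2$, which is only possible by exploiting the Sobolev embedding $H^1\hookrightarrow L^4$ — exactly the dimensional restriction $n\leq 4$ in the statement. A secondary technicality is verifying the stability of the normalization quotient defining $r_h$ from $\tilde r_h$; this is handled by the elementary denominator argument above, relying on the strict positivity of $\gamma$ from Remark \ref{Rk: prop gam} and the nonnegativity of $\tilde r$ from Theorem \ref{Thm: ana in set B}(ii).
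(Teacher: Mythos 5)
Your proposal is correct and follows essentially the same route as the paper: the same three-term decomposition $T_1+T_2+T_3$, the same use of the generalized H\"{o}lder inequality with the Sobolev embedding $H^1(Y)\hookrightarrow L^4(Y)$ for $n\leq 4$, and the same reduction of $\|r-r_h\|_{L^2(Y)}$ to $\|\tilde{r}-\tilde{r}_h\|_{L^2(Y)}$. The only difference is that you spell out the stability of the normalization quotient (via $\gamma\geq\gamma_0>0$, $\tilde{r}\geq 0$, $\int_Y\tilde{r}=1$), a step the paper asserts without detail.
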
 

\begin{proof}
We begin by noting that $H^1(Y)\hookrightarrow L^4(Y)$ for $n\leq 4$ by Sobolev embedding, and that $\|r-r_h\|_{L^2(Y)}\lesssim \|\tilde{r}-\tilde{r}_h\|_{L^2(Y)}$. By the triangle inequality, we have that $\left\lvert \overline{A} - \overline{A}_h \right\rvert \leq T_1 + T_2 + T_3$, where
\begin{align*}
T_1 &:= \left\lvert \left\langle\left(I_n+D\chi \right)A \left(D\chi-\xi_h\right)^{\mathrm{T}}\right\rangle \right\rvert \lesssim  \|D\chi-\xi_h\|_{H^1(Y;\R^{n\times n})}\left(1+\|D\chi\|_{H^1(Y;\R^{n\times n})}\right),\\
T_2 &:= \left\lvert \left\langle \left(D\chi-\xi_h\right)A \left(I_n + \xi_h^{\mathrm{T}}\right)\right\rangle \right\rvert \lesssim \|D\chi-\xi_h\|_{H^1(Y;\R^{n\times n})}\left(1+\|\xi_h\|_{H^1(Y;\R^{n\times n})}\right),\\
T_3 &:= \left\lvert \int_Y (r-r_h)\left(I_n+\xi_h\right) A \left(I_n + \xi_h^{\mathrm{T}}\right)\right\rvert \lesssim \|\tilde{r}-\tilde{r}_h\|_{L^2(Y)} \left(1+\|\xi_h\|_{H^1(Y;\R^{n\times n})}\right)^2,
\end{align*}
and it follows that \eqref{Abarcl} holds for $h>0$ sufficiently small.
\end{proof}

\section{Numerical Experiments}\label{Sec: NumExp}

We provide one numerical experiment for the setting $(A,b)\in \mathcal{A}$, and one numerical experiment for the setting $(A,b)\in \calB$. Both experiments are for dimension $n = 2$.

\subsection{Setting $\calA$}

We choose $A:\R^2\rightarrow \R^{2\times 2}$ and $b:\R^2\rightarrow \R^2$ to be 
\begin{align}\label{A,b choice exp set A}
\begin{split}
A(y) &:= \begin{pmatrix}
1+\arcsin(\sin^2(\pi y_1)) & \frac{1}{2}\sin(2\pi y_1) \\
\frac{1}{2}\sin(2\pi y_1) & 2+\cos^2(\pi y_1)
\end{pmatrix},\\ b(y)&:=\mathrm{sign}(\sin(2\pi y_1))\begin{pmatrix}
1\\1
\end{pmatrix}
\end{split}
\end{align}
for $y=(y_1,y_2)\in \R^2$, where the choice of $A$ is as in \cite{CSS20}. We note that $(A,b)\in \calA$ since $A\in W^{1,\infty}_{\mathrm{per}}(Y;\R^{2\times 2}_{\mathrm{sym}})$, $b\in L^\infty_{\mathrm{per}}(Y;\R^2)$, and $A$ is uniformly elliptic. 

First, we test the finite element scheme from Theorems \ref{Thm: Approx r in set A} and \ref{Thm: Lp est r} for the approximation of the invariant measure, i.e., the unique solution $r$ to the FPK problem \eqref{r problem}, where we choose $R_h$ to be the space consisting of $Y$-periodic continuous piecewise affine functions with zero mean over $Y$ on a periodic shape-regular triangulation $\calT_h$ of the unit cell into triangles with vertices $(ih,jh)$, $1\leq i,j\leq N = \frac{1}{h}\in \N$. All computations are performed in FreeFem\texttt{++}; see \cite{Hec12}. By introducing
\begin{align*}
K:\R\rightarrow \R,\qquad K(t):= \int_0^{t}\frac{\mathrm{sign}(\sin(2\pi x))}{1+\arcsin(\sin^2(\pi x))}\mathrm{d}x,
\end{align*}
we compare our approximation with the true solution given by
\begin{align*}
r(y)= C_1^{-1}\frac{\mathrm{e}^{K(y_1)}}{1+\arcsin(\sin^2(\pi y_1))},\qquad C_1:= \int_0^1 \frac{\mathrm{e}^{K(t)}}{1+\arcsin(\sin^2(\pi t))}\mathrm{d}t
\end{align*}
for $y=(y_1,y_2)\in\R^2$; see \cite{JZ23}. 

The approximation errors are shown in Figure \ref{Fig: err curves Set A}(A). For $p\in \{2,3\}$, we observe convergence of order $\calO(h^{\frac{1}{p}})$ in the $W^{1,p}(Y)$-norm and convergence of order $\calO(h^{1+\frac{1}{p}})$ in the $L^p(Y)$-norm, which is consistent with the bounds from Theorems \ref{Thm: Approx r in set A} and \ref{Thm: Lp est r} since $r\in W^{1+s,p}(Y)$ for any $s\in [0,\frac{1}{p})$. Further, we observe superconvergence of order $\calO(h)$ in the $W^{1,p}(Y)$-norm and of order $\calO(h^2)$ in the $L^p(Y)$-norm when there are no elements of $\calT_h$ whose interior intersects the line $\{y_1 = \frac{1}{2}\}$ along which $\partial_1 r$ jumps, which is expected since $\left. r\right\rvert_{Q\times (0,1)}\in H^2(Q\times (0,1))$ for $Q\in \{(0,\frac{1}{2}),(\frac{1}{2},1)\}$.

Since $K(1) = 0$, the centering condition \eqref{cent cond} is satisfied; see \cite{JZ23}.
We now test the finite element scheme from Theorem \ref{Thm: chij app} for the approximation of \eqref{chij pro}. We compare with the true solution given by
\begin{align*}
\chi_j(y) = C_2^{-1}\int_0^{y_1} \mathrm{e}^{-K(t)}\mathrm{d}t-y_1 + c,\qquad C_2:=\int_0^1 \mathrm{e}^{-K(t)}\mathrm{d}t
\end{align*}
for $y=(y_1,y_2)\in \R^2$ and $j\in \{1,2\}$, where $c$ is a constant such that $\int_Y \chi_j = 0$. 

Finally, we test the approximation of the effective diffusion matrix from Corollary \ref{Cor: Abar A}. We compare with the true effective diffusion matrix $\overline{A}\in \R^{2\times 2}_{\mathrm{sym}}$ given by \eqref{def Abar}. The approximation errors are shown in Figure \ref{Fig: err curves Set A}(B). For the approximation of $\chi_j$ we observe convergence of order $\calO(h)$ in the $H^1(Y)$-seminorm, and for the approximation of $\overline{A}$ we observe convergence of order $\calO(h^2)$ in the Frobenius-norm. The results are consistent with, and better than the  behavior expected from the bound in Corollary \ref{Cor: Abar A}.

\begin{figure}
	\begin{subfigure}{0.49\textwidth}
		\includegraphics[width=\textwidth]{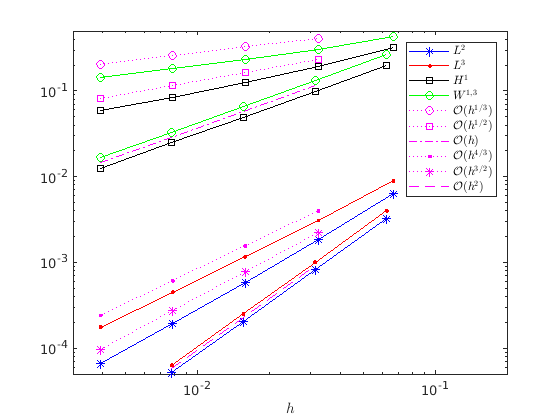}
		\subcaption{$\|r-r_h\|_{X(Y)}$ for $X\in\{ L^2,L^3,H^1,W^{1,3}\}$.}
	\end{subfigure}
	\begin{subfigure}{0.49\textwidth}
		\includegraphics[width=\textwidth]{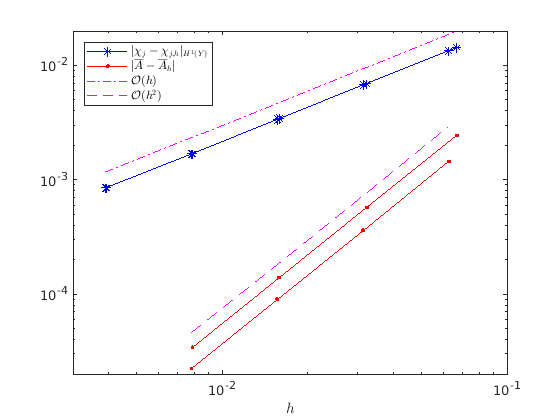}
		\subcaption{$\lvert \chi_j-\chi_{j,h}\rvert_{H^1(Y)}$ and $\lvert \overline{A}-\overline{A}_h\rvert$.}
	\end{subfigure}
	\caption{Approximation errors for the approximation of the invariant measure $r$ and the effective diffusion matrix $\overline{A}$ corresponding to $(A,b)\in \calA$ defined in \eqref{A,b choice exp set A}. We observe two curves, corresponding to whether or not there are elements of the triangulation whose interior
intersects the line $\{y_1 = \frac{1}{2}\}$ along which $\partial_1 r$ exhibits a jump.}
	\label{Fig: err curves Set A}
\end{figure}

\subsection{Setting $\calB$}

We choose $A:\R^2\rightarrow \R^{2\times 2}$ and $b:\R^2\rightarrow \R^2$ to be 
\begin{align}\label{A,b choice exp set B}
\begin{split}
A(y) &:= \begin{pmatrix}
2+\mathrm{sign}(\cos(\pi y_1))\sin(\pi y_1) & \frac{1}{2}\sin(2\pi y_1) \\
\frac{1}{2}\sin(2\pi y_1) & 2+\cos^2(\pi y_1)
\end{pmatrix},\\ b(y)&:= (\tfrac{1}{4}+\tfrac{3}{4}\mathrm{sign}(\sin(2\pi y_1)))\begin{pmatrix}
1\\1
\end{pmatrix}
\end{split}
\end{align}
for $y=(y_1,y_2)\in \R^2$. Note that $(A,b)\in \calB$ since $A\in L^{\infty}_{\mathrm{per}}(Y;\R^{2\times 2}_{\mathrm{sym}})$, $b\in L^\infty_{\mathrm{per}}(Y;\R^2)$, $A$ is uniformly elliptic, and the Cordes-type condition \eqref{mod Cordes} is satisfied  with, e.g., $\delta = \frac{1}{4} \in ( \frac{2}{2+\pi^2},1]$. 

First, we test the finite element scheme from Theorem \ref{Thm: FEM rtilde} in conjunction with \eqref{r in terms rt} for the approximation of the unique solution $r$ to the  FPK problem \eqref{r prob setB}, where we choose $P_h \subset H^1_{\mathrm{per},0}(Y;\R^2)$ to be the space consisting of vector-valued functions whose components are $Y$-periodic continuous piecewise affine functions with zero mean over $Y$ on a periodic shape-regular triangulation $\calT_h$ of the unit cell into triangles with vertices $(ih,jh)$, $1\leq i,j\leq N = \frac{1}{h}\in \N$. By introducing
\begin{align*}
\tilde{K}:\R\rightarrow \R,\qquad \tilde{K}(t):= \int_0^{t}\frac{\frac{1}{4}+\frac{3}{4}\mathrm{sign}(\sin(2\pi x))}{2+\mathrm{sign}(\cos(\pi x))\sin(\pi x)}\mathrm{d}x,
\end{align*}
we compare our approximation with the true solution given by
\begin{align*}
r(y)= \tilde{C}_1^{-1}\frac{\mathrm{e}^{\tilde{K}(y_1)}}{2+\mathrm{sign}(\cos(\pi y_1))\sin(\pi y_1)},\qquad \tilde{C}_1:= \int_0^1 \frac{\mathrm{e}^{\tilde{K}(t)}}{2+\mathrm{sign}(\cos(\pi t))\sin(\pi t)}\mathrm{d}t
\end{align*}
for $y=(y_1,y_2)\in\R^2$; see \cite{JZ23}. The approximation error in the $L^2(Y)$-norm is shown in Figure \ref{Fig: err curves Set B}(A). We observe convergence of order $\calO(\sqrt{h})$, and superconvergence of order $\calO(h)$ when there are no elements of $\calT_h$ whose interior intersects the line $\{y_1 = \frac{1}{2}\}$ along which $r$ jumps. This indicates that the function $\rho$ from Theorem \ref{Thm: FEM rtilde} belongs to $H^{1+s}(Y;\R^n)$ for any $s<\frac{1}{2}$, which is expected since $r\in H^s(Y)$ for any  $s<\frac{1}{2}$. Note also that $\left. r\right\rvert_{Q\times (0,1)}\in H^1(Q\times (0,1))$ for $Q\in \{(0,\frac{1}{2}),(\frac{1}{2},1)\}$.

Since $\tilde{K}(1) = 0$, the centering condition \eqref{cent ag} is satisfied; see \cite{JZ23}. We now test the finite element scheme from Theorem \ref{Thm: chij app set B} for the approximation of \eqref{chij pro}. We compare with the true solution given by
\begin{align*}
\chi_j(y) = \tilde{C}_2^{-1}\int_0^{y_1} \mathrm{e}^{-\tilde{K}(t)}\mathrm{d}t-y_1 + \tilde{c},\qquad \tilde{C}_2:=\int_0^1 \mathrm{e}^{-\tilde{K}(t)}\mathrm{d}t
\end{align*}
for $y=(y_1,y_2)\in \R^2$ and $j\in \{1,2\}$, where $\tilde{c}$ is a constant such that $\int_Y \chi_j = 0$. 

Finally, we test the approximation of the effective diffusion matrix from Corollary \ref{Cor: Abar B}. We compare with the true effective diffusion matrix $\overline{A}\in \R^{2\times 2}_{\mathrm{sym}}$ given by \eqref{def Abar}. The approximation errors are shown in Figure \ref{Fig: err curves Set B}(B). For the approximation of $\nabla\chi_j$ in the $H^1(Y;\R^2)$-norm we observe convergence of order $\calO(\sqrt{h})$, and superconvergence of order $\calO(h)$ when there are no elements of $\calT_h$ whose interior intersects the line $\{y_1 = \frac{1}{2}\}$. For the approximation of $\overline{A}$ in the Frobenius-norm, we observe convergence of order $\calO(h)$, and superconvergence of order $\calO(h^2)$ when there are no elements of $\calT_h$ whose interior intersects the line $\{y_1 = \frac{1}{2}\}$. The results are consistent with, and actually better than the expected behavior from the bound in Corollary \ref{Cor: Abar B}.

\begin{figure}
	\begin{subfigure}{0.49\textwidth}
		\includegraphics[width=\textwidth]{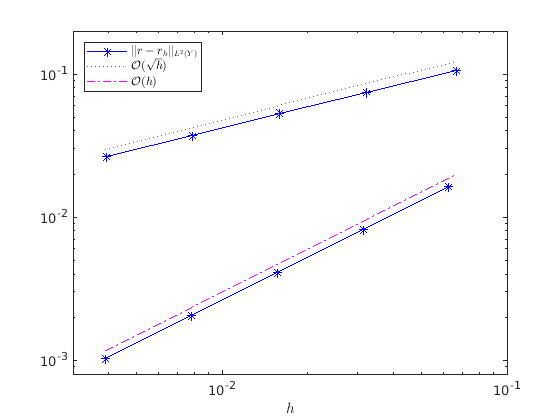}
		\subcaption{$\|r-r_h\|_{L^2(Y)}$.}
	\end{subfigure}
	\begin{subfigure}{0.49\textwidth}
		\includegraphics[width=\textwidth]{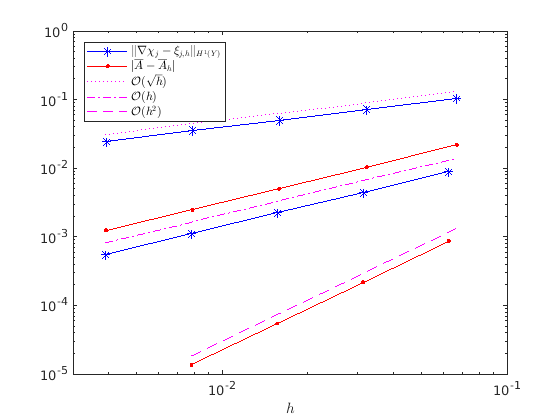}
		\subcaption{$\| \nabla \chi_j-\xi_{j,h}\|_{H^1(Y)}$ and $\lvert \overline{A}-\overline{A}_h\rvert$.}
	\end{subfigure}
	\caption{Approximation errors for the approximation of the invariant measure $r$ and the effective diffusion matrix $\overline{A}$ corresponding to $(A,b)\in \calB$ defined in \eqref{A,b choice exp set B}. We observe two curves, corresponding to whether or not there are elements of the triangulation whose interior
intersects the line $\{y_1 = \frac{1}{2}\}$ along which $r$ exhibits a jump.}
	\label{Fig: err curves Set B}
\end{figure}

\section*{Acknowledgments}

The work of ZZ is partially supported by the National Natural Science Foundation of China  (Project 12171406), the Hong Kong RGC grant (Projects 17307921 and 17304324), and Seed Funding for Basic Research at HKU.

\bibliographystyle{plain}
\bibliography{ref_nondivhom_large_drift_FEM}

\end{document}